\theoremstyle{plain}
\newtheorem{theorem}{Theorem}[section]
\newtheorem{lem}[theorem]{Lemma}
\newtheorem{prop}[theorem]{Proposition}
\newtheorem{cor}[theorem]{Corollary}
\theoremstyle{remark}
\newtheorem{defn}[theorem]{Definition}
\newtheorem{example}{Example}[section]
\newtheorem{rem}[example]{Remark}
\def\cref@thmoptarg[#1]#2#3#4{%
    \ifhmode\unskip\unskip\par\fi%
    \normalfont%
    \trivlist%
    \let\thmheadnl\relax%
    \let\thm@swap\@gobble%
    \thm@notefont{\fontseries\mddefault\upshape}%
    \thm@headpunct{.}% add period after heading
    \thm@headsep 5\p@ plus\p@ minus\p@\relax%
    \thm@space@setup%
    #2% style overrides
    \@topsep \thm@preskip               % used by thm head
    \@topsepadd \thm@postskip           % used by \@endparenv
    \def\@tempa{#3}\ifx\@empty\@tempa%
      \def\@tempa{\@oparg{\@begintheorem{#4}{}}[]}%
    \else%
      \refstepcounter[#1]{#3}%  <<< cleveref modification
      \@namedef{cref@#3@alias}{#1}% added
      \def\@tempa{\@oparg{\@begintheorem{#4}{\csname the#3\endcsname}}[]}%
    \fi%
    \@tempa}%
\DeclareMathOperator{\argmin}{argmin}
\begin{document}

\begin{frontmatter}
\title{Principal curves in metric spaces and the space of probability measures}
\runtitle{Principal curves in metric spaces and W2 space}

\begin{aug}
\author[A]{\fnms{Andrew}~\snm{Warren}\ead[label=e1]{awarren@math.ubc.ca}\orcid{0000-0001-9369-9620}}
\author[A]{\fnms{Anton}~\snm{Afanassiev}\ead[label=e2]{anton.a@math.ubc.ca}\orcid{0000-0003-2387-8783}}
\author[A]{\fnms{Forest}~\snm{Kobayashi}\ead[label=e3]{fkobayashi@math.ubc.ca}\orcid{0000-0002-0625-8243}}
\author[A]{\fnms{Young-Heon}~\snm{Kim}\ead[label=e4]{yhkim@math.ubc.ca}\orcid{0000-0001-6920-603X}}
\and
\author[A]{\fnms{Geoffrey}~\snm{Schiebinger}\ead[label=e5]{geoff@math.ubc.ca}\orcid{0000-0002-8290-7997}}

\address[A]{Department of Mathematics,
University of British Columbia\printead[presep={,\ }]{e1,e2,e3,e4,e5}}
\end{aug}

\begin{abstract}
  We introduce principal curves in Wasserstein space, and in general
  compact metric spaces. Our motivation for the Wasserstein case comes
  from optimal-transport-based trajectory inference, where a
  developing population of cells traces out a curve in Wasserstein
  space. Our framework enables new experimental procedures for
  collecting high-density time-courses of developing populations of
  cells: time-points can be processed in parallel (making it easier to
  collect more time-points). However, then the time of collection is
  unknown, and must be recovered by solving a {\em seriation problem}
  (or one-dimensional manifold learning problem).

  We propose an estimator based on Wasserstein principal curves, and
  prove it is consistent for recovering a curve of probability
  measures in Wasserstein space from empirical samples. This
  consistency theorem is obtained via a series of results regarding
  principal curves in compact metric spaces. In particular, we
  establish the validity of certain numerical discretization schemes
  for principal curves, which is a new result even in the Euclidean
  setting.
\end{abstract}

\begin{keyword}[class=MSC]
\kwd[Primary ]{62G05, 49Q20}
\kwd[; secondary ]{62P10, 62R20}
\end{keyword}

\begin{keyword}
\kwd{Principal curves}
\kwd{optimal transport}
\kwd{unsupervised learning}
\kwd{seriation}
\kwd{manifold learning}
\kwd{trajectory inference}
\end{keyword}

\end{frontmatter}
\tableofcontents{}

\section{Introduction}
\label{sec:intro}

Principal components analysis (PCA) is one of the most basic and
widely used tools in exploratory data analysis and unsupervised
learning. Principal curves~\cite{hastie1989} provide a natural,
nonlinear generalization of principal components (Figure
\ref{fig:intro}).

Motivated by an application to single cell RNA-sequencing (scRNA-seq),
we develop a theory of principal curves in general compact metric
spaces. For this application, we consider principal curves in the
space of probability measures over a compact space of cell states
(e.g. gene expression space, which can be modeled with the
$d$-dimensional simplex). This space of probability measures over cell
states is itself a compact metric space when equipped with the
Wasserstein metric, given by optimal transport theory
\cite{ambrosio2013user, villani2008optimal}. Indeed, a population of
cells can be thought of as a probability distribution over cell
states, and as the population changes over time (e.g. in embryonic
development), it traces out a curve in the space of probability
distributions~\cite{lavenant2024towards,schiebinger2021opinion}.

Biologists collect empirical distributions along this developmental
curve by profiling with single-cell RNA-sequencing, which provides
high-dimensional measurements of cell states for thousands to millions
of
cells~\cite{schiebinger2019optimal,lavenant2024towards,schiebinger2021opinion}.
However, it is currently practically difficult to collect large
numbers of time-points along the curve, because each time-point is
typically processed manually, in series (e.g. several separate mouse
embryos are fertilized, and each is allowed to develop for a specific
amount of time before cells are profiled with scRNA-seq). The benefit
of this careful, separate processing of each developmental time-point
allows experimentalists to record the time-of-collection for each
time-point. However, it would be possible to process far more embryos
if one could {\em infer} the time-of-collection, because this would
allow embryos to be processed in parallel, e.g. profile an entire cup
of fly embryos, each at a different stage of development -- note that
the cells from same embryo come from the same developmental
time-point. The resulting data would consist of a (unordered) set of
empirical distributions, along a developmental curve. By fitting a
principal curve to these empirical distributions, one can order these
embryos in developmental time.

More precisely, let $\rho_{t}$ denote a continuous\footnote{Here
  continuity is with respect to convergence in distribution.} curve of
probability measures on a compact domain, parameterized by time $t\in
[0,1].$ Suppose we observe empirical distributions $\hat
\rho_{t_1},\ldots,\hat \rho_{t_N}$, constructed from i.i.d. samples
from $\rho_{t_n}$, where each $t_n$ is an i.i.d. random time in
$[0,1]$, as illustrated in Figure \ref{fig:intro}. However, the
temporal labels $t_1,\ldots,t_N$ are unknown, and our goal is to
recover them. We can approach this problem by fitting a principal
curve to infer $\rho_{t}$, in the space of probability measures,
equipped with the \emph{Wasserstein distance }$W_{2}$ which metrizes
convergence in distribution.

Inferring the curve $\rho_t$ can also be thought of as a
measure-valued \emph{manifold learning }problem: we want to learn a
one-dimensional continuous manifold in the space of probability
measures equipped with the $W_{2}$ distance.

\subsection{Contributions and overview}

We introduce the principal curve problem in general compact metric
spaces, motivated by an application of principal curves in the
Wasserstein space of probability measures. Given a compact metric
space $(X,d)$ and a probability distribution $\Lambda$ on $X$ (called
the {\em data distribution}), we seek a curve $\gamma^\star : [0,1]
\to X$, which satisfies the following two criteria: the distribution
$\Lambda$ is close to $\gamma^\star$, and the curve $\gamma^\star$ is
not ``too long.'' We measure the fit of $\gamma$ to $\Lambda$ by the
average squared distance of points to $\gamma$, namely
\[
  \text{Fit}(\Lambda,\gamma) \coloneqq
  \int_{X}\inf_{t\in[0,1]}d^{2}(x,\gamma_{t})d\Lambda(x).
\]
A principal curve $\gamma^\star$ passing through $\Lambda$ is then
defined via
\begin{equation}
  \underset{\gamma}{\text{minimize }} \quad \text{Fit}(\Lambda,
  \gamma) + \beta\text{Length}(\gamma), \label{eq:ppc-metric-intro}
\end{equation}
where $\beta >0$ determines the level of length penalization. The
minimization runs over all sufficiently regular curves, in a sense we
make precise in Section \ref{sec:basics}.

\begin{figure}[H]
  \centering
  \begin{subfigure}[t]{1.0\linewidth}
    \centering
    \includegraphics{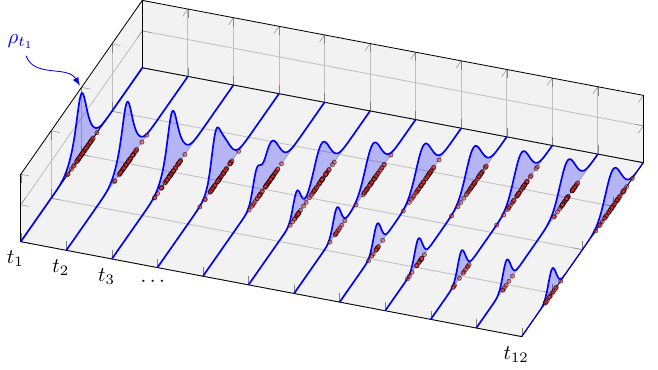}
    \caption{}
  \end{subfigure}
  \begin{subfigure}[t]{1.0\linewidth}
    \centering
    \includegraphics{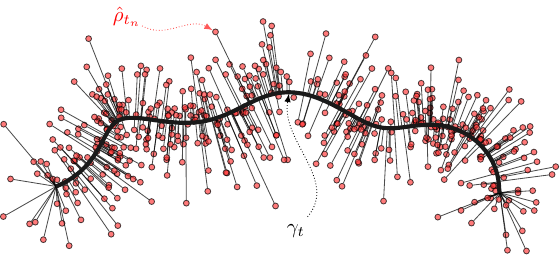}
    \caption{}
  \end{subfigure}
  \caption{(a) An illustration showing ${\color{blue} \rho_t}$ (blue)
    at evenly-spaced time samples $t_1, \ldots, t_N$, with the samples
    comprising $\color{red} \hat \rho_{t_n}$ shown in red. (b) An
    illustration of a principal curve $\gamma_t$ (black curve)
    ``fitting'' the empirical measures $\color{red} \hat \rho_{t_n}$,
    each of which is represented by a single red dot. Note that the
    curve achieves low average projection distance, while the curve
    itself is not ``too long''. The straight lines connecting
    individual data points to their projections along the curve
    represent length-minimizing geodesics in the Wasserstein space.}
    \label{fig:intro}
\end{figure}

One special case of interest is the Wasserstein space of probability
measures over a compact space $V$:
\[
  X = \mathcal{P}(V), \quad \text{and} \quad d(\mu,\nu) =
  W_2(\mu,\nu).
\]
Here, a natural choice for the data distribution $\Lambda$ is one
concentrated around a curve of measures $\rho.$ In practice, one may
observe data in the form of empirical distributions $\hat
\rho_{t_1},\ldots,\hat \rho_{t_T}$ along the curve $\rho$, as
described above in Section \ref{sec:intro}. In this case, the data
distribution is the following empirical {measure}:
\begin{equation}
\hat{\Lambda} \coloneqq
\frac{1}{N}\sum_{n=1}^N\delta_{\hat{\rho}_{t_n}}.
\label{eq:double-empirical-measure-intro}
\end{equation}

In this finite-data setting, the principal curve
problem~\eqref{eq:ppc-metric-intro} can be discretized by replacing
$\Lambda$ with the empirical approximation $\hat{\Lambda}$, and
discretizing the Length penalty, as follows:
\begin{equation}
  \underset{\gamma_1,\ldots,\gamma_K \in \mathcal{P}(V)}{\text{minimize}} \quad \frac{1}{N}\sum_{n=1}^N\inf_{k\in[1,K]}W_2^{2}(\hat{\rho}_{t_n},\gamma_{k})+\beta\sum_{k=1}^{K-1}{W_2}(\gamma_{t_k},\gamma_{t_{k+1}}).\label{eq:ppc-w2-discrete-intro}
\end{equation}
In Section~\ref{subsec:discrete-to-continuum}, we describe a coupled
Lloyd’s algorithm for this discretized objective. Implemented via
existing Wasserstein barycenter solvers, we show it can generate
reasonable results in practice.

Minimizers of the discrete problem (\ref{eq:ppc-w2-discrete-intro})
can be thought of as estimators for the true curve of measures $\rho$.
One of our main results is that these estimators are consistent.
\begin{theorem}[Consistency of Wasserstein principal curves]
\label{thm:main-intro}
Let $V$ be a compact, convex domain. Suppose we are given data from a
ground truth curve $\rho_t:[0,1]\rightarrow \mathcal{P}(V)$ in the
form of an empirical measure $\hat{\Lambda}$ as in equation
(\ref{eq:double-empirical-measure-intro}), with $M$ samples for each
empirical distribution $\hat \rho_{t_1},\ldots,\hat\rho_{t_N}$.
Assuming that $\rho_t$ is injective and sufficiently
regular\footnote{It suffices to assume $\rho_t$ is Lipschitz as a
  function from $[0,1]$ to $\mathcal{P}(V)$ equipped with the $W_2$
  metric.}, then with probability 1,
\[
\rho_t=\lim_{\beta\rightarrow 0}
\lim_{N,M,K \to \infty}\underset{\gamma_1,\ldots,\gamma_K}{\argmin}
\left[\frac{1}{N}\sum_{n=1}^T\inf_{k\in[1,K]}W_2^{2}(\hat{\rho}_{t_n},\gamma_{k})+\beta\sum_{k=1}^{K-1}{W_2}(\gamma_{t_k},\gamma_{t_{k+1}})\right].
\]
\end{theorem}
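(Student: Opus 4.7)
The plan is a two-stage $\Gamma$-convergence argument matching the iterated limit in the theorem. Write $\Lambda_\star$ for the pushforward of the sampling law of the $t_n$ on $[0,1]$ under the map $t\mapsto\rho_t$, so that $\Lambda_\star\in\mathcal{P}(\mathcal{P}(V))$ is supported on the image arc of $\rho$. Let $F_\beta(\gamma)=\text{Fit}(\Lambda_\star,\gamma)+\beta\,\text{Length}(\gamma)$ denote the continuum principal-curve functional, and $F_\beta^{N,M,K}$ its discrete counterpart from the statement. Stage 1 fixes $\beta>0$ and sends $N,M,K\to\infty$; Stage 2 then sends $\beta\to 0$.

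For Stage 1, I would establish almost-sure $\Gamma$-convergence $F_\beta^{N,M,K}\to F_\beta$ together with equicoercivity, which together imply convergence of minimizers. The data-side convergence combines: (i) the Varadarajan-type convergence $W_2(\hat{\rho}_{t_n},\rho_{t_n})\to 0$ almost surely as $M\to\infty$, made uniform in $n$ by a Borel--Cantelli argument exploiting the Lipschitz hypothesis on $\rho$; and (ii) the empirical convergence $\hat{\Lambda}\to\Lambda_\star$ as $N\to\infty$ from a law of large numbers in $\mathcal{P}(\mathcal{P}(V))$. The curve-side convergence uses the numerical discretization validity advertised in the abstract: when $\gamma_1,\ldots,\gamma_K$ are taken as samples along a Lipschitz curve $\gamma$, the polygonal length $\sum_k W_2(\gamma_k,\gamma_{k+1})$ converges to $\text{Length}(\gamma)$. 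Equicoercivity follows from the a priori bound $\text{Length}(\gamma)\leq C/\beta$ on minimizers; via Arzela--Ascoli applied to arclength parametrizations valued in the compact space $\mathcal{P}(V)$, this yields subsequential uniform limits.

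For Stage 2, let $\gamma^\beta$ minimize $F_\beta$. Since $\rho$ is itself Lipschitz with $\text{Fit}(\Lambda_\star,\rho)=0$, it is a feasible competitor, so $\text{Length}(\gamma^\beta)\leq\text{Length}(\rho)$ and $\text{Fit}(\Lambda_\star,\gamma^\beta)\leq\beta\,\text{Length}(\rho)\to 0$. Any Arzela--Ascoli limit $\gamma^0$ is thus a Lipschitz curve whose image contains $\text{supp}(\Lambda_\star)$ and whose length is at most $\text{Length}(\rho)$. Because $\rho$ is injective, $\text{supp}(\Lambda_\star)$ is a simple topological arc, and a rectifiable curve that covers this arc with total length no greater than its intrinsic arclength must be a monotone reparametrization of $\rho$ (up to orientation reversal), which gives the claim.

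The main obstacle I anticipate is Stage 1, specifically the coupling of the three discretization parameters: the law of large numbers for $\hat{\Lambda}$ lives in $\mathcal{P}(\mathcal{P}(V))$ and must combine gracefully with the plug-in error $W_2(\hat{\rho}_{t_n},\rho_{t_n})$, so one must either require $M\to\infty$ fast enough relative to $N$ or strengthen the concentration bound to hold uniformly across all $N$ empirical inputs simultaneously. A secondary delicacy in Stage 2 is ruling out multiply-traversing or ``space-filling'' limits of $\gamma^\beta$; this is where the sharp length comparison $\text{Length}(\gamma^0)\leq\text{Length}(\rho)$ combined with the simple-arc structure of $\text{supp}(\Lambda_\star)$ is doing the real work, and will likely invoke one of the earlier metric-space principal curve lemmas to pin down the parametrization rather than merely the image.
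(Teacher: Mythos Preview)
Your two-stage plan matches the paper's exactly: Stage~1 is the combination of Theorem~\ref{thm:discrete-to-continuum} (discrete-to-continuum, proved via the same $\Gamma$-convergence-plus-equicoercivity structure you describe) and Theorem~\ref{thm:iterated-glivenko-cantelli} (iterated Glivenko--Cantelli for the doubly empirical measure); Stage~2 is Theorem~\ref{thm:seriation-consistency}, whose proof follows the competitor comparison $\text{Length}(\gamma^\beta)\leq\text{Length}(\rho)$ and length-versus-Hausdorff-measure argument you sketch, with the parametrization pinned down by Lemma~\ref{lem:arclen-most-efficient}.

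One correction on a point you yourself flag as the main obstacle: the Lipschitz hypothesis on $\rho$ does \emph{not} help with the Stage~1 uniformity you need. The rate at which $\hat{\rho}_{t_n}$ converges to $\rho_{t_n}$ in $W_2$ depends on intrinsic properties of the individual measure $\rho_{t_n}$ (its effective dimension, for instance), not on how $\rho_t$ varies with $t$; the Lipschitz constant of the curve is irrelevant here. The paper resolves this differently: rather than metrizing $\mathcal{P}(V)$ with $W_2$ for the convergence argument, it uses a maximum-mean-discrepancy distance from a bounded-kernel RKHS (Appendix~\ref{sec:rkhs}), which gives a dimension-free $O(M^{-1/2})$ rate and a sub-Gaussian tail uniformly over \emph{all} source measures on $V$. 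That uniform bound is what makes the Borel--Cantelli step go through under the mild growth condition $M\gtrsim(\log N)^q$. Your instinct that a uniform concentration bound is required is exactly right; only the proposed source of that uniformity is misplaced.
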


We make precise the sense in which we recover the ground truth in the
limit in Section~\ref{sec:seriation}; but the idea is that in the
limit, our minimizing $K$-tuple $\{\gamma^*_{t_k}\}_{k=1}^K$ from
(\ref{eq:ppc-w2-discrete-intro}) converges to a curve $\gamma^*_t$
with the \emph{same range} as $\rho_t$. Then if $\rho_t$ is injective,
this allows us to infer the ordering of the time labels along $\rho_t$
up to reversing the whole ordering.

In fact, we establish a more general version of Theorem
\ref{thm:main-intro} which allows for measurement noise typically
encountered in single cell RNA sequencing (scRNA-seq). In practice,
scRNA-seq provides an imperfect snap-shot of each cell's expression
profile by sampling from the distribution of messenger RNAs within the
cells. After profiling many cells at various time-points along a
developmental curve $\rho$, the resulting data would consist of {\em
  noisy} empirical distributions $\tilde \rho_{t_1}, \ldots, \tilde
\rho_{t_N},$ defined as
\begin{equation}
  \tilde \rho_{t_n} = \frac{1}{M} \sum_{m=1}^{M}
  \delta_{\hat v_m},
\label{eq:noisy_empirical}
\end{equation}
where $\hat v_m \sim \text{Multinomial}(v_m,R_m)$ is the noisy
expression profile when we sequence $R_m$ \lq reads\rq~from a cell
with expression profile $v_{m}\sim \rho_{t_n}$ ~\cite{kim2024optimal}.
A version of Theorem \ref{thm:main-intro} also holds with noisy
empirical distributions $\tilde \rho_{t_n}$, as long as $R_m \to
\infty$ uniformly.

We derive Theorem \ref{thm:main-intro} by way of a series of results
for the more general problem (\ref{eq:ppc-metric-intro}) of principal
curves on compact metric spaces, which we now summarize. In Section
\ref{sec:basics}, we prove existence of minimizers for
(\ref{eq:ppc-metric-intro}) (Proposition \ref{prop:mins-exist}), as
well as stability of minimizers with respect to the underlying
distribution $\Lambda$ (Proposition \ref{prop:gamma-m-gamma}). We then
establish the consistency of a discretized variational problem
generalizing (\ref{eq:ppc-w2-discrete-intro}), meaning minimizers
converge to minimizers of the continuum problem
(\ref{eq:ppc-metric-intro}) (Theorem \ref{thm:discrete-to-continuum}).
This discrete problem enjoys a \textquotedbl coupled Lloyd's
algorithm\textquotedbl{} type numerical scheme (Algorithm
\ref{alg:coupled-lloyd}); in the case of measure-valued data where
$X=\mathcal{P}(V)$ and $d=W_2$, this scheme can be feasibly
implemented via existing Wasserstein barycenter solvers. We also
consider a variant numerical scheme which incorporates nonlocal
adaptive kernel smoothing, and which offers better performance in our
experiments; we likewise prove its consistency with respect to the
continuum problem (\ref{eq:ppc-metric-intro}), in Proposition
\ref{prop:discrete-nonlocal-to-continuum}.

In Section \ref{sec:Wasserstein}, we consider special features of
problem (\ref{eq:ppc-metric-intro}) in the case of where
$X=\mathcal{P}(V)$ and $d=W_2$, and so $\Lambda$ is a distribution
over probability distributions. In particular, we establish an
\emph{iterated Glivenko-Cantelli theorem} for doubly empirical
measures over compact metric spaces (Theorem
\ref{thm:iterated-glivenko-cantelli}). This theorem establishes the
convergence of measures such as the $\hat{\Lambda}$ from
(\ref{eq:double-empirical-measure-intro}) to the limiting distribution
over distributions $\Lambda$, in a sense which allows us to apply the
discrete-to-continuum convergence results from Section
\ref{sec:basics}. We further extend this to a triple-iterated
Glivenko-Cantelli theorem to handle the practical setting of finite
reads in single-cell RNA-sequencing~\cite{kim2024optimal,
  zhang2020determining}. This triple-iterated Glivenko-Cantelli result
is stated in Proposition~\ref{prop:finite-reads}.

In Section \ref{sec:seriation}, we apply principal curves to the
problems of inferring curves and missing temporal labels from data,
mentioned above. {Theorem \ref{thm:seriation-consistency}, our third
  main theorem, establishes that in the case where the support of
  $\Lambda$ coincides with the range of an injective curve $\rho_t$ in
  a compact metric space $X$, as we send the regularization parameter
  $\beta$ to 0, we recover $\rho_t$ (up to monotone or
  reverse-monotone time reparametrization).} We show in Proposition
\ref{prop:pseudotimes-consistency} that this means we can recover the
\emph{ordering} of the $\rho_t$'s (up to total reversal), and so our
principal curves machinery can be used as a \emph{seriation}
algorithm.

We test the performance of our numerical scheme in Section
\ref{sec:experiments} by fitting principal curves to several simulated
datasets and comparing the accuracy of the ordering derived from the
principal curve to that of existing seriation algorithms. We find that
our approach to principal curves is competitive with existing methods,
even while simultaneously providing an estimate of the latent
one-dimensional structure of the data, something not provided by
widely used seriation methods like that of
\cite{atkins_spectral_1998}. We anticipate that this framework could
be useful for generating high-density time-courses with single-embryo,
single-cell RNA-sequencing~\cite{Mittnenzweig2021single}.

\subsection{Related work}

Our study of principal curves in the space of probability measures is
a contribution to a growing body of work which transposes widely used
learning algorithms to the setting of measure-valued data, including:
regression \cite{karimi2021regression, chen2023wasserstein,
  pmlr-v31-poczos13a, ghodrati2022distribution, lavenant2024towards,
  chizat2022trajectory, schiebinger2019optimal}, spline interpolation
\cite{benamou2019second, chen2018measure, chewi2021fast,
  justiniano2023approximation}, clustering
\cite{zhuang2022wasserstein, rao2020wasserstein,
  verdinelli2019hybrid}, and manifold learning
\cite{hamm2023manifold}. In particular, there has been some work on
principal component analysis in Wasserstein space
\cite{bigot2017geodesic, seguy2015principal, cazelles2018geodesic,
  karimi2020statistical, wang2013linear}. However, principal curves
have not yet been studied in Wasserstein space.

Moreover, some elements of our results are new even in the setting of
Euclidean space. In particular, the theoretical guarantees for
numerical schemes for principal curves provided in Section
\ref{sec:basics} and Appendix \ref{sec:extensions} appear to be new;
similar schemes have been used widely for decades (including in the R
princurve package), but to our knowledge have only been justified
heuristically. Accordingly, conducting our analysis in a general
metric space setting allows us to handle the cases of Euclidean- and
measure-valued data simultaneously; moreover, it also covers more
abstract principal curves which have been considered previously, such
as those on compact Riemannian manifolds \cite{hauberg2015principal}
and compact subsets of Banach spaces~\cite{smola2001regularized}.

\paragraph*{Comparison to existing work on principal curves}

The study of principal curves was inaugurated in the 1980s by Hastie
and Stuetzle \cite{hastie1984, hastie1989}, who defined a principal
curve for a probability distribution $\Lambda$ on $\mathbb{R}^d$ to be
a critical point of the functional
\[
  \mathrm{PC}(\Lambda)(\gamma) \coloneqq
  \int_{\mathbb{R}^d}\inf_{t\in[0,1]}d^2(x,\gamma_t)d\Lambda(x)
\]
where the argument $\gamma_t:[0,1]\rightarrow \mathbb{R}^d$ ranges
over infinitely differentiable injective curves. Hastie and Stuetzle
offer an appealing interpretation of critical points of
$\mathrm{PC}(\Lambda)$ as curves which ``locally pass through the
middle'' of the distribution $\Lambda$. Nonetheless the functional
$\mathrm{PC}(\Lambda)$ has some undesirable features. Indeed,
$\mathrm{PC}(\Lambda)$ achieves a value of zero if one takes
$\gamma_t$ to be a space-filling curve on the support of $\Lambda$;
while space-filling curves are excluded from the domain of
$\mathrm{PC}(\Lambda)$ by definition, it remains the case that
approximate minimizers will be approximate space-filling curves, and
these do not provide a desired summary of the data distribution in
typical applications. One might hope that some non-minimal critical
points of $\mathrm{PC}(\Lambda)$ are better behaved, but here too
there are obstacles: \cite{duchamp1996extremal} observes that every
critical point of $\mathrm{PC}(\Lambda)$ is a saddle point (rather
than a local minimum), which prevents the use of cross-validation; see
also discussion in \cite{gerber2013principal}. Making a very similar
observation but with different terminology,
\cite{smola2001regularized} notes that computing critical points of
$\mathrm{PC}(\Lambda)$ is an \emph{ill-posed problem}, meaning that
these critical points are unstable with respect to small perturbations
of the distribution $\Lambda$.

Motivated by these technical problems, numerous alternate formulations
have been proposed, including density ridge estimation
\cite{ozertem2011locally, genovese2012geometry,
  genovese2014nonparametric, chen2015asymptotic}, conditional
expectation optimization \cite{gerber2013principal}, local principal
components \cite{delicado2001another}, spline-based methods
\cite{tibshirani1992principal}, and {\em penalized principal curves}:
a wide range of approaches close in spirit to ours, whereby the
functional $\mathrm{PC}(\Lambda)$ is modified with some kind of
\emph{penalty} on the curve $\gamma$. Various approaches to penalized
principal curves include: a hard constraint on the length of $\gamma$
\cite{kegl2000, delattre2020}, or a soft penalty on the length of
$\gamma$~\cite{smola2001regularized,lu2015regularity}, or some
penalization of other attributes like ``curvature'' \cite{Biau2011Oct,
  lu2021average, sandilya2002principal,kobayashi2024monge}.

In this article, we employ the ``soft length penalty'' variant of
principal curves, because it can be directly translated to the general
setting of compact metric spaces, and hence the Wasserstein space of
probability measures. Moreover, the defining variational problem is
stable with respect to perturbations of the data distribution, as we
show in Proposition \ref{prop:gamma-m-gamma}, and the problem can be
naturally discretized, and attacked numerically, even in metric spaces
which have no differentiable structure and no analog of the Lebesgue
measure (so that there is no meaningful notion of ``probability
density'').

\paragraph*{Comparison to related work on seriation}

Seriation, the problem of ordering data given pairwise comparisons, is
related to our principal curve problem. Early theoretical treatments
of the seriation problem are due to Robinson \cite{robinson1951method}
and Kendall \cite{kendall1963statistical, kendall1969incidence}, and
are closely related to the traveling salesman problem
\cite{gertzen2012flinders}. There is another branch of seriation
literature focused on inferring permutations from pairwise comparisons
between objects~\cite{flammarion2019optimal, shah2016stochastically}.
Spectral seriation computes eigenvectors of the Laplacian constructed
from a kernel matrix of pairwise comparisons
\cite{atkins_spectral_1998,giraud2023localization,
  janssen2022reconstruction,
  natik2021consistencyspectralseriation,zendehboodi2023exploring}. In
our principal curve formulation, by contrast, we directly observe the
objects (e.g. the empirical distributions), and we explicitly search
for a curve embedded in a metric space.

\paragraph*{Comparison to related work on single-cell trajectory inference}

The problem of reconstructing a curve of measures $\rho_t$ from
empirical samples $\hat \rho_{t_n}$ has also been studied recently in
the context of {\em trajectory
  inference}~\cite{lavenant2024towards,chizat2022trajectory}. In
trajectory inference, $\rho_t$ represents the curve of marginals of a
stochastic process, and the goal is to recover the underlying
trajectories (or law on paths). Lavenant et
al.~\cite{lavenant2024towards} showed that a convex approach, based on
optimal transport, can recover a stochastic differential equation from
marginal samples, as long as the vector field for the drift has zero
curl ~\cite{lavenant2024towards}. Assuming that the drift is
conservative implies that the curve of marginals uniquely determines
the law on paths of the SDE. The primary difference with the setup
here is that now we are not given time-labels $t_n$ for the empirical
marginals $\hat \rho_{t_n}$ along the curve (Figure \ref{fig:intro}).
Indeed, a corollary of our work is a consistent approach for
trajectory inference, without time-labels.

\section{Principal curves in metric spaces}
\label{sec:basics}

We first give definitions of some key objects used in the setup of our
inference problem.

Let $(X,d)$ be a compact metric space. By \emph{curve} in $X$, we mean
a continuous function $\gamma_{(\cdot)}:[0,1]\rightarrow X$. The
\emph{metric speed} of a curve $\gamma$ at time $t$ is denoted
$|\dot{\gamma}_{t}|$ and is given by the following limit:
\[
  |\dot{\gamma}_{t}| \coloneqq \lim_{s\rightarrow
    t}\frac{d(\gamma_{s},\gamma_{t})}{|s-t|}.
\]
A curve $\gamma$ is said to be \emph{absolutely continuous} (AC)
provided that $|\dot{\gamma}_{t}|$ exists for Lebesgue-almost all
$t\in[0,1]$, and that $|\dot{\gamma}_{t}|$ is an integrable function
in time. In this case we define the \emph{length} of $\gamma$ as
follows:
\[
  \text{Length}_{d}(\gamma) \coloneqq
  \int_{0}^{1}|\dot{\gamma}_{t}|dt.
\]
We note that the length of $\gamma$ is independent of
time-reparametrization, and that a sufficient condition for
$\text{Length}_{d}(\gamma)$ to be finite is that $t\mapsto\gamma_{t}$
is Lipschitz, in which case
$\text{Length}_{d}(\gamma)\leq\text{Lip}_{d}(\gamma)$. We write
$\text{AC}([0,1];X)$ to denote the set of AC curves into $X$.

A metric space $(X,d)$ is said to be a \emph{length space }if, for all
$x,y\in X$, the distance $d(x,y)$ coincides with the infimum of the
length of all AC curves connecting $x$ to $y$, namely
\[
  d(x,y) = \inf_{\gamma\in\text{AC}([0,1];X) : \gamma_{0}=x,
    \gamma_{1}=y} \int_{0}^{1}|\dot{\gamma}_{t}|dt.
\]
Likewise we say that $(X,d)$ is a \emph{geodesic metric space} if this
infimum is attained. In this case a \emph{geodesic curve} $\gamma$
from $x$ to $y$ is one where $d(x,y)=\int_{0}^{1}|\dot{\gamma}|dt$,
and $\gamma_{0}=x$ and $\gamma_{1}=y$.

For further background on curves in metric spaces, we refer the reader
to \cite{ambrosio2004topics, burago2001course}. We collect a number of
basic results concerning curves in metric spaces in Appendix
\ref{appn:cptness}.

We also recall the definition of the \emph{weak* topology} for
probability measures on a compact metric space $X$: here, a sequence
$(\Lambda_n)_{n\in \mathbb{N}}$ of elements of $\mathcal{P}(X)$ is
said to converge weak*ly (a.k.a. converge in distribution) to a
probability measure $\Lambda$ provided that, for every continuous
$\varphi : X \rightarrow \mathbb{R}$, $$\int_X \varphi d\Lambda_n
\underset{n\rightarrow\infty}{\longrightarrow} \int_X \varphi
d\Lambda.$$ In this case we write $\Lambda_n \rightharpoonup^*
\Lambda$. By Prokhorov's theorem, $\mathcal{P}(X)$ is itself a compact
metric space when equipped with the weak* topology.

We first consider existence of minimizers for the penalized version of
the principal curves functional
\[
  \mathrm{PPC}(\Lambda;\beta)(\gamma) \coloneqq \int_X
  d^2(x,\Gamma)d\Lambda(x) + \beta \mathrm{Length}(\gamma)
\]
where $\gamma_{(\cdot)}\in AC([0,1];X)$, $\Lambda\in\mathcal{P}(X)$,
and $\Gamma$ denotes the range of $\gamma$. We shall often drop the
$\beta$ argument and simply write $\mathrm{PPC}(\Lambda)$ for brevity.
We call minimizers of $\mathrm{PPC}(\Lambda;\beta)$ ``principal curves
for $\Lambda$'' with the understanding that there is implicit
dependence on the choice of penalty $\beta>0$. In this section as well
as in Section \ref{sec:Wasserstein}, we allow $\Lambda$ to be a
general probability measure; we then specialize to the case where
$\Lambda$ is induced by a ground truth curve $\rho_t$ in Section
\ref{sec:seriation}.

\begin{prop}[Existence of principal curves]
\label{prop:mins-exist}
Let $(X,d)$ be a compact metric space. Let $\Lambda\in\mathcal{P}(X)$
and let $\beta>0$. Then there exist minimizers for
$\mathrm{PPC}(\Lambda;\beta)$.
\end{prop}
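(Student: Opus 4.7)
I will apply the direct method of the calculus of variations. The value $\inf \mathrm{PPC}(\Lambda;\beta)$ is finite (e.g.\ a constant curve gives $\mathrm{PPC} \leq \mathrm{diam}(X)^2 < \infty$), so we may take a minimizing sequence $\gamma^n \in \mathrm{AC}([0,1];X)$ with $\mathrm{PPC}(\Lambda;\beta)(\gamma^n) \to \inf \mathrm{PPC}(\Lambda;\beta)$. The length penalty term, together with $\beta > 0$, yields a uniform bound $\sup_n \mathrm{Length}_d(\gamma^n) \leq C$. The functional $\mathrm{PPC}(\Lambda;\beta)(\gamma)$ depends on $\gamma$ only through the range $\Gamma$ and the length (both reparametrization-invariant), so I may reparametrize each $\gamma^n$ by constant speed on $[0,1]$. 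The reparametrized curves are then uniformly $C$-Lipschitz.

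Next, since $(X,d)$ is compact and the sequence is equi-Lipschitz, the Arzelà–Ascoli theorem (in the metric-space form; cf.\ Appendix \ref{appn:cptness}) yields a subsequence converging uniformly to some $C$-Lipschitz limit $\gamma^\star$, which in particular lies in $\mathrm{AC}([0,1];X)$. It then remains to establish lower semicontinuity of $\mathrm{PPC}(\Lambda;\beta)$ under uniform convergence. The length functional $\gamma \mapsto \mathrm{Length}_d(\gamma)$ is lower semicontinuous with respect to uniform convergence; this is a standard fact for curves in metric spaces which I will quote from Appendix \ref{appn:cptness}.

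The main step requiring care is the fit term. I claim that if $\gamma^n \to \gamma^\star$ uniformly, then the ranges $\Gamma^n$ converge to $\Gamma^\star$ in the Hausdorff sense. Indeed, every $y = \gamma^\star_t \in \Gamma^\star$ is the uniform limit of $\gamma^n_t \in \Gamma^n$, giving $\Gamma^\star \subseteq \liminf_n \Gamma^n$; conversely, if $\gamma^n_{t_n} \to y$, compactness of $[0,1]$ lets me pass to a subsequence with $t_n \to t$, and then uniform convergence plus continuity give $y = \gamma^\star_t \in \Gamma^\star$, so $\limsup_n \Gamma^n \subseteq \Gamma^\star$. It follows that $d(x,\Gamma^n) \to d(x,\Gamma^\star)$ pointwise in $x$. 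Since $d^2$ is uniformly bounded on the compact space $X$, dominated convergence yields
\[
  \int_X d^2(x,\Gamma^n)\, d\Lambda(x) \;\longrightarrow\; \int_X d^2(x,\Gamma^\star)\, d\Lambda(x),
\]
so in fact the fit term is \emph{continuous}, not merely lower semicontinuous, along this sequence.

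Combining the lower semicontinuity of length with the continuity of the fit term along the subsequence gives $\mathrm{PPC}(\Lambda;\beta)(\gamma^\star) \leq \liminf_n \mathrm{PPC}(\Lambda;\beta)(\gamma^n) = \inf \mathrm{PPC}(\Lambda;\beta)$, so $\gamma^\star$ is a minimizer. I expect the main obstacles to be purely bookkeeping: verifying that constant-speed reparametrization is available for an arbitrary AC curve in a metric space, and invoking the correct metric-space versions of Arzelà–Ascoli and length lower semicontinuity—both of which are standard and presumably already recorded in Appendix \ref{appn:cptness}.
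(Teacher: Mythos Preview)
Your proposal is correct and follows essentially the same direct-method argument as the paper: reparametrize a minimizing sequence to constant speed using the uniform length bound from the penalty, extract a uniformly convergent subsequence via Arzel\`a--Ascoli, and conclude using lower semicontinuity of length together with convergence of the fit term (the paper simply asserts pointwise convergence of $d(x,\Gamma^{n_k})$ from uniform convergence, whereas you route this through Hausdorff convergence of the ranges, which is a slightly more elaborate but equivalent justification).
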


The proof is deferred to Appendix \ref{subsec:proofs-for-basics}. We
note that the argument is close to that of \cite[Lem.
2.2]{lu2016average}, which establishes existence in the case where
$X=\mathbb{R}^d$.

A natural next question is whether we can guarantee solutions are
unique. In many cases the answer is no, and we give an elementary
example for $\mathbb R^2$ in a moment in Proposition
\ref{prop:r2-nonuniqueness}. The idea is that when $\mu$ has
nontrivial symmetries and an optimizer $\gamma$ of
$\mathrm{PPC}(\Lambda)$ is known to be injective, we may simply apply
said symmetries to $\gamma$ to obtain a new optimizer. While the
counterexample to uniqueness we provide here is quite specific, we
also note that the data-fitting term $\int_X d^2(x,\Gamma)d\Lambda(x)$
is neither concave nor convex in $\gamma$ even on Euclidean domains
\cite[§ 2.2.1]{kobayashi2024monge}, so one should not expect the
energy landscape of the functional $\mathrm{PPC}(\Lambda)$ to be
simple.

This failure of uniqueness means that, for a given data distribution
$\Lambda$ (and length penalty $\beta>0$), we must be willing to accept
\emph{each} principal curve for $\Lambda$ as an estimate of
$\Lambda$'s implicit one-dimensional structure. Whether this makes
sense surely depends on $\Lambda$ itself as well as the domain
application. For example, the consistency results of Section
\ref{sec:seriation} apply in the limiting case where $\Lambda$ itself
has one-dimensional support inside $X$.

\begin{prop}[Non-uniqueness of principal curves]
\label{prop:r2-nonuniqueness}
There exists a compact metric space $(X,d)$ and probability measure
$\Lambda\in\mathcal{P}(X)$ for which minimizers of
$\mathrm{PPC}(\Lambda)$ are not unique. In particular we can take $X$
to be the unit ball in $\mathbb{R}^{2}$ and $\Lambda$ the uniform
distribution on an equilateral triangle.
\end{prop}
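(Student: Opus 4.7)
The approach is to exploit the $D_3$ dihedral symmetry of the uniform distribution $\Lambda$ on an equilateral triangle $T \subset X$ centered at the origin $O$. Every element $g$ of $G := D_3$ (the rotations by $0, 2\pi/3, 4\pi/3$ about $O$ and the reflections across the three medians) acts on $X$ by an isometry with $g_\#\Lambda = \Lambda$ and preserves curve-lengths, so
\[
\mathrm{PPC}(\Lambda)(g \circ \gamma) = \mathrm{PPC}(\Lambda)(\gamma) \quad \forall \gamma \in \mathrm{AC}([0,1];X),\; g \in G,
\]
and hence $\gamma \mapsto g \circ \gamma$ maps minimizers to minimizers. To establish non-uniqueness it therefore suffices to exhibit a $\beta > 0$ and a minimizer $\gamma^*$ whose image $\Gamma^* := \gamma^*([0,1])$ is not preserved by the $120^\circ$ rotation $R$ about $O$; then $R \circ \gamma^*$ has image $R\Gamma^* \neq \Gamma^*$ and is a distinct minimizer.

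I would carry this out in two steps. \emph{Step 1: For $\beta$ sufficiently small, no constant curve is optimal.} Taking $\gamma_L$ to be a short segment of length $L$ through $O$ along one of the medians and writing $X_1$ for the coordinate along the median, a direct computation using $d^2(x, \gamma_L) = X_2^2 + (|X_1| - L/2)_+^2$ shows
\[
\mathrm{PPC}(\Lambda)(\gamma_L) - \mathrm{PPC}(\Lambda)(O) = L\bigl(\beta - \mathbb{E}_\Lambda |X_1|\bigr) + O(L^2),
\]
which is strictly negative for all sufficiently small $L > 0$ whenever $\beta < \mathbb{E}_\Lambda |X_1|$; combined with Proposition~\ref{prop:mins-exist}, this forces any minimizer $\gamma^*$ to be non-constant. \emph{Step 2: Exclude the possibility that $\Gamma^*$ is $R$-invariant.} Were $\Gamma^*$ $R$-invariant and containing some $p$ with $d(p,O) = r > 0$, it would also contain the orbit $\{p, Rp, R^2 p\}$, three vertices of an equilateral triangle of circumradius $r$; by connectedness of $\Gamma^*$ (a Peano continuum) and the Steiner-tree inequality, $\mathrm{Length}(\gamma^*) \geq \mathcal{H}^1(\Gamma^*) \geq 3r$. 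Since $O \in \Gamma^*$ as well, a reverse-triangle-inequality argument applied to $d^2$ bounds the fit improvement of such a $\gamma^*$ over the constant curve at $O$ by $2r\,\mathbb{E}_\Lambda[d(\cdot, O)]$; comparing this against the $\mathrm{PPC}$ value of a suitably chosen median segment shows that all $R$-invariant candidates are strictly suboptimal for a suitable range of small $\beta$.

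The main technical hurdle is Step 2: verifying that the Steiner-tree length lower bound and the triangle-inequality fit-improvement upper bound leave a usable window of $\beta > 0$ overlapping the regime of Step 1 in which non-constant minimizers already exist. If those coarse estimates do not cooperate quantitatively, a fallback is a local perturbation/second-variation argument showing directly that near any $R$-invariant candidate one can always produce an explicit symmetry-breaking perturbation which strictly decreases $\mathrm{PPC}(\Lambda)(\cdot)$. Either route yields the same conclusion: for an appropriate $\beta > 0$ there is a minimizer $\gamma^*$ with $\Gamma^* \neq R\Gamma^*$, so the orbit $\{\gamma^*,\, R \circ \gamma^*,\, R^2 \circ \gamma^*\}$ provides (at least) three distinct minimizers, establishing non-uniqueness.
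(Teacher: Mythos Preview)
Your symmetry idea is the same as the paper's, but your Step~2 is where the argument stalls, and the paper sidesteps it entirely. The key input you are missing is that minimizers of $\mathrm{PPC}(\Lambda)$ on planar domains are \emph{injective} (this is \cite[Thm.~1.3]{lu2016average}, cited in the paper). Once you know a minimizer $\gamma^*$ is injective, $\Gamma^*$ is a simple arc and $\gamma^*(0)\neq\gamma^*(1)$ are its only two noncut points. Any isometry of $\mathbb{R}^2$ preserving $\Gamma^*$ must permute $\{\gamma^*(0),\gamma^*(1)\}$, and only two such isometries exist (identity and reflection across the perpendicular bisector). Since $D_3$ has six elements, some $g\in D_3$ fails to fix the endpoint pair, hence $g(\Gamma^*)\neq\Gamma^*$, and $g\circ\gamma^*$ is a distinct minimizer. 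This works for \emph{every} $\beta>0$, not just small $\beta$.

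Your attempted Step~2 has concrete gaps: the assertion ``$O\in\Gamma^*$ as well'' is unjustified (an $R$-invariant connected set need not contain the fixed point---think of a circle centered at $O$), and the quantitative comparison between the Steiner lower bound $3r$ on length and the coarse $2r\,\mathbb{E}_\Lambda[d(\cdot,O)]$ upper bound on fit improvement is never actually carried out; it is not clear these estimates leave any usable window of $\beta$. The fallback second-variation argument is likewise only sketched. In short, the missing idea is injectivity of minimizers: with it, Step~2 becomes a two-line counting argument and Step~1 is unnecessary.
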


The details of the proof are given in Appendix \ref{subsec:proofs-for-basics}.

Next, we investigate questions of \emph{stability}, that is, the
dependency of principal curves on the choice of measure $\Lambda$. As
we do not have uniqueness, our stability result is for the set of
optimizers as a whole.

\begin{prop}[Stability of principal curves with respect to data
  distribution]
  \label{prop:gamma-m-gamma}
  Let $(X,d)$ be a compact metric space. Let $(\Lambda_{N})_{N \in
    \mathbb{N}}$ denote a sequence in $\mathcal{P}(X)$, such that
  $\Lambda_{N} \rightharpoonup^{*} \Lambda$. Then minimizers of
  $\mathrm{PPC}(\Lambda_{N})$ converge to minimizers of
  $\mathrm{PPC}(\Lambda)$ in the following sense.

  If $\gamma^{*N}\in\text{AC}([0,1],X)$ is a minimizer of
  $\mathrm{PPC}(\Lambda_{N})$ which has been reparametrized to have
  constant speed, then there exists a minimizer $\gamma^{*}$ of
  $\mathrm{PPC}(\Lambda)$ such that up to passage to a subsequence,
  \[
    \gamma_{t}^{*N}\rightarrow\gamma_{t}^{*}\text{ uniformly in }t.
  \]
  Moreover, given any subsequence of minimizers $\gamma^{*N}$ of
  $\mathrm{PPC}(\Lambda_{N})$ which converges uniformly in $t$ to some
  $\gamma^{*}\in AC([0,1];X)$, it holds that $\gamma^{*}$ is a
  minimizer of $\mathrm{PPC}(\Lambda)$.
\end{prop}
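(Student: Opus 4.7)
The plan is to run a standard direct-method/$\Gamma$-convergence argument: compactness of the minimizer sequence in $C([0,1];X)$, joint continuity of the data-fitting term along the sequence, lower semicontinuity of length, and a simple recovery-sequence step (since the measure $\Lambda_N$ only enters the data-fitting term).

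First I would establish a uniform Lipschitz bound for the constant-speed minimizers. Competing against any constant curve $\gamma \equiv x_0 \in X$ gives $\mathrm{PPC}(\Lambda_N)(\gamma^{*N}) \leq \mathrm{diam}(X)^2$, so $\mathrm{Length}(\gamma^{*N}) \leq \mathrm{diam}(X)^2/\beta$ uniformly in $N$. Since $\gamma^{*N}$ has constant speed, this gives a uniform Lipschitz bound $\mathrm{Lip}(\gamma^{*N}) \leq \mathrm{diam}(X)^2/\beta$. By compactness of $X$ and Arzel\`a--Ascoli (see the results collected in Appendix \ref{appn:cptness}), along a subsequence $\gamma^{*N} \to \gamma^*$ uniformly in $t$, with $\gamma^* \in \mathrm{AC}([0,1];X)$ inheriting the same Lipschitz bound.

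Next I would prove the two inequalities needed to identify $\gamma^*$ as a minimizer. For the $\liminf$ inequality, uniform convergence of the curves implies Hausdorff convergence of their ranges $\Gamma_N \to \Gamma^*$, hence $d(\cdot,\Gamma_N) \to d(\cdot,\Gamma^*)$ uniformly on $X$; combining this uniform convergence of bounded continuous integrands with the weak* convergence $\Lambda_N \rightharpoonup^* \Lambda$ gives
\[
\int_X d^2(x,\Gamma_N)\,d\Lambda_N(x) \;\longrightarrow\; \int_X d^2(x,\Gamma^*)\,d\Lambda(x).
\]
Combined with the lower semicontinuity of $\mathrm{Length}$ under uniform convergence of curves (a standard fact for metric-space curves, cf.\ Appendix \ref{appn:cptness}), this yields $\mathrm{PPC}(\Lambda)(\gamma^*) \leq \liminf_N \mathrm{PPC}(\Lambda_N)(\gamma^{*N})$. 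For the $\limsup$/recovery inequality, any fixed competitor $\gamma \in \mathrm{AC}([0,1];X)$ has length independent of $N$, and for its fixed range $\Gamma$ the map $x \mapsto d^2(x,\Gamma)$ is continuous and bounded, so weak* convergence alone gives $\mathrm{PPC}(\Lambda_N)(\gamma) \to \mathrm{PPC}(\Lambda)(\gamma)$. By minimality of $\gamma^{*N}$,
\[
\mathrm{PPC}(\Lambda)(\gamma^*) \leq \liminf_N \mathrm{PPC}(\Lambda_N)(\gamma^{*N}) \leq \liminf_N \mathrm{PPC}(\Lambda_N)(\gamma) = \mathrm{PPC}(\Lambda)(\gamma),
\]
which shows $\gamma^*$ is a minimizer and proves the first assertion. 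The second assertion (any uniformly convergent subsequence has a minimizer as limit) follows from the same two displayed inequalities applied to the given subsequence, without reference to how it was extracted.

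The step that requires the most care is the joint convergence of $\int d^2(x,\Gamma_N)\,d\Lambda_N(x)$, since both the integrand and the measure move with $N$; the key observation that unlocks it is that uniform convergence of curves upgrades to uniform convergence of the distance-to-range functions via the trivial estimate $|d(x,\Gamma_N)-d(x,\Gamma^*)| \leq d_H(\Gamma_N,\Gamma^*) \leq \sup_t d(\gamma^{*N}_t,\gamma^*_t)$. With that in hand, combining uniform convergence of a bounded continuous sequence of integrands with weak* convergence of probability measures is routine (e.g.\ split into $\int (f_N - f)\,d\Lambda_N + \int f\,d\Lambda_N$ and control each term separately). Everything else is a direct application of ingredients already invoked in the proof of Proposition \ref{prop:mins-exist}.
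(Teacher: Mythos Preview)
Your proposal is correct and follows essentially the same three-ingredient structure as the paper's proof: a uniform length bound $\mathrm{Length}(\gamma^{*N})\le\mathrm{diam}(X)^2/\beta$ obtained by competing with a constant curve, Arzel\`a--Ascoli to extract a uniformly convergent subsequence, and then a $\liminf$/recovery argument combining continuity of the data-fitting term with lower semicontinuity of length. The only cosmetic difference is that the paper packages the joint convergence $\int d^2(x,\Gamma_N)\,d\Lambda_N \to \int d^2(x,\Gamma^*)\,d\Lambda$ into a separate lemma (Lemma~\ref{lem:continuity-unpenalized-objective}) proved via the Moore--Osgood double-limit theorem, whereas you establish it inline via the Hausdorff-distance estimate $|d(x,\Gamma_N)-d(x,\Gamma^*)|\le d_H(\Gamma_N,\Gamma^*)\le\sup_t d(\gamma^{*N}_t,\gamma^*_t)$ and then split the integral; the underlying computation is the same.
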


See Appendix \ref{subsec:proofs-for-basics} for the proof. We then
obtain the following by way of Varadarajan's version
\cite{varadarajan1958} of the Glivenko-Cantelli theorem:
\begin{cor}
  Let $\Lambda\in\mathcal{P}(X)$, and let $(\Lambda_{N})_{N \in
    \mathbb{N}}$ be a sequence of empirical measures for $\Lambda$.
  Then with probability 1, it holds that minimizers of
  $\mathrm{PPC}(\Lambda_{N})$ converge to minimizers of
  $\mathrm{PPC}(\Lambda)$ in the sense of the previous proposition.
\end{cor}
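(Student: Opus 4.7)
The plan is to reduce the corollary to Proposition \ref{prop:gamma-m-gamma} via a classical convergence result for empirical measures. The essential observation is that the hypothesis of Proposition \ref{prop:gamma-m-gamma} requires only that $\Lambda_N \rightharpoonup^* \Lambda$, so it suffices to establish this convergence almost surely, after which the stability statement transfers to minimizers for free.

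First, I would invoke Varadarajan's theorem \cite{varadarajan1958}: since $(X,d)$ is a compact metric space, it is in particular a separable Borel space, so for i.i.d. samples $X_1, X_2, \ldots \sim \Lambda$ and their empirical measures $\Lambda_N = \frac{1}{N}\sum_{n=1}^N \delta_{X_n}$, there is an event of probability $1$ on which $\Lambda_N \rightharpoonup^* \Lambda$. Fix a sample realization $\omega$ in this full-measure event; from this point onward the argument is entirely deterministic.

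Second, on this event I would apply Proposition \ref{prop:gamma-m-gamma} with the (now deterministic) sequence $\Lambda_N(\omega)$. Existence of at least one minimizer $\gamma^{*N}$ of $\mathrm{PPC}(\Lambda_N(\omega);\beta)$ for each $N$ is guaranteed by Proposition \ref{prop:mins-exist}, so the statement is non-vacuous. Reparametrizing each such $\gamma^{*N}$ to constant speed and invoking Proposition \ref{prop:gamma-m-gamma} yields precompactness in $C([0,1];X)$ together with the identification of every subsequential uniform limit as a minimizer of $\mathrm{PPC}(\Lambda;\beta)$. Since this conclusion holds on a set of probability $1$, the corollary follows.

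I do not foresee any real obstacle: the content is entirely in the two cited results. The only subtlety worth flagging in a formal write-up is that the statement is a set-valued convergence (no unique selection of $\gamma^{*N}$ is required), which sidesteps any measurable-selection concerns that might otherwise arise in converting the deterministic stability result into an almost-sure statement.
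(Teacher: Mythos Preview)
Your proposal is correct and follows exactly the approach the paper indicates: invoke Varadarajan's theorem to obtain $\Lambda_N \rightharpoonup^* \Lambda$ almost surely, then apply Proposition~\ref{prop:gamma-m-gamma} on the full-measure event. The additional remarks you include (existence of minimizers via Proposition~\ref{prop:mins-exist}, and that no measurable selection is needed) are accurate but not required for the argument.
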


\subsection{Discretization and algorithm}
\label{subsec:discrete-to-continuum}

In this subsection, we first formally discretize the principal curves
functional $\mathrm{PPC}(\Lambda)$, and describe a Lloyd-type
algorithm to minimize this discretization. Then, we provide a rigorous
consistency result for our discretization scheme.

We first give the following heuristic computation as motivation. Let
$x_{1},x_{2},\ldots,x_{N}$ be $N$ i.i.d. random elements of $X$ drawn
according to $\Lambda$, and let $\gamma\in AC([0,1];X)$. Fix also
timepoints $0=t_{1}<\ldots<t_{k}<\ldots<t_{K}=1$. Now, for any $1\leq
n\leq N$, observe that if $t_{k}(n)$ is the index of the
$\gamma_{t_{k}}$ which is closest to $x_{n}$ amongst all
$\{\gamma_{t_{k}}\}_{k=1}^{K}$ (where we pick the lowest index in the
event of a tie), then it holds that $d(x_{n},\gamma_{t_{k}(n)})\approx
d(x_{n},\Gamma)$, with equality in the limit where $K \to\infty$.
According to Monte Carlo integration, we expect heuristically that
\[
  \int d^{2}(x,\Gamma)d\Lambda(x)\approx\frac{1}{N}
  \sum_{n=1}^{N}d^{2}(x_{n},\gamma_{t_{k}(n)}).
\]
Now, let
\[
  I_{k} \coloneqq \{n\in N:x_{n}\text{ projects onto
  }\gamma_{t_{k}}\}.
\]
(Where again we pick the lowest index in the case of a tie, for the
``projection'' onto the $\gamma_{t_{k}}$'s.) The sets $I_k$ can be
thought of as ``Voronoi cells'' for the points $x_n$ with respect to
$\{\gamma_{t_{k}}\}_{k=1}^{K}$. Then we can rewrite
\[
  \sum_{n=1}^{N}d^{2}(x_{n},\gamma_{t_{k}(n)}) = \sum_{k=1}^{K}
  \sum_{n\in I_{k}}d^{2}(x_{n},\gamma_{t_{k}}).
\]
At the same time, we can approximate $\text{Length}(\gamma) \approx
\sum_{k=1}^{K-1} d(\gamma_{t_{k}},\gamma_{t_{k+1}}).$ Altogether,
\[
  \int_{X}d^{2}(x,\Gamma)d\Lambda(x) + \beta\text{Length}(\gamma)
  \approx\sum_{k=1}^{K}\sum_{n\in I_{k}} \frac{d^{2}(x_{n},
    \gamma_{t_{k}})}{N} + \beta\sum_{k=1}^{K-1} d(\gamma_{t_{k}},
  \gamma_{t_{k+1}}).
\]
Note again that the ``Voronoi cells'' $I_{k}$ depend on
$\gamma_{t_{k}}$ in this expression.

This discretization is useful to us for two reasons. Firstly, it turns
out that we can prove that minimizers of the discrete objective
$\sum_{k=1}^{K}\sum_{n\in I_{k}} \frac{d^{2}(x_{n},\gamma_{t_{k}})}{N}
+ \beta\sum_{k=1}^{K-1}d(\gamma_{t_{k}},\gamma_{t_{k+1}})$ converge
(in roughly the same sense as in Proposition \ref{prop:gamma-m-gamma})
to minimizers of $\mathrm{PPC}(\Lambda)$ as $N,K\rightarrow \infty$.
Indeed we prove such a discrete-to-continuum convergence result later
in this section, see Theorem \ref{thm:discrete-to-continuum}.

Second, the discrete objective admits a descent algorithm which is
morally similar to Lloyd's algorithm for k-means clustering, except
that the means are ``coupled'' to each other via the global length
penalty $\beta\sum_{k=1}^{K-1}d(\gamma_{t_{k}},\gamma_{t_{k+1}})$.
While (just as for usual k-means clustering) we are unable to prove
global convergence due to the non-convexity of the objective, we
nonetheless observe in our experiments that such a descent procedure
works well in practice.

To wit, we now give a prose description of a``coupled Lloyd's
algorithm'' for the discrete objective. A restatement of the algorithm
in pseudocode is given below as Algorithm \ref{alg:coupled-lloyd}; a
single loop of the algorithm is illustrated below in Figure
\ref{fig:coupled-lloyds}.

\begin{enumerate}
  \item Initialize the ``knots'' $\{\gamma_{k}\}_{k=1}^{K}$. (Line 1
    in Algorithm \ref{alg:coupled-lloyd})
  \item Given some predetermined threshold $\varepsilon>0$, repeat
    steps 3-5 below in a loop, until the value of the discrete
    objective $\sum_{k=1}^{K}\sum_{n\in I_{k}}\frac{d^{2}(x_{n},
    \gamma_{t_{k}})}{N} +
    \beta\sum_{k=1}^{K-1}d(\gamma_{t_{k}},\gamma_{t_{k+1}})$ drops by
    less than $\varepsilon>0$. (Lines 2 and 6)
  \item Permute the indices for the knots $\{\gamma_{k}\}_{k=1}^{K}$
    to minimize the total length $\sum_{k=1}^{K-1}
    d(\gamma_{k+1},\gamma_{k})$. This amounts to solving the traveling
    salesman problem with respect to the matrix of pairwise distances
    between the $\gamma_k$'s. (Line 3)
  \item Given $\{\gamma_{k}\}_{k=1}^{K}$, compute the sets $I_{k}
    \coloneqq \{x_{n}:\gamma_{k}\text{ is the closest knot to
    }x_{n}\}$. Tiebreak lexicographically if necessary. (Line 4)
  \item Given the $I_{k}$'s from step 4, compute a new set of
    $\gamma_{k}$'s by minimizing
    $\sum_{k=1}^{K}\sum_{n\in I_{k}}\frac{d^{2}(x_{n},\gamma_{k})}{N}
    + \beta\sum_{k=1}^{K-1}d(\gamma_{k+1},\gamma_{k}).$ (line 5)
\end{enumerate}
A version of this algorithm on Euclidean domains has been proposed in
\cite{kirov2017multiple}; our version is for general compact metric
spaces for which pairwise distances and barycenters can be effectively
computed.

\begin{algorithm}[H]
  \SetKwComment{Comment}{/* }{ */}
  \caption{Coupled Lloyd's Algorithm for Principal Curves}\label{alg:coupled-lloyd}
  \SetKwInOut{Input}{Input}
  \Input{$\textrm{data }\{x_n\}_{n=1}^N, \textrm{parameters } \beta>0, \varepsilon>0$}

  \SetKwFunction{Initialize}{initialize\_knots}

  $\{\gamma_k\}_{k=1}^K \gets \Initialize{}$\;

  \Repeat{$\varepsilon\textrm{-convergence}$}{
   $\{\gamma_k\}_{k=1}^K \gets
    \texttt{TSP\_ordering}(\{\gamma_k\}_{k=1}^K)$
    \Comment*[r]{min-length ordering}
   $\{I_k\}_{k=1}^K \gets
    \texttt{compute\_Voronoi\_cells}(\{\gamma_k\}_{k=1}^K)$\;
   $\{\gamma_k\}_{k=1}^K \gets
    \argmin_{\{\gamma'_k\}_{k=1}^K }
    \sum_{k=1}^K \sum_{n\in I_k} \frac{d^2(x_n, \gamma_k')}{N} + \beta
    \sum_{k=1}^{K-1} d(\gamma_{k+1}', \gamma_k')$\;
  }

  \KwResult{$\{\gamma_k\}_{k=1}^K$ \Comment*[r]{The updated output knots}}

\end{algorithm}

{
  \def\acaption{A local view of the situation in the discretized case.
    Here, the knots $\color{blue} \{\gamma_k\}_{k=1}^K$ are plotted
    with geodesic interpolations between adjacent points, ordered
    using a TSP solver. Also shown: The data points $\color{red}
    \{x_n\}_{n=1}^N$ and the Voronoi cells (dotted black lines).}
  \def\bcaption{An illustration of how the update step works. The
    position of each knot $\color{blue} \gamma_k$ affects the overall
    objective value via: (1) the average distance from $\color{blue}
    \gamma_k$ to the data points ${\color{red} x_n} \in I_k$, and (2)
    the distance from $\color{blue} \gamma_{k}$ to the adjacent knots
    $\color{blue} \gamma_{k-1}, \gamma_{k+1}$. In that sense, at the
    update step each $\color{blue} \gamma_k$ is ``pulled'' toward the
    points of $I_k$ (vectors drawn in gray) and $\{{\color{blue}
      \gamma_{k-1}, \gamma_{k+1}}\}$ (vectors draw in blue).}
  \def\ccaption{Moving the knot points according a weighted sum of the
    vectors in \cref{fig:vc-vectors}. In accordance with the
    weightings of the terms in the discretized functional in Step 5,
    each vector pointing to a ${\color{red} x_n} \in I_k$ is weighted
    by $1/N$, while the vectors pointing to $\color{blue}
    \gamma_{k-1}, \gamma_{k+1}$ are weighted by $\beta/(2K-2)$ (the
    factor of two arising because each $d^2({\color{blue} \gamma_k},
    {\color{blue} \gamma_{k+1}})$ is split into one vector on
    $\color{blue} \gamma_k$ and one on $\color{blue} \gamma_{k+1}$).}
  \def\dcaption{The updated $\color{blue}\{ \gamma_k \}_{k=1}^K$ with
    the associated updated Voronoi cells.} \def\vcscale{.85}
  \def\vcwidth{.49\linewidth} \def\vcgap{\hspace{0cm}}
  \begin{figure}[H]
    \centering
    \begin{subfigure}[t]{\vcwidth}
      \centering
      \includegraphics[scale=\vcscale]{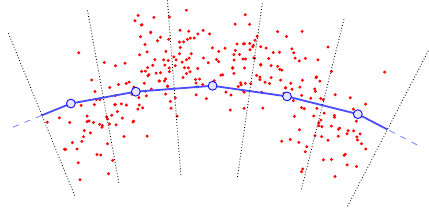}
      \caption{}
    \end{subfigure}
    \vcgap
    \begin{subfigure}[t]{\vcwidth}
      \centering
      \includegraphics[scale=\vcscale]{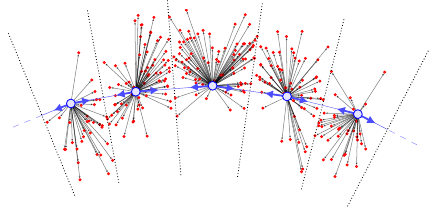}
      \caption{}
      \label{fig:vc-vectors}
    \end{subfigure}
    \begin{subfigure}[t]{\vcwidth}
      \centering
      \includegraphics[scale=\vcscale]{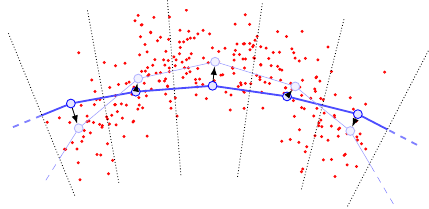}
      \caption{}
    \end{subfigure}
    \vcgap
    \begin{subfigure}[t]{\vcwidth}
      \centering
      \includegraphics[scale=\vcscale]{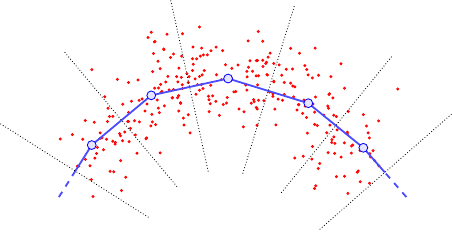}
      \caption{}
    \end{subfigure}
    \caption{A visualization of one loop of the algorithm. (a)
    \acaption\ (b) \bcaption\ (c) \ccaption\ (d) \dcaption}
    \label{fig:coupled-lloyds}
  \end{figure}
}

Let us make several remarks regarding the details of Algorithm
\ref{alg:coupled-lloyd}.

\begin{rem}
  The choice of initialization of the knots $\{\gamma_k\}_{k=1}^K$
  matters for the performance of Algorithm \ref{alg:coupled-lloyd} in
  practice, just like for the usual Lloyd's algorithm for k-means
  clustering. This is because the objective is nonconvex for general
  data distributions. We recommend a ``k-means++'' style
  initialization where the $\{\gamma_k\}_{k=1}^K$'s are first taken to
  be $K$ randomly chosen points among the data $\{x_n\}_{n=1}^N$, as
  this initialization has performed well for us in practice.
\end{rem}

\begin{rem}
  Step 2 of Algorithm \ref{alg:coupled-lloyd}, where one solves the
  traveling salesman problem as a subroutine, is feasible for
  moderately large $K$ (say $K\sim 1000$) thanks to highly efficient
  TSP solvers which are now available, such as
  \cite{applegate2006traveling_concorde}.
\end{rem}

\begin{rem}
  The optimization subroutine in Step 4 of Algorithm
  \ref{alg:coupled-lloyd} is somewhat complicated by the fact that the
  data-fitting term is quadratic while the length penalty is linear.
  Nonetheless, optimization of objectives of this form can be done
  using modern convex optimization methods (e.g split Bregman
  projection), see discussion in \cite{kirov2017multiple,
    tibshirani2005sparsity}.
\end{rem}

\begin{rem}
  In the case where $K=N$, Algorithm \ref{alg:coupled-lloyd} is a
  relaxation of the traveling salesman problem. Indeed, when $K=N$ it
  is possible to make the data-fitting term $\sum_{k=1}^{K}\sum_{n\in
    I_{k}}\frac{d^{2}(x_{n},\gamma_{t_{k}})}{N}$ equal zero by placing
  a $\gamma_k$ directly atop each data point $x_n$. Then, minimizing
  the length penalty (subject to enforcing that the data-fitting term
  is identically zero) amounts to choosing an ordering of the
  $\gamma_k$'s of minimal total length according to the metric $d$. We
  mention in particular the work \cite{polak2007lazy} which studies
  the properties this relaxation of the traveling salesman problem in
  $\mathbb{R}^2$.
\end{rem}

In Section \ref{sec:Wasserstein} below we particularize this descent
algorithm to the case where $(X,d)$ is a Wasserstein space of
probability measures over a compact space; we emphasize that, even in
that setting, the algorithm is readily implemented using existing
numerics for optimal transport problems.

Next we provide a consistency result for the discretization we have
described above. This requires introducing the following objective
functional on $K$-tuples in $X$:
\[
  \text{PPC}^{K}(\Lambda)(\gamma_{1},\ldots,\gamma_{K}) \coloneqq
  \int_{X}d^{2}(x,\Gamma^{K})d\Lambda(x) + \beta\sum_{k=1}^{K-1}
  d(\gamma_{k},\gamma_{k+1})
\]
where $\{\gamma_{k}\}_{k=1}^K$ represents a discrete curve and
$d(x,\Gamma^{K})=\min_{1\leq k\leq K}d(x,\gamma_{k})$. Notice that
$\int_{X}d^{2}(x,\Gamma^{K})d\Lambda(x)=\sum_{k=1}^{K}\sum_{n\in
  I_{k}}\frac{d^{2}(x_{n},\gamma_{k})}{N}$ in the case where $\Lambda$
is uniform on $N$ atoms $\{x_n\}_{n=1}^N$.

By solving the minimization problem $\text{PPC}^{K}(\Lambda)$, we
procure a $K$-tuple $\{\gamma_{1},\ldots,\gamma_{K}\}$ of ``knots''
belonging to $X$, which we think of as a discrete estimator for a
minimizer of the continuum problem $\text{PPC}(\Lambda)$. Note
however, that this discrete estimator can also be associated to a
\emph{piecewise-geodesic interpolation} connecting the knots.
Therefore, the discrete-to-continuum convergence question we address
is actually the following: as we send $K\rightarrow\infty$, does a
given piecewise geodesic interpolation of the minimizing knots
$\{\gamma_{1},\ldots,\gamma_{K}\}$ converge to an $AC$ curve
$\gamma_{t}$ which then minimizes $\text{PPC}(\Lambda)$? We address
this question in the following proposition, which also allows for
simultaneously taking a convergent sequence of data distributions
$(\Lambda_{N})_{N\in\mathbb{N}}$.

\begin{theorem}[Discrete to continuum]
\label{thm:discrete-to-continuum}
Let $X$ be a compact geodesic metric space. Let $K,N\in\mathbb{N}$ and
$\Lambda_{N}\in\mathcal{P}(X)$. Suppose also that
$\Lambda_{N}\rightharpoonup^{*}\Lambda$ where
$\Lambda\in\mathcal{P}(X)$. Let $\{\gamma_{k}^{*K}\}_{k=1}^{K}$ be a
minimizer of $\text{PPC}^{K}(\Lambda_{N})$, and let $\gamma^{*K}_t$ be
a constant-speed piecewise geodesic interpolation of
$\{\gamma_{k}^{*K}\}_{k=1}^{K}$.

Then, up to passage to a subsequence in $K$, we have that: there
exists an AC curve $\gamma^{*}$ such that
$\gamma_{t}^{*K}\rightarrow\gamma_{t}^{*}$ uniformly in $t$, and
$\gamma^{*}$ is a minimizer for $\text{PPC}(\Lambda)$. Moreover, given
any uniformly convergent subsequence of geodesic interpolations
$\gamma^{*K}$ of minimizers $\{\gamma_{k}^{*K}\}_{k=1}^{K}$ with limit
$\gamma^{*}\in AC([0,1];X)$, it holds that $\gamma^{*}$ is a minimizer
of $\text{PPC}(\Lambda)$.
\end{theorem}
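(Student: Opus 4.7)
The plan is to run a Γ-convergence style argument, combining a uniform-in-$K$ compactness bound on the discrete minimizers $\{\gamma_k^{*K}\}_{k=1}^K$ with matching liminf and limsup inequalities relating $\mathrm{PPC}^K(\Lambda_N)$ (evaluated via its piecewise geodesic interpolation) to $\mathrm{PPC}(\Lambda)$ under uniform convergence of curves and weak* convergence $\Lambda_N \rightharpoonup^* \Lambda$. Compactness comes for free from testing $\mathrm{PPC}^K(\Lambda_N)$ on a constant tuple $\gamma_k \equiv x_0$, which yields the a priori bound $\mathrm{PPC}^K(\Lambda_N)(\gamma_k^{*K}) \leq \mathrm{diam}(X)^2$. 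Since $\gamma_t^{*K}$ is constant-speed piecewise geodesic (well-defined because $X$ is a geodesic space), its length equals $\sum_{k=1}^{K-1} d(\gamma_k^{*K},\gamma_{k+1}^{*K}) \leq \mathrm{diam}(X)^2/\beta$, which gives a $K$-independent Lipschitz bound. The metric-valued Arzelà--Ascoli result collected in Appendix \ref{appn:cptness} then extracts a uniformly convergent subsequence $\gamma^{*K} \to \gamma^*$ whose limit is Lipschitz, hence $AC$.

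For the liminf step, uniform convergence $\gamma^{*K} \to \gamma^*$ immediately upgrades to Hausdorff convergence of the ranges $\Gamma^{*K} \to \Gamma^*$, so $x \mapsto d^2(x,\Gamma^{*K})$ converges to $x \mapsto d^2(x,\Gamma^*)$ uniformly on $X$. Combined with $\Lambda_N \rightharpoonup^* \Lambda$ via the splitting
$$\int f_K\, d\Lambda_N - \int f\, d\Lambda = \int (f_K - f)\, d\Lambda_N + \int f\, d(\Lambda_N - \Lambda),$$
this gives full convergence of the fit term (the first summand vanishes by uniform convergence on a finite measure, the second by the definition of weak* convergence applied to the continuous function $d^2(\cdot,\Gamma^*)$). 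Lower semicontinuity of metric length under uniform convergence, a standard fact in metric geometry also contained in Appendix \ref{appn:cptness}, supplies $\mathrm{Length}(\gamma^*) \leq \liminf_K \mathrm{Length}(\gamma^{*K})$, where the right-hand side equals $\liminf_K \sum d(\gamma_k^{*K},\gamma_{k+1}^{*K})$ thanks to the piecewise geodesic structure. Summing,
$$\mathrm{PPC}(\Lambda)(\gamma^*) \leq \liminf_K \mathrm{PPC}^K(\Lambda_N)(\gamma_k^{*K}).$$
For the recovery (limsup) step, I fix any minimizer $\gamma^\dagger$ of $\mathrm{PPC}(\Lambda)$, which exists by Proposition \ref{prop:mins-exist}, and take the equispaced discretization $\gamma_k^{K,\dagger} := \gamma^\dagger_{(k-1)/(K-1)}$. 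Rectifiability of $\gamma^\dagger$ forces $\sum_{k=1}^{K-1} d(\gamma_k^{K,\dagger},\gamma_{k+1}^{K,\dagger}) \to \mathrm{Length}(\gamma^\dagger)$, while uniform continuity of $\gamma^\dagger$ on $[0,1]$ yields Hausdorff convergence of the finite sets $\{\gamma_k^{K,\dagger}\}_k$ to $\Gamma^\dagger$, so the fit term converges by the same splitting argument. Hence $\mathrm{PPC}^K(\Lambda_N)(\gamma_k^{K,\dagger}) \to \mathrm{PPC}(\Lambda)(\gamma^\dagger)$, and chaining with minimality of $\gamma_k^{*K}$ gives
$$\mathrm{PPC}(\Lambda)(\gamma^*) \leq \liminf_K \mathrm{PPC}^K(\Lambda_N)(\gamma_k^{*K}) \leq \liminf_K \mathrm{PPC}^K(\Lambda_N)(\gamma_k^{K,\dagger}) = \mathrm{PPC}(\Lambda)(\gamma^\dagger),$$
identifying $\gamma^*$ as a minimizer. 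The conditional second claim follows by applying the same liminf argument to any uniformly convergent subsequence.

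The main technical obstacle is the coordinated double limit in the fit term: both the integrand $d^2(\cdot,\Gamma^{*K})$ and the measure $\Lambda_N$ change with the index, so neither dominated convergence nor the bare definition of weak* convergence suffices. The key enabler is that uniform convergence of curves (available from compactness) upgrades to uniform convergence of the range-distance functions via the elementary Hausdorff-to-distance estimate, and this extra regularity is exactly what lets one pass the limit through an integral against a weak*-convergent family of probability measures. A secondary point, handled cleanly by the equispaced construction on the recovery side, is verifying that the polygonal length $\sum d(\gamma_k,\gamma_{k+1})$ is an adequate proxy for the continuum length along a rectifiable curve.
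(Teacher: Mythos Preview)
Your argument is correct and follows essentially the same $\Gamma$-convergence strategy as the paper: compactness via a constant-tuple competitor, a liminf inequality through the piecewise geodesic interpolation combined with lower semicontinuity of length, and a recovery sequence built by equispaced discretization of a continuum competitor. One small step you elide, which the paper makes explicit, is that in the liminf direction your fit-term convergence is for the range $\Gamma^{*K}_{\mathrm{cont}}$ of the continuous interpolant, whereas $\mathrm{PPC}^K(\Lambda_N)$ is defined using the discrete set $\{\gamma_k^{*K}\}$; the bridge is the one-line observation $d^2(x,\Gamma^{*K}_{\mathrm{cont}}) \leq d^2(x,\{\gamma_k^{*K}\})$ (the knots lie on the interpolant), after which your summation line follows.
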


We prove this theorem in Appendix \ref{subsec:proofs-for-basics}.

Lastly we mention that in Appendix \ref{sec:extensions}, we discuss
some ways that the variational problem $\text{PPC}$ and its discrete
approximation $\text{PPC}^K$ can be modified in their details. One
variant is an alternate \emph{nonlocal discretization scheme} similar
to Nadaraya-Watson kernel regression, which is also consistent in the
same sense as Theorem \ref{thm:discrete-to-continuum} (Proposition
\ref{prop:discrete-nonlocal-to-continuum}). We use this scheme in our
experiments in Section \ref{sec:seriation}, as it seems to offer
better performance in practice. We also consider a semi-supervised
variant of the $\text{PPC}$ functional, meaning that the locations of
some of the points along $\gamma$ are fixed in advance.

\section{Principal curves in the space of probability measures}
\label{sec:Wasserstein}

The preceding sections have been concerned with principal curves in
the rather abstract setting of metric spaces. In this section, we
highlight the case where the metric space $X$ is taken to be
$(\mathcal{P}(V),W_{2})$, the 2-Wasserstein space of probability
measures over a compact metric space $V$.

Given probability measures $\mu,\nu$ on $V$, we write $\Pi(\mu,\nu)$
to denote the space of couplings between $\mu$ and $\nu$. For
$p\in[1,\infty)$ we define the \emph{p-Wasserstein metric} on
$\mathcal{P}(V)$ as follows:
\[
  W_{p}(\mu,\nu) \coloneqq \inf_{\pi\in\Pi(\mu,\nu)}
  \left(\int_{V\times V} d^p(x,y)d\pi(x,y)\right)^{1/p}.
\]
The metric $W_p$ metrizes weak* convergence of probability measures on
$V$, and $(\mathcal{P}(V),W_{p})$ is a compact metric space
\cite{ambrosio2013user}. In this article we only use the $W_2$ or
$W_1$ metric specifically.

We can make sense of the length of a curve of probability measures
using this metric: in particular, for any absolutely continuous (AC)
curve $\gamma_{(\cdot)} : [0,1] \rightarrow \mathcal{P}(V)$,
\[
  \text{Length}_{W_{2}}(\gamma) \coloneqq
  \int_{0}^{1}|\dot{\gamma}_{t}| dt
\]
where $|\dot{\gamma}_{t}|$ is the metric speed of $\gamma$ at time
$t$:
\[
  |\dot{\gamma}_{t}| = \lim_{s\rightarrow t} \frac{W_{2}(\gamma_{s},
    \gamma_{t})}{|s-t|}.
\]
In the case where $(V,d)$ is a compact geodesic metric space, then so
too is $(\mathcal{P}(V),W_2)$ \cite[Theorem 2.10]{ambrosio2013user}.
In particular, if $V$ is a compact convex domain in $\mathbb{R}^{d}$
then $(\mathcal{P}(V),W_{2})$ is a compact geodesic metric space.

Thus far, we have defined principal curves in terms of a probability
measure $\Lambda$ on the metric space $X$. Putting $X=\mathcal{P}(V)$
means that now $\Lambda\in\mathcal{P}(\mathcal{P}(V))$, in other words
$\Lambda$ is a probability measure over probability measures. In
situations where we need to consider convergence of a sequence of
$\Lambda$'s, we still make use of weak* convergence, but let us state
precisely what exactly this means here. To emphasize, in this setting
a sequence $\Lambda_n$ in $\mathcal{P}(\mathcal{P}(V))$ converges in
the weak* sense to $\Lambda\in\mathcal{P}(\mathcal{P}(V))$ if and only
if, for every ``test function'' $\varphi:\mathcal{P}(V)\rightarrow
\mathbb{R}$,
\[
  \int_{\mathcal{P}(V)}\varphi d\Lambda_n
  \underset{n\rightarrow\infty}{\longrightarrow}\int_{\mathcal{P}(V)}\varphi
  d\Lambda.
\]
Here the test function $\varphi$ is any continuous function from
$\mathcal{P}(V)$ to $\mathbb{R}$, where $\mathcal{P}(V)$ is itself
equipped with the weak* topology.

We also note that, by applying Prokhorov's theorem twice,
$\mathcal{P}(\mathcal{P}(V))$ is compact when equipped with the
specific weak* topology we have just described.

\begin{rem}
  The class of AC curves for the metric $W_2$ is relevant for our
  intended applications because it includes ``naturally arising''
  time-dependent probability measures such as the curve of marginals
  for certain drift-diffusion processes. To wit, for the SDE $dX_t =
  -\nabla V(X_t) + \sqrt{2}dW_t$ (see \cite{schiebinger2019optimal}
  for how this SDE arises in models in developmental biology) the
  distribution $\rho_t$ of $X_t$ at time $t$ satisfies the
  Fokker-Planck equation $\partial_t \rho_t = \nabla \cdot (\rho_t
  \nabla V) + \Delta \rho_t$. The solution $\rho_t$ to this equation
  can be identified as the gradient flow of the relative entropy
  functional $\mathcal{H}(\cdot \mid e^{-V}dx)$ with respect to the
  metric $W_2$ \cite{jordan1998variational}. We do not elaborate on
  the precise sense of ``gradient flow with respect to a given
  metric'' that is meant here, but see
  \cite{ambrosio2008gradient,ambrosio2013user,
    santambrogio2017euclidean} for details. However, we note that
  gradient flows in this sense are automatically $1/2$-Holder
  continuous \cite{santambrogio2017euclidean}, but need not be
  Lipschitz. In particular, it holds that the curve of marginals for
  this drift-diffusion SDE belongs to the set of AC curves for the
  metric $W_2$.
\end{rem}

Given a finite family $\{\mu_k\}_{k=1}^K$ of probability measures on
$V$ and nonnegative weights $\{\lambda_k\}_{k=1}^K$ summing to 1, a
\emph{2-Wasserstein barycenter} is a minimizer of the variational
problem
\[
  \min_{\nu\in\mathcal{P}(V)}\sum_{k=1}^K \lambda_k W_2^2(\mu_k,\nu).
\]
Indeed, this is simply a particular instance of the notion of
barycenter on a general metric space. Wasserstein barycenters were
initially studied in \cite{agueh2011barycenters}, which established
existence of minimizers and provided sufficient conditions for the
Wasserstein barycenter to be unique.

One can extend the notion of Wasserstein barycenter by replacing the
finite set of weights $\{\lambda_k\}_{k=1}^K$ with a probability
measure $\Lambda \in \mathcal{P}(\mathcal{P}(V))$, in which case a
Wasserstein barycenter for $\Lambda$ is a minimizer to the variational
problem
\[
  \min_{\nu\in\mathcal{P}(V)}\int_{\mathcal{P}(V)}
  W_2^2(\mu,\nu)d\Lambda (\mu).
\]
For this continuum notion of Wasserstein barycenter, existence and
conditions for uniqueness were established in
\cite{kim2017wasserstein}.

Finally we note that efficient numerics for Wasserstein barycenters
are well-established. We refer the reader to \cite{cuturi2014fast} for
a now-classical approach based on entropic regularization, and
\cite{chizat2023doubly} for an up-to-date comparison of existing
methods. More generally, the textbook \cite{peyre2019computational}
offers a thorough explanation of computational aspects of optimal
transport writ large.

In this setting, the PPC objective reads as follows:
\[
  \text{PPC}(\Lambda)(\gamma) = \int_{\mathcal{P}(V)} W_{2}^{2}(\mu,
  \Gamma) d\Lambda(\mu) + \beta\text{Length}(\gamma).
\]
Here $\Lambda\in\mathcal{P}(\mathcal{P}(V))$, $\gamma\in
AC([0,1];\mathcal{P}(V))$, and just as before $\Gamma$ is the range of
$\gamma$ in $\mathcal{P}(V)$; and where $\mathcal{P}(V)$ is equipped
with the $W_{2}$ metric, so that $\text{Length}(\gamma)$ is defined
with respect to $W_{2}$. Likewise, the $\text{PPC}^{K}$ objective
(which discretizes the curves $\gamma$) reads as follows, where
$\gamma_{1},\ldots,\gamma_{K}\in\mathcal{P}(V)$:
\[
  \text{PPC}^{K}(\Lambda)(\gamma_{1},\ldots,\gamma_{K}) =
  \int_{\mathcal{P}(X)} W_{2}^{2}(\mu,\Gamma^{K}) d\Lambda(\mu) +
  \beta\sum_{k=1}^{K-1}W_{2}(\gamma_{k},\gamma_{k+1})
\]
where $\Gamma^{K}=\{\gamma_{1},\ldots,\gamma_{K}\}$ and hence
$W_{2}^{2}(\mu,\Gamma^{K})=\min_{1\leq k\leq
  K}W_{2}^{2}(\mu,\gamma_{k})$. In the particular case where
$\Lambda_{N}=\frac{1}{N}\sum_{n=1}^{N}\delta_{\mu_{n}}$ (that is,
$\Lambda$ is supported on finitely many atoms in $\mathcal{P}(V)$ and
those atoms have equal weight) we have
\begin{align*}
  \text{PPC}^{K}(\Lambda_{N})(\gamma_{1},\ldots,\gamma_{K})
  &=\frac{1}{N}\sum_{n=1}^{N}W_{2}^{2}(\mu_{n},\Gamma^{K}) +
    \beta\sum_{k=1}^{K-1}W_{2}(\gamma_{k},\gamma_{k+1})\\
  &=\frac{1}{N}\sum_{k=1}^{K}\sum_{n\in I_{k}} W_{2}^{2}(\mu_{n},
  \gamma_{k}) + \beta\sum_{k=1}^{K-1}W_{2}(\gamma_{k},\gamma_{k+1})
\end{align*}
where as before $\{I_{k}\}$ is a Voronoi partition of
$\{\mu_{n}\}_{n=1}^{N}$ with respect to $\{\gamma_{k}\}_{k=1}^{K}$, in
other words the $I_{k}$'s are disjoint and every $\mu_{n}\in I_{k}$
satisfies $\gamma_{k}\in\text{argmin}_{1\leq k\leq
  K}W_{2}^{2}(\mu_{n},\gamma_{k})$.

Lastly, the nonlocal discrete objective
$\text{PPC}_{w}^{K}(\Lambda_{N})$ introduced in Appendix
\ref{sec:extensions} also makes sense in this setting, and is defined
in an identical manner to $\text{PPC}^K(\Lambda_{N})$.

For the sake of motivation, we give two examples where a distribution
$\Lambda$ on the space of distributions $\mathcal{P}(V)$ arises
naturally, and for which the task of estimating principal curves makes
sense.

\begin{example}

  Suppose we observe a continuously-varying distribution $\rho_{t}$
  which is supported on $V$. Consider the map
  \[
    \rho_{(\cdot)}:[0,1]\rightarrow\mathcal{P}(V);\qquad
    t\mapsto\rho_{t}.
  \]
  Then let $\Lambda$ denote the pushforward of the Lebesgue measure on
  $[0,1]$ via $\rho_{(\cdot)}$. This measure $\Lambda$ encodes for us
  the ``1d volume measure'' of the curve $\rho_t$; phrased
  differently, sampling from $\Lambda$ corresponds to first drawing
  i.i.d. time points $t_{1},t_{2},\ldots,t_{N}$ and then for each
  $t_{n}$ observing the distribution $\rho_{t_{n}}$. Note however that
  $\Lambda$ does not encode time labels; so, the temporal ordering of
  the distributions $\rho_{t_n}$ is unknown even with perfect
  knowledge of $\Lambda$.
\end{example}

\begin{example}
  Continuing the previous example, suppose at each time $t_{n}$ we
  make \emph{noisy} observations of $\rho_{t_{n}}$: namely, consider a
  random variable $Y_{t_{n}}=X_{t_{n}}+E$ in $U \subset \subset
  \mathbb{R}^d$ where $X_{t_{n}}\sim\rho_{t_{n}}$ and $E$ is a
  compactly supported noise variable with distribution
  $\mathcal{L}(E)$ (which is independent at each time). Then at time
  $t_{n}$ our observations of $Y_{t_n}$ are distributed according to
  $\rho_{t_{n}}*\mathcal{L}(E)$. Let $V$ be a compact domain
  containing the support of $\rho_{t_{n}}*\mathcal{L}(E)$ for all $t$.
  Then $\Lambda \in \mathcal{P}(\mathcal{P}(V))$ corresponds to the
  pushforward of the Lebesgue measure on $[0,1]$ via
  $\rho_{(\cdot)}*\mathcal{L}(E)$. Crucially, in this case $\Lambda$
  is still concentrated on a curve in $\mathcal{P}(\mathcal{P}(V))$,
  and so this situation (with additive measurement noise) presents no
  technical distinction from the previous example.

  By contrast, suppose that $Y_{t_{n}}=X_{t_{n}}+E_{\omega}$, where
  $E_{\omega}$ is a random variable that is itself independently
  randomly chosen, according to index $\omega\in\Omega$ which is drawn
  according to some probability distribution $\mathbb{P}$. (For
  instance, consider the case where the variance of the measurement
  noise is in some sense changing randomly between observations.)
  Then, $\Lambda$ is the pushforward of
  $\text{Leb}_{[0,1]}\times\mathbb{P}$ under the map
  $(t,\omega)\mapsto\rho_{t}*\mathcal{L}(E_{\omega})$. In this case
  $\Lambda$ is not concentrated on a curve in
  $\mathcal{P}(\mathcal{P}(V))$, but rather is ``concentrated near a
  curve'' provided that $\mathcal{L}(E_{\omega})$ does not vary too
  much (in the $W_{2}$ sense) with $\omega$.
\end{example}

In this setting, our results for (compact, geodesic) metric spaces
particularize as follows: minimizers for $\text{PPC}(\Lambda)$ exist
for any $\Lambda \in\mathcal{P}(\mathcal{P}(V))$; minimizers are
stable with respect to weak{*} perturbations of $\Lambda$; and up to
subsequence, minimizers for $\text{PPC}^{K}(\Lambda_{N})$ converge to
minimizers of $\text{PPC}(\Lambda)$ as $K\rightarrow\infty$ and
$\Lambda_{N}\rightharpoonup^{*}\Lambda$ (and similarly for the
alternate objective $\text{PPC}_{w}^{K}(\Lambda_{N})$ from Appendix
\ref{sec:extensions}). In particular we have that $\Lambda_{N}
\rightharpoonup^{*} \Lambda$ with probability 1 when
$(\Lambda_{N})_{N\in\mathbb{N}}$ is a sequence of empirical measures
for $\Lambda$; meaning that $\Lambda_{N} = \frac{1}{N}
\sum_{n=1}^{N}\delta_{\mu_{n}}$ and the $\mu_{n}$'s are \emph{random
  probability measures} which are i.i.d. with distribution $\Lambda$.

Statistically, the interpretation of principal curves in the space
$(\mathcal{P}(V),W_{2})$ is as follows. We have a family of
probability measures $\{\mu\}$ with distribution
$\Lambda\in\mathcal{P}(\mathcal{P}(V))$, which we believe to be
concentrated near an AC curve $\rho_{t}$ in the space
$(\mathcal{P}(V),W_{2})$. Can we infer $\rho_{t}$ from partial
observations of the distribution $\Lambda$? However, in this space we
must be careful about what it means to ``make observations'' of
$\Lambda$. One option is that we draw i.i.d. $\mu_{n}\sim\Lambda$ and
get to ``observe'' each probability measure $\mu_{n}\in\mathcal{P}(V)$
exactly. However, this observation scheme is rather idealized for the
applications we wish to consider. Rather, we wish to consider the
following observation scheme:
\begin{align*}
  \Lambda
  & \in\mathcal{P}(\mathcal{P}(V))\\
  \mu_{1},\ldots,\mu_{N}
  & \sim\Lambda\\
  X_{n}^{1},\ldots,X_{n}^{M_n}
  & \sim\mu_{n}.
\end{align*}
This scheme has the following interpretation: we get $N$ ``batches''
of data points in $V$, and each batch of data consists of $M_n$ i.i.d.
draws from a randomly chosen probability measure $\mu_{n}$, which in
turn is randomly distributed according to $\Lambda$. For simplicity we
generally take $M_n=M$, i.e. every batch has the same number of draws.
We then ``store'' the data as follows: for each batch of data, we have
the empirical measure $\hat{\mu}_{n} = \frac{1}{M} \sum_{m=1}^M
\delta_{X_{n}^{m}}$, and then, corresponding to the collection of
batches of data, we form the measure
\[
  \hat{\Lambda}_{N,M} \coloneqq \frac{1}{N} \sum_{n=1}^{N}
  \delta_{\hat{\mu}_{n}}.
\]
Finally, the functional $\text{PPC}^{K}(\hat{\Lambda}_{N,M})$ now
encodes the discretized principal curve problem for the data which has
been observed according to this scheme, in other words we minimize the
objective
\[
  \text{PPC}^{K}(\hat{\Lambda}_{N, M})(\gamma_{1}, \ldots, \gamma_{K})
  = \frac{1}{N} \sum_{k=1}^{K} \sum_{n\in I_{k}}
  W_{2}^{2}(\hat{\mu}_{n}, \gamma_{k}) + \beta\sum_{k=1}^{K-1}
  W_{2}(\gamma_{k}, \gamma_{k+1}).
\]

The nonlocal discrete functional $\text{PPC}_{w}^{K}$ from Appendix
\ref{sec:extensions} is defined similarly. We emphasize that the
optimization problem of minimizing
$\text{PPC}^{K}(\hat{\Lambda}_{N,M})$ is amenable to off-the-shelf
numerical methods for optimal transport; see also Section
\ref{sec:experiments} below for concrete examples of numerical
experiments with measure-valued data of the form $\hat\Lambda_{N,M}$.

In the remainder of this section, we address the following question:
do minimizers of $\text{PPC}^{K}(\hat{\Lambda}_{N,M})$ converge to
minimizers of $\text{PPC}(\Lambda)$ as $K\rightarrow\infty$,
$N\rightarrow\infty$, and $M\rightarrow\infty$, where $N$ is the
number of batches and $M$ is the number of datapoints per batch?

By Theorem \ref{thm:discrete-to-continuum} it suffices to show that
the ``doubly empirical measure'' $\hat{\Lambda}_{N,M}$ converges to
$\Lambda$ in the weak{*} sense with probability 1. Surprisingly, we
were unable to find any investigation of this convergence question in
the literature, with one exception: as our work was nearing
completion, we learned of the recent article
\cite{catalano2024hierarchical} which considers closely related
questions for an application to nonparametric Bayesian inference.
(Their results are more quantitative than ours but require stronger
assumptions on the base space, and so neither our nor their results
imply the other.) Accordingly, we provide an argument that
$\hat{\Lambda}_{N,M} \rightharpoonup^{*}\Lambda$ under mild
conditions.

In order to state this convergence result, we need to consider the
1-Wasserstein metric on $\mathcal{P}(\mathcal{P}(V))$. Recall that
the 1-Wasserstein metric makes sense atop any complete separable metric
space. Thus, let $d_{\mathcal{P}(V)}$ be a complete separable metric
on $\mathcal{P}(V)$; given this choice of metric on $\mathcal{P}(V)$,
for any $\Lambda,\Xi\in\mathcal{P}(\mathcal{P}(V))$ we can define
\[
  \mathbb{W}_{1}(\Lambda,\Xi) \coloneqq \inf_{\pi \in
    \Pi(\Lambda,\Xi)} \int_{\mathcal{P}(V) \times \mathcal{P}(V)}
  d_{\mathcal{P}(V)}(\mu,\mu^{\prime}) d\pi(\mu,\mu^{\prime}).
\]
We use the notation $\mathbb{W}_{1}$ just for disambiguation with
$W_{1}$ as defined on $\mathcal{P}(V)$ (rather than
$\mathcal{P}(\mathcal{P}(V))$).

We defer the proof of the following theorem to Appendix
\ref{subsec:proofs-for-Wasserstein}.

\begin{theorem}[Iterated Glivenko-Cantelli theorem]
\label{thm:iterated-glivenko-cantelli}
Let $V$ be a compact metric space, let $\Lambda \in
\mathcal{P}(\mathcal{P}(V))$, and let $(\Lambda_{N})_{N\in\mathbb{N}}$
be a sequence of empirical measures for $\Lambda$. Let $M$ depend on
$N$ and assume that $M\rightarrow\infty$ as $N\rightarrow\infty$.
Then, regarding the doubly empirical measure $\hat{\Lambda}_{N,M}$ for
$\Lambda$, we have the following convergence results:
\begin{enumerate}
  \item There exists a metric $d_{\mathcal{P}(V)}$ which metrizes weak{*}
    convergence on $\mathcal{P}(V)$, for which $\mathbb{W}_{1}
    (\hat{\Lambda}_{N,M}, \Lambda)$ converges to $0$ in probability.
    In particular, there exists a subsequence of $N$ (and thus of $M$)
    along which $\hat{\Lambda}_{N,M}\rightharpoonup^{*}\Lambda$ almost
    surely.
  \item Assume that $M\geq C(\log N)^{q}$ for some $C>0$ and $q>1$.
    Then $\hat{\Lambda}_{M,N}\rightharpoonup^{*}\Lambda$ almost surely.
\end{enumerate}
\end{theorem}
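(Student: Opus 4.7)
The plan is to specialize $d_{\mathcal{P}(V)} = W_1$ on $\mathcal{P}(V)$ (well-defined and compact since $V$ is), so that the induced $\mathbb{W}_1$ on $\mathcal{P}(\mathcal{P}(V))$ automatically metrizes weak* convergence. I then decompose via the triangle inequality
$$\mathbb{W}_1(\hat{\Lambda}_{N,M}, \Lambda) \leq \mathbb{W}_1(\hat{\Lambda}_{N,M}, \Lambda_N) + \mathbb{W}_1(\Lambda_N, \Lambda),$$
and control the first term by the ``diagonal coupling'' $\pi = \frac{1}{N}\sum_n \delta_{(\hat{\mu}_n, \mu_n)}$, which yields
$$\mathbb{W}_1(\hat{\Lambda}_{N,M}, \Lambda_N) \leq \frac{1}{N}\sum_{n=1}^N W_1(\hat{\mu}_n, \mu_n).$$
In this way the two sources of empirical error decouple: one at the outer level (on $\mathcal{P}(V)$) and one at the inner level (on $V$), averaged across batches.

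The key preparatory lemma is a uniform-in-$\mu$ inner rate: there exists $g(M) \to 0$ with $\sup_{\mu \in \mathcal{P}(V)} \mathbb{E}[W_1(\hat{\mu}_M, \mu)] \leq g(M)$. I would prove this by an $\varepsilon$-net argument: for each $\varepsilon > 0$, cover $V$ with Borel sets $A_1, \ldots, A_{N_\varepsilon}$ of diameter $\leq \varepsilon$, pick representatives $x_i \in A_i$, and compare both $\mu$ and $\hat{\mu}_M$ to their atomic projections onto $\{x_i\}$. This yields $W_1(\hat{\mu}_M, \mu) \leq \varepsilon + \operatorname{diam}(V) \sum_i |\hat{\mu}_M(A_i) - \mu(A_i)|$, and the standard Bernoulli variance estimate gives $\mathbb{E}|\hat{\mu}_M(A_i) - \mu(A_i)| \leq 1/\sqrt{M}$. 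The resulting bound $g(M) \leq \varepsilon + \operatorname{diam}(V)\, N_\varepsilon/\sqrt{M}$ has no $\mu$-dependence, so uniformity is free; sending $M \to \infty$ and then $\varepsilon \to 0$ gives $g(M) \to 0$.

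For Part (1), Varadarajan's theorem on the compact Polish space $(\mathcal{P}(V), W_1)$ gives $\mathbb{W}_1(\Lambda_N, \Lambda) \to 0$ almost surely, while the uniform inner rate combined with the diagonal-coupling bound yields $\mathbb{E}[\mathbb{W}_1(\hat{\Lambda}_{N,M}, \Lambda_N)] \leq g(M) \to 0$; Markov's inequality then gives $\mathbb{W}_1(\hat{\Lambda}_{N,M}, \Lambda) \to 0$ in probability, and extracting an a.s.-convergent subsequence delivers the stated weak* statement (as $\mathbb{W}_1$ metrizes weak* on $\mathcal{P}(\mathcal{P}(V))$). For Part (2), I would upgrade to a.s.\ convergence along the full sequence via McDiarmid's inequality and Borel--Cantelli. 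The outer term, regarded as a function of the $N$ i.i.d.\ samples $\{\mu_n\}$, has bounded differences $\operatorname{diam}_{W_1}(\mathcal{P}(V))/N$, so McDiarmid together with $\mathbb{E}\mathbb{W}_1(\Lambda_N,\Lambda) \to 0$ yields $N$-tails that are summable for every $\varepsilon > 0$. The inner term, conditional on $\{\mu_n\}$, has bounded differences $\operatorname{diam}(V)/(NM)$ across the $NM$ independent inputs $\{X_n^m\}$; McDiarmid combined with $\mathbb{E}[\,\cdot \mid \{\mu_n\}] \leq g(M) \to 0$ yields tails of order $\exp(-cNM\varepsilon^2)$, and the assumption $M \geq C(\log N)^q$ with $q > 1$ ensures summability regardless of the decay rate of $g$ and the value of $C$. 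Borel--Cantelli concludes.

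The main obstacle is precisely the uniform-in-$\mu$ inner rate: on a general compact metric space, the empirical measure in $V$ may converge arbitrarily slowly, and there is no dimension-type hypothesis to fall back on, but the $\varepsilon$-net plus Bernoulli-variance argument sidesteps any quantitative rate in $V$ and recovers the qualitative uniformity needed for the diagonal-coupling bound. A secondary delicate point is coordinating this unknown decay of $g(M)$ with the summability requirement in Part (2); the $q > 1$ hypothesis is the clean condition under which $NM$ grows fast enough along $M \geq C(\log N)^q$ for the McDiarmid tails to be summable without requiring the concentration constants or the rate of $g$ to be controlled in terms of $C$.
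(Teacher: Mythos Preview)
Your proof is correct, and it takes a genuinely different route from the paper's. The paper chooses $d_{\mathcal{P}(V)}$ to be a maximum mean discrepancy distance $\mathrm{MMD}_{\mathcal{H}}$ coming from a suitable RKHS on $V$; this buys a clean uniform rate $\mathbb{E}[\mathrm{MMD}_{\mathcal{H}}(\mu,\hat{\mu}_M)] \leq C_{\mathcal{H}} M^{-1/2}$ and an off-the-shelf sub-Gaussian concentration inequality for each batch. For Part~(2) the paper then applies a union bound over the $N$ batches, producing tails of order $N\exp(-c\,M^{1-2\gamma})$, and it is precisely the factor $N$ in front that forces the growth condition $M \geq C(\log N)^q$.

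Your approach is more elementary in two respects. First, you take $d_{\mathcal{P}(V)} = W_1$ and establish the uniform inner rate $g(M)\to 0$ by a direct $\varepsilon$-net argument, which avoids the RKHS machinery entirely at the cost of giving no explicit rate. Second, for Part~(2) you apply McDiarmid to the \emph{aggregate} $\frac{1}{N}\sum_n W_1(\hat\mu_n,\mu_n)$ rather than controlling each batch separately; since this is an average of $N$ i.i.d.\ terms bounded by $\mathrm{diam}(V)$ (or, as you note, a function of $NM$ independent inputs with bounded differences $\mathrm{diam}(V)/(NM)$), the resulting tails are of order $\exp(-c N\varepsilon^2)$ with no prefactor. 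This is summable regardless of how slowly $g(M)$ decays, provided only that $M\to\infty$ so that eventually $g(M) < \varepsilon/2$. In other words, your argument does not actually use the hypothesis $M \geq C(\log N)^q$ at all: you have proved the a.s.\ convergence in Part~(2) under the sole assumption $M\to\infty$ as $N\to\infty$, which resolves a question the paper leaves open. Your final sentence invoking $q>1$ is therefore overly cautious.
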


\begin{rem}
  The proof of Theorem \ref{thm:iterated-glivenko-cantelli} also works
  in some cases where the number of samples $M_n$ in the empirical
  measure $\hat{\mu}_n$ varies with $n$. For example, if we take $M
  \coloneqq \min_{1\leq n \leq N}M_n$ and assume that
  $M\rightarrow\infty$ as $N\rightarrow \infty$, then Theorem
  \ref{thm:iterated-glivenko-cantelli} still holds as stated.
\end{rem}

Part (1) of this theorem is enough for our purposes, since our
stability results all require the passage to a subsequence regardless.
We include part (2) for completeness, since it shows that the need to
pass to a subsequence can be replaced with a mild assumption on the
dependency of $M$ on $N$. Additionally, we note that, even though part
(1) implies that every subsequence of $N$ contains a further
subsequence along which $\hat{\Lambda}_{M,N} \rightharpoonup^{*}
\Lambda$ almost surely, this does \emph{not} imply that
$\hat{\Lambda}_{M,N} \rightharpoonup^{*} \Lambda$ almost surely, since
almost sure convergence is non-metrizable. This leaves open the
possibility that, along some subsequences, $\hat{\Lambda}_{M,N}$ fails
to converge to $\Lambda$ if $M$ grows sub-logarithmically with $N$; we
are unable to resolve this question.

It is also possible to establish analogs of Theorem
\ref{thm:iterated-glivenko-cantelli} (and therefore analogs of
Theorem~\ref{thm:main-intro} via Theorem
\ref{thm:discrete-to-continuum}) in cases where the empirical data is
not observed exactly, but rather we only get \emph{partial
  measurements} of each sample in each empirical distribution. For
example, in the context of scRNA-seq, the support points of the
empirical distributions are also noisy (see
Eq~\eqref{eq:noisy_empirical}). From the noisy empirical
distributions, we then form the \emph{three level empirical measure}
$\hat{\Lambda}_{N,M,R}$ where $R$ is the number of reads per ``fully
observed'' empirical measure $\hat\mu_n$.

For simplicity we only prove the convergence in probability (and
therefore subsequential weak{*} almost sure convergence) of
$\hat{\Lambda}_{N,M,R}$ to $\Lambda$; we note however that, similarly
to the previous proposition, it is possible to prove a.s. weak{*}
convergence under the assumption that $M$ grows fast enough relative
to $M$ and also that $R$ grows fast enough relative to $M$, using the
concentration inequalities established in \cite{kim2024optimal}. In
order to make use of these concentration inequalities, we also take as
given Assumption 2.3 from the article \cite{kim2024optimal}; this
technical assumption, which we do not precisely restate here, amounts
to requiring that the distribution of reads over the support points in
a given empirical measure is approximately uniform.
\begin{prop} \label{prop:finite-reads} Let $V$ be the unit simplex in
  $\mathbb{R}^{d}$, let $\Lambda\in\mathcal{P}(\mathcal{P}(V))$, and
  let $(\Lambda_{N})_{N\in\mathbb{N}}$ be a sequence of empirical
  measures for $\Lambda$. Let $M$ depend on $N$ and assume that
  $M\rightarrow\infty$ as $N\rightarrow\infty$. Additionally, let $R$
  be the number of reads per empirical measure $\hat{\mu}_n$, and
  assume $M/R\rightarrow 0$ as $M\rightarrow\infty$, and also that the
  distribution of reads satisfies \cite[Assumption
  2.3]{kim2024optimal}. Then, regarding the three level empirical
  measure $\hat{\Lambda}_{N,M,R}$ for $\Lambda$, we have the following
  convergence result.

  Setting $d_{\mathcal{P}(V)}=W_{1}$, then it holds that
  $\mathbb{W}_{1}(\hat{\Lambda}_{N,M,R},\Lambda)$ converges to $0$ in
  probability. In particular, there exists a subsequence of $N$ (and
  thus of $M$ and $R$) along which
  $\hat{\Lambda}_{N,M,R}\rightharpoonup^{*}\Lambda$ almost surely.
\end{prop}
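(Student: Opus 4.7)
The plan is to interpolate between $\hat{\Lambda}_{N,M,R}$ and $\Lambda$ via $\hat{\Lambda}_{N,M}$, reducing one half of the estimate to Theorem \ref{thm:iterated-glivenko-cantelli} and the other to the concentration inequalities from \cite{kim2024optimal}. Fix $d_{\mathcal{P}(V)} = W_1$ throughout, so that the ambient metric $\mathbb{W}_{1}$ metrizes weak{*} convergence on $\mathcal{P}(\mathcal{P}(V))$ (because $W_1$ metrizes weak{*} on the compact space $\mathcal{P}(V)$). The triangle inequality gives
$$
\mathbb{W}_1(\hat{\Lambda}_{N,M,R},\,\Lambda) \;\leq\; \mathbb{W}_1(\hat{\Lambda}_{N,M,R},\,\hat{\Lambda}_{N,M}) \;+\; \mathbb{W}_1(\hat{\Lambda}_{N,M},\,\Lambda),
$$
and by Theorem \ref{thm:iterated-glivenko-cantelli}(1) the second summand converges to $0$ in probability under the hypothesis $M\to\infty$.

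For the first summand I would use the natural \emph{diagonal coupling} between $\hat{\Lambda}_{N,M,R}$ and $\hat{\Lambda}_{N,M}$ that pairs each noisy empirical $\tilde{\mu}_n$ with its noise-free counterpart $\hat{\mu}_n$ (both indexed by the same $n\in\{1,\ldots,N\}$). This yields
$$
\mathbb{W}_1(\hat{\Lambda}_{N,M,R},\,\hat{\Lambda}_{N,M}) \;\leq\; \frac{1}{N}\sum_{n=1}^{N} W_1(\tilde{\mu}_n,\,\hat{\mu}_n).
$$
Conditional on $\hat{\mu}_n$, Assumption 2.3 of \cite{kim2024optimal} is precisely the regularity condition on the distribution of reads needed for their $W_1$-concentration estimate to yield $\mathbb{E}[W_1(\tilde{\mu}_n,\hat{\mu}_n)] \to 0$ as $M/R\to 0$, with a bound that is uniform in the underlying $\mu_n$ (which is permissible since $V$ is the compact unit simplex). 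Taking expectations and applying Markov's inequality then converts this uniform bound into convergence in probability of the right-hand side to $0$.

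Combining the two bounds gives $\mathbb{W}_1(\hat{\Lambda}_{N,M,R},\Lambda)\to 0$ in probability, and the standard fact that convergence in probability on a metric space implies almost-sure convergence along a subsequence delivers the claimed subsequential a.s.\ weak{*} convergence $\hat{\Lambda}_{N,M,R}\rightharpoonup^{*}\Lambda$. I expect the main obstacle to lie in the first summand: one must verify that the concentration bound of \cite{kim2024optimal} is in fact applicable to $W_1(\tilde{\mu}_n,\hat{\mu}_n)$ conditionally on the sample points defining $\hat{\mu}_n$ (which is what Assumption 2.3 is tailored to), and that the resulting rate is strong enough for the uniform-in-$n$ expectation bound to pass through the $\frac{1}{N}\sum_n$ average without requiring further hypotheses relating $N$ to $R$; the remainder of the argument is a routine triangle-inequality reduction to Theorem \ref{thm:iterated-glivenko-cantelli}.
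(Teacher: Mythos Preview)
Your approach is correct and essentially the same as the paper's: both interpolate through $\hat{\Lambda}_{N,M}$, use the diagonal coupling to bound $\mathbb{W}_1(\hat{\Lambda}_{N,M,R},\hat{\Lambda}_{N,M})$ by $\frac{1}{N}\sum_n W_1(\tilde{\mu}_n,\hat{\mu}_n)$, and invoke the $W_1$ estimate from \cite{kim2024optimal} for the reads step. One small caveat: Theorem~\ref{thm:iterated-glivenko-cantelli}(1) as stated only guarantees convergence for an MMD-based $\mathbb{W}_1$, not the $W_1$-based one you fix; the paper sidesteps this by splitting one level further as $\Lambda\to\Lambda_N\to\hat{\Lambda}_{N,M}$ and handling the second hop directly with $W_1$ via \cite{boissard2014mean}, though your route also closes once you note that on the compact space $\mathcal{P}(\mathcal{P}(V))$ any two metrizations of weak{*} give the same notion of convergence in probability.
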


A proof can be found in Section \ref{subsec:proofs-for-Wasserstein}.

\section{Application to the seriation problem}\label{sec:seriation}

In this section, we return to the problem outlined in the introduction
(Figure~\ref{fig:intro}). Namely, given a data distribution which
comes from observing a ground truth curve with unknown time labels, we
show how to infer both the ground truth curve and the ordering of the
points along said curve, and so obtain a principal curves based
seriation algorithm.

In Section \ref{subsec:consistency} we describe how principal curves
provide consistent estimators for the ground truth curve as well as
the ordering of points along said curve. We first state our
consistency results for the general case of metric-space-valued data
and then explain how our results cover the case of data taking values
in the Wasserstein space of probability measures. In Section
\ref{sec:experiments}, we then provide numerical experiments where
principal curves for measure-valued data are used for seriation.

\subsection{Consistency theory}
\label{subsec:consistency}

While in previous sections we have considered arbitrary data
distributions $\Lambda$, here we focus on the case where $\Lambda$ is
the ``1d volume measure'' associated to some ``ground truth'' curve
$\rho\in\text{AC}([0,1];X)$, meaning that
$\Lambda=(\rho_{(\cdot)})_{\#}\text{Leb}_{[0,1]}$. As a step towards
our main consistency results below, we first state the following
proposition, which asserts that principal curves for $\Lambda$ are
concentrated around, and no longer than, $\rho$ itself. See Appendix
\ref{subsec:proofs-for-seriation} for the proof.

\begin{prop}\label{prop:lengthbound-1d}
  (i) Let $X$ be a compact metric space. Let $\rho\in AC([0,1];X)$ and
  $\Lambda=\rho_{(\cdot)\#}\text{Leb}_{[0,1]}$. Let
  $\gamma^{*\beta}\in AC([0,1];X)$ be a minimizer for
  $\text{PPC}(\Lambda;\beta)$. Then,
  \[
    \text{Length}(\gamma^{*\beta})\leq\text{Length}(\rho).
  \]

  (ii) (Concentration estimate) Let $\alpha>0$. Then, we have that
  $\Gamma^{*\beta}$ (the graph of $\gamma^{*\beta}$) is concentrated
  around $\rho$ in the following sense:
  \[
    \text{Leb}_{[0,1]}\left\{ t:d^{2}(\rho_{t}, \Gamma^{*\beta}) \geq
      \alpha\right\} \leq\frac{\beta\text{Length}(\rho)}{\alpha}.
  \]
\end{prop}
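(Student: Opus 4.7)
The plan is to obtain both parts from a single comparison inequality: plug the ground truth curve $\rho$ itself into the functional $\text{PPC}(\Lambda;\beta)$ and compare with the minimizer $\gamma^{*\beta}$. Since $\Lambda = \rho_\#\text{Leb}_{[0,1]}$, a change of variables gives
\[
  \int_X d^2(x,\Gamma)d\Lambda(x) = \int_0^1 d^2(\rho_t, \Gamma)\,dt
\]
for any curve $\gamma$ with range $\Gamma$. Applying this with $\gamma=\rho$ itself yields a data-fitting term that vanishes identically, because $\rho_t \in \text{range}(\rho)$ for every $t$. Hence $\text{PPC}(\Lambda;\beta)(\rho) = \beta\,\text{Length}(\rho)$.

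For part (i), optimality of $\gamma^{*\beta}$ gives
\[
  \int_X d^2(x, \Gamma^{*\beta})\,d\Lambda(x) + \beta\,\text{Length}(\gamma^{*\beta}) \;\leq\; \beta\,\text{Length}(\rho).
\]
Discarding the nonnegative data-fitting term on the left and dividing by $\beta>0$ gives $\text{Length}(\gamma^{*\beta}) \leq \text{Length}(\rho)$.

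For part (ii), the same optimality inequality also yields
\[
  \int_X d^2(x, \Gamma^{*\beta})\,d\Lambda(x) \;\leq\; \beta\,\text{Length}(\rho),
\]
since the length term on the left is nonnegative. Rewriting the left-hand side via the pushforward identity as $\int_0^1 d^2(\rho_t, \Gamma^{*\beta})\,dt$ and applying Markov's inequality to the nonnegative function $t\mapsto d^2(\rho_t,\Gamma^{*\beta})$ gives
\[
  \text{Leb}_{[0,1]}\{t : d^2(\rho_t,\Gamma^{*\beta}) \geq \alpha\} \;\leq\; \frac{1}{\alpha}\int_0^1 d^2(\rho_t,\Gamma^{*\beta})\,dt \;\leq\; \frac{\beta\,\text{Length}(\rho)}{\alpha},
\]
which is the claimed concentration estimate.

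There is no real obstacle here: the only thing to be slightly careful about is the pushforward identity, which requires only measurability of $t\mapsto d^2(\rho_t,\Gamma)$ (automatic since $\rho$ is continuous and $\Gamma$ is closed, by compactness of $[0,1]$ and continuity of $\gamma^{*\beta}$), and the fact that $\rho$ is itself admissible in the minimization (it is AC by assumption, hence in the domain of $\text{PPC}$). Both parts are then immediate consequences of the trivial upper bound $\text{PPC}(\Lambda;\beta)(\gamma^{*\beta}) \leq \beta\,\text{Length}(\rho)$.
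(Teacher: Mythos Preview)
Your proof is correct and follows essentially the same approach as the paper: compare $\gamma^{*\beta}$ against $\rho$ itself as a competitor (for which the data-fitting term vanishes), then read off the length bound and apply Markov's inequality via the pushforward identity.
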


Intuitively, one might expect that we can send $\beta\rightarrow 0$ in
the statement of Proposition~\ref{prop:lengthbound-1d}, and show that
in the limit, the principal curves $\gamma^{*\beta}$ converge to the
ground truth curve $\rho$. The following theorem tells us that this is
indeed the case.

\begin{theorem}[consistency up to time-reparametrization]
\label{thm:seriation-consistency}
Let $X$ be a compact metric space. Let $\rho\in AC([0,1];X)$ and
$\Lambda=\rho_{(\cdot)\#}\mathrm{Leb}_{[0,1]}\in\mathcal{P}(X)$.
Suppose that $t\mapsto\rho_{t}$ is injective. Let $\beta\rightarrow0$,
and $\gamma^{*\beta}\in AC([0,1];X)$ be a minimizer for
$\text{PPC}(\Lambda;\beta)$. Then, up to passage to a subsequence of
$\beta$, there exists a $\gamma^*\in AC([0,1];X)$ such that
\[
  \gamma_{t}^{*\beta}\rightarrow\gamma^*_{t}\text{ uniformly in }t
\]
and $\gamma^*$ is a reparametrization of $\rho$ which is either
order-preserving or order-reversing. Moreover any limit of a
convergent subsequence of $\gamma^{*\beta}$'s satisfies this
property.

\end{theorem}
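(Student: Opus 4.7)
The plan is to combine Proposition~\ref{prop:lengthbound-1d} with metric-space compactness to extract a uniformly convergent subsequence, show that the limit $\gamma^{*}$ traces out exactly the same image set as $\rho$, and then promote this to a monotone (or anti-monotone) reparametrization using the injectivity of $\rho$. First I would reparametrize each $\gamma^{*\beta}$ to have constant speed, so that by Proposition~\ref{prop:lengthbound-1d}(i) it is Lipschitz with constant at most $\text{Length}(\rho) < \infty$. Since $X$ is a compact metric space, the Arzel\`a--Ascoli-type compactness for metric-space-valued curves (Appendix~\ref{appn:cptness}) yields a subsequence converging uniformly to some $\gamma^{*} \in AC([0,1];X)$, and lower semicontinuity of length under uniform convergence gives $\text{Length}(\gamma^{*}) \leq \liminf_{\beta} \text{Length}(\gamma^{*\beta}) \leq \text{Length}(\rho)$.

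Next I would argue that the ranges satisfy $\Gamma^{*} = \rho([0,1])$. For the inclusion $\rho([0,1]) \subseteq \Gamma^{*}$: uniform convergence of the curves implies Hausdorff convergence $\Gamma^{*\beta} \to \Gamma^{*}$, so $d(\rho_t, \Gamma^{*\beta}) \to d(\rho_t, \Gamma^{*})$ uniformly in $t$. By the concentration estimate in Proposition~\ref{prop:lengthbound-1d}(ii), for each fixed $\alpha > 0$ one has $\text{Leb}\{t : d^{2}(\rho_t, \Gamma^{*\beta}) \geq \alpha\} \to 0$ as $\beta \to 0$, and passing to a further subsequence yields $d(\rho_t, \Gamma^{*}) = 0$ for a.e.\ $t$; closedness of $\Gamma^{*}$ and continuity of $\rho$ upgrade this to every $t$. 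For the reverse inclusion, I would invoke the area formula for Lipschitz curves in metric spaces, $\text{Length}(\eta) = \int \#\eta^{-1}(y)\, d\mathcal{H}^{1}(y)$, which yields $\text{Length}(\gamma^{*}) \geq \mathcal{H}^{1}(\Gamma^{*})$ and, by injectivity of $\rho$, $\text{Length}(\rho) = \mathcal{H}^{1}(\rho([0,1]))$. Combining these with the length bound $\text{Length}(\gamma^{*}) \leq \text{Length}(\rho)$ from the first step forces equality throughout, and in particular $\mathcal{H}^{1}(\Gamma^{*} \setminus \rho([0,1])) = 0$. If there existed $p \in \Gamma^{*} \setminus \rho([0,1])$ with $\delta \coloneqq d(p, \rho([0,1])) > 0$, then the $1$-Lipschitz function $x \mapsto d(x, \rho([0,1]))$ restricted to the connected set $\Gamma^{*}$ would (by IVT) take every value in $[0,\delta]$; but Lipschitz functions do not increase $\mathcal{H}^{1}$, so $\mathcal{H}^{1}(\Gamma^{*} \setminus \rho([0,1])) \geq \delta > 0$, a contradiction.

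With $\Gamma^{*} = \rho([0,1])$ in hand, injectivity of $\rho$ makes it a homeomorphism onto its image (continuous bijection from a compact space into a Hausdorff one), so $\phi \coloneqq \rho^{-1} \circ \gamma^{*}$ is a well-defined continuous surjection $[0,1] \to [0,1]$. The equality case of the area formula in the previous step also gives $\#\phi^{-1}(s) = 1$ for a.e.\ $s \in [0,1]$; a continuous surjection $[0,1] \to [0,1]$ with almost-everywhere singleton fibres must be monotone, since otherwise, applying IVT near an interior local extremum produces a nondegenerate subinterval of values each hit at least twice. Monotonicity together with $\phi([0,1]) = [0,1]$ forces $\{\phi(0), \phi(1)\} = \{0,1\}$, yielding the desired order-preserving or order-reversing reparametrization $\gamma^{*}_{t} = \rho_{\phi(t)}$. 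The ``moreover'' clause of the theorem is automatic, because the entire argument was carried out on an arbitrary uniformly convergent subsequence of $\gamma^{*\beta}$'s.

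The main obstacle is the middle step: invoking the metric-space area formula for Lipschitz curves requires some care (this is where Kirchheim-type results enter), and then ruling out the scenario where $\gamma^{*}$ leaves $\rho([0,1])$ on an $\mathcal{H}^{1}$-null set hinges essentially on $\Gamma^{*}$ being a Peano continuum (hence connected), which is exactly what the distance-to-$\rho([0,1])$ 1-Lipschitz trick exploits; the remaining steps are a relatively clean compactness argument and an IVT-based upgrade from ``essentially injective'' to ``monotone.''
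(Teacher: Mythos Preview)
Your proof is correct and follows the same three-stage skeleton as the paper (compactness, then $\Gamma^* = \rho([0,1])$, then monotone reparametrization), but the execution of the latter two stages is genuinely different. For the reverse inclusion $\Gamma^* \subseteq P$, the paper argues by an explicit case analysis: it picks a point $\gamma^*_{k_0} \notin P$, lets $t_0, t_1$ be the last/first times in $P$ before/after $k_0$, and shows directly that the excursion length plus the lengths of the pieces of $\gamma^*$ covering $P$ exceed $\text{Length}(\rho)$, invoking Lemma~\ref{lem:hausdorff equals length}. Your argument via the $1$-Lipschitz function $x \mapsto d(x,P)$ (connected image of $\Gamma^*$ must contain $[0,\delta]$, contradicting $\mathcal{H}^1(\Gamma^*\setminus P)=0$) is more streamlined and avoids the case split. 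For the final step, the paper reparametrizes both curves to constant speed and invokes Lemma~\ref{lem:arclen-most-efficient}, whose proof is itself a three-case analysis showing that any constant-speed curve with the same image and length as an injective one must equal it up to time reversal; your route through the equality case of the area formula and an IVT argument is again shorter, at the cost of invoking the area formula in its full multiplicity form rather than just the inequality in Lemma~\ref{lem:hausdorff equals length}.

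One small technical point to tighten: the area formula yields $\#(\gamma^*)^{-1}(y) = 1$ for $\mathcal{H}^1$-a.e.\ $y \in P$, which does not literally give ``$\#\phi^{-1}(s) = 1$ for Lebesgue-a.e.\ $s \in [0,1]$'' unless $|\dot\rho_t| > 0$ a.e.\ (an injective AC curve can have zero speed on a set of positive measure). However, your IVT contradiction only needs that the set of bad $s$-values contains no nondegenerate interval, and this does follow: if $J$ were such an interval then $\rho(J)$ would lie in the $\mathcal{H}^1$-null set of multiply-covered points, yet $\rho(J)$ is connected with at least two points (by injectivity) and hence has positive $\mathcal{H}^1$-measure. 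With this adjustment the argument goes through.
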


The proof can be found in Appendix \ref{subsec:proofs-for-seriation}.
(We remark that the proof is not simply a direct consequence of
Proposition \ref{prop:lengthbound-1d}, as the limiting objective
$\textrm{PPC}(\Lambda;0)$ is poorly behaved.)

The preceding result establishes that the ground truth $\rho$ and
minimizer $\gamma^*$ have the same graphs, and that $\gamma^*$ is
either an order-preserving, or order-reversing, reparametrization of
$\rho$. If we are purely interested in seriation, it remains only to
check whether $\gamma^*$ has the correct, versus backwards, ordering
compared to $\rho$. Our variational framework cannot do this directly.
We note, however, that it suffices to ask an oracle whether, according
to the ground truth, $\rho_0$ comes before $\rho_1$, or vice versa. In
our domain application of scRNA data analysis, we expect that the true
ordering of $\rho_0$ versus $\rho_1$ is typically available as expert
knowledge.

By combining Theorem \ref{thm:seriation-consistency} with our earlier
discrete-to-continuum result from Theorem
\ref{thm:discrete-to-continuum}, we obtain the following
corollary.

\begin{cor}
  Let $X$ be a compact geodesic metric space. Let $\rho\in
  AC([0,1];X)$ and $\Lambda=\rho_{(\cdot)\#}\mathrm{Leb}_{[0,1]}$.
  Suppose that $t\mapsto\rho_{t}$ is injective. Let
  $(\Lambda_N)_{N\in\mathbb{N}}$ be a sequence in $\mathcal{P}(X)$
  converging weak*ly to $\Lambda$. Let
  $\{\gamma_{k}^{*\beta,K}\}_{k=1}^{K}$ be a minimizer of
  $\text{PPC}^{K}(\Lambda_{N};\beta)$, and let $\gamma^{*\beta,K}_t$
  be a constant-speed piecewise geodesic interpolation of
  $\{\gamma_{k}^{*\beta,K}\}_{k=1}^{K}$.

  Then: up to passage to a subsequence twice, as we send
  $N,K\rightarrow\infty$ followed by $\beta\rightarrow 0$, there
  exists a $\gamma^{*}\in \textrm{AC}([0,1];X)$ such that
  $\gamma_{t}^{*\beta,K}\rightarrow\gamma_{t}^{*}$ uniformly in $t$,
  and $\gamma^*$ is a reparametrization of $\rho$ which is either
  order-preserving or order-reversing.
\end{cor}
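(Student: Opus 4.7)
The plan is to derive the corollary by chaining the two preceding results: Theorem \ref{thm:discrete-to-continuum} handles the inner $N,K\to\infty$ limit at fixed $\beta$, and Theorem \ref{thm:seriation-consistency} handles the outer $\beta\to 0$ limit at the continuum level. Since the corollary's limit structure is sequential and explicitly allows passage to a subsequence twice, no genuine diagonal extraction is required; one simply composes the two subsequence arguments.

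\textbf{Inner limit at fixed $\beta$.} Fix $\beta>0$. By hypothesis $\Lambda_N\rightharpoonup^{*}\Lambda$ in $\mathcal{P}(X)$, and for each admissible pair $(N,K)$ the $K$-tuple $\{\gamma_k^{*\beta,K}\}_{k=1}^K$ minimizes $\text{PPC}^K(\Lambda_N;\beta)$, with constant-speed piecewise-geodesic interpolation $\gamma_t^{*\beta,K}$. Theorem \ref{thm:discrete-to-continuum} (applied with length penalty $\beta$) then produces a subsequence of $(N,K)$, whose choice a priori depends on $\beta$, along which $\gamma_t^{*\beta,K}$ converges uniformly in $t$ to some curve $\gamma^{*\beta}\in AC([0,1];X)$ that is itself a minimizer of the continuum functional $\text{PPC}(\Lambda;\beta)$. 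I fix one such limit $\gamma^{*\beta}$ for every $\beta>0$.

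\textbf{Outer limit as $\beta\to 0$.} Since $\Lambda=\rho_{(\cdot)\#}\text{Leb}_{[0,1]}$ and $t\mapsto\rho_t$ is injective, Theorem \ref{thm:seriation-consistency} applies to the family $\{\gamma^{*\beta}\}_{\beta>0}$ constructed in the previous step: up to passage to a subsequence in $\beta$, one obtains $\gamma_t^{*\beta}\to\gamma_t^{*}$ uniformly in $t$, where $\gamma^{*}$ is an order-preserving or order-reversing reparametrization of $\rho$. Composing the two subsequence choices, first the outer subsequence in $\beta$ and then, within each selected $\beta$, the corresponding inner subsequence in $(N,K)$, gives exactly the claimed iterated-limit convergence $\gamma^{*\beta,K}\to\gamma^{*}$ uniformly in $t$.

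The main conceptual point is that the order of limits is essential: the outer appeal to Theorem \ref{thm:seriation-consistency} is only available for genuine continuum minimizers of $\text{PPC}(\Lambda;\beta)$, so the inner limit in $(N,K)$ must be performed first to produce those minimizers as the inputs to the outer argument. Beyond this bookkeeping, the proof needs no new estimates; the substantive work, in particular ruling out degenerate $\beta\to 0$ limits such as space-filling curves and identifying the limit as a monotone reparametrization of $\rho$, is already inside Theorem \ref{thm:seriation-consistency}, which is the genuine workhorse. I therefore anticipate no real obstacle beyond carefully tracking the two nested subsequences.
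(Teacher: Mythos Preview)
Your proposal is correct and is exactly the argument the paper intends: immediately before the corollary the authors write that it is obtained ``by combining Theorem \ref{thm:seriation-consistency} with our earlier discrete-to-continuum result from Theorem \ref{thm:discrete-to-continuum},'' and give no further proof. Your two-step chaining (first $N,K\to\infty$ at fixed $\beta$ via Theorem \ref{thm:discrete-to-continuum} to land on a continuum minimizer $\gamma^{*\beta}$, then $\beta\to 0$ via Theorem \ref{thm:seriation-consistency}) is precisely this combination.
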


Alternatively, one can replace minimizers of
$\mathrm{PPC}^K(\Lambda_N;\beta)$ with minimizers of the nonlocal
discrete objective $\mathrm{PPC}^K_w(\Lambda_N;\beta)$ in the
statement of the corollary above, and invoke Proposition
\ref{prop:discrete-nonlocal-to-continuum} instead of Theorem
\ref{thm:discrete-to-continuum}. Likewise the two passages to a
subsequence can be dropped if we consider the convergence of
$\gamma_{t}^{*\beta,K}$ to $\gamma^*_t$ in a slightly weaker topology,
such as the Hausdorff topology on the ranges of the curves
$\gamma_{t}^{*\beta,K}$.

By applying this corollary in the case where: $X=\mathcal{P}(V)$, $V$
is a compact convex domain in $\mathbb{R}^d$, $d=W_2$, and
$(\Lambda_N)_{N\in\mathbb{N}}$ is a sequence of doubly empirical
measures for $\Lambda$, then (thanks to Theorem
\ref{thm:iterated-glivenko-cantelli}) we recover a precise statement
of Theorem \ref{thm:main-intro} which was advertised in the
introduction. Likewise, employing Proposition \ref{prop:finite-reads}
in place of Theorem \ref{thm:iterated-glivenko-cantelli} above gives
us the version of Theorem \ref{thm:main-intro} where the empirical
data is measured imprecisely with finite reads.

\paragraph*{Ordering estimation}
The results above concern consistency at the level of estimation of
the curve $\rho$; we now explain how to use principal curves to
consistently assign an \emph{ordering} to points along $\rho$.

Suppose that $\Lambda_N$ is discrete and so supported on atoms
$\{x_n\}_{n=1}^N$ belonging to $X$. Let $\gamma^N$ be a principal
curve for $\Lambda_N$. Using $\gamma^N$, we can assign
\emph{projection pseudotimes} to the $x_n$'s, as follows: define
\begin{equation*}
  \hat{\tau}(x_n)\in\arg\min_{t\in [0,1]}d^2(x_n,\gamma_t^N).
\end{equation*}
In other words, we assign each data point $x_n$ a time label
$\hat{\tau}(x_n)$ by projecting $x_n$ onto $\gamma^N$ and using the
time argument of this projection. Note that in general this projection
is not unique, in which case we simply pick one of the projections
arbitrarily. In practice, $\hat{\tau}(x_n)$ will be approximated by
instead projecting onto a discrete approximation of $\gamma^N$, e.g. a
minimizer for $\mathrm{PPC}^K(\Lambda_N)$.

Now, consider the limiting case where $\Lambda_N \rightharpoonup^*
\Lambda$ and $\Lambda$ is supported on a ground truth curve $\rho_t$.
In this case, Theorem \ref{thm:seriation-consistency} tells us that by
then sending $\beta\rightarrow 0$, we extract a principal curve with
the same range as $\rho_t$. In this limiting case, the projection
distance from each point in the support of $\Lambda$ to the limiting
principal curve $\gamma$ is, of course, \emph{zero}. In other words,
in the limit our pseudotime assignment simply reads off the time
labels from $\gamma$, which is a monotone or reverse-monotone
time-reparametrization of the ground truth $\rho$.

We also have the following consistency result for the ordering
obtained from projection pseudotimes, which allows for $\beta$ to be
small but nonzero.

\begin{prop}
\label{prop:pseudotimes-consistency}
Under the same assumptions as Theorem \ref{thm:seriation-consistency},
fix $T$ distinct points $\{\rho_i\}_{i=1}^T$ along $\rho_t$ whose true
time labels $\{t_i\}_{i=1}^T$ are unknown. Let
$\gamma^{*\beta}\in\arg\min\textrm{PPC}(\Lambda;\beta)$, and let
$\hat{\tau}(\rho_i)\in\arg\min_{t\in[0,1]}d^2(\rho_i,\gamma^{*\beta}_t)$
be a projection pseudotime for $\rho_i$. Lastly, let $\beta_j
\rightarrow 0$ be a subsequence of $\beta$ converging to zero, along
which $\gamma^{*\beta}_t$ converges uniformly in $t$.

Then, for all sufficiently small $\beta_j>0$, it holds  that either:
\begin{enumerate}
  \item For all $1\leq i,i'\leq T$, $t_i < t_{i'} \iff \hat{\tau}
    (\rho_i) < \hat{\tau} (\rho_i')$, or
  \item For all $1\leq i,i'\leq T$, $t_i < t_{i'} \iff \hat{\tau}
    (\rho_i) > \hat{\tau} (\rho_{i'})$.
\end{enumerate}
In other words, the ordering given by $\hat{\tau}$ is correct up to
total reversal.
\end{prop}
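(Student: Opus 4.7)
The plan is to leverage Theorem~\ref{thm:seriation-consistency}, which along the given subsequence $\beta_j$ guarantees a limit curve $\gamma^* \in AC([0,1];X)$ that is either an order-preserving or order-reversing reparametrization of $\rho$. Write $\gamma^*_t = \rho_{\phi(t)}$ for a continuous monotone bijection $\phi : [0,1] \to [0,1]$; since $\rho$ is injective, $\gamma^*$ is as well. For each $i$, set $s_i = \phi^{-1}(t_i)$ so that $\gamma^*_{s_i} = \rho_i$, and observe that the ordering of the $s_i$'s agrees with the ordering of the $t_i$'s when $\phi$ is increasing, and is its reversal when $\phi$ is decreasing. Thus, once I show that the pseudotime $\hat\tau(\rho_i)$ is close to $s_i$ for small $\beta_j$, the conclusion will follow.

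The main technical step is to prove that $\hat\tau(\rho_i) \to s_i$ along the subsequence, for each fixed $i$. For the upper bound on the minimum projection distance, I would plug $t=s_i$ into the projection problem and use uniform convergence:
\[
  \min_{t\in[0,1]} d(\rho_i, \gamma^{*\beta_j}_t) \leq d(\rho_i,\gamma^{*\beta_j}_{s_i}) = d(\gamma^*_{s_i}, \gamma^{*\beta_j}_{s_i}) \xrightarrow[j\to\infty]{} 0.
\]
To conclude convergence of the minimizers, suppose toward contradiction that some subsequence $\hat\tau(\rho_i) \to s^*$ with $s^* \neq s_i$. A double application of the triangle inequality, together with uniform convergence $\gamma^{*\beta_j} \to \gamma^*$ and continuity of $\gamma^*$, shows that $d(\rho_i, \gamma^{*\beta_j}_{\hat\tau(\rho_i)}) \to d(\rho_i, \gamma^*_{s^*})$; combined with the previous display this forces $d(\rho_i, \gamma^*_{s^*}) = 0$. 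Injectivity of $\gamma^*$ then yields $s^* = s_i$, a contradiction. The argument is unaffected by non-uniqueness of the argmin, since it applies to any measurable selection.

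Finally, since the points $\rho_1,\ldots,\rho_T$ are distinct and $\gamma^*$ is injective, the values $s_1,\ldots,s_T$ are distinct; set $\delta = \tfrac{1}{3} \min_{i\neq i'} |s_i - s_{i'}| > 0$. By the convergence just established, for all sufficiently small $\beta_j$ we have $|\hat\tau(\rho_i) - s_i| < \delta$ for every $1 \leq i \leq T$, and hence the ordering of $\{\hat\tau(\rho_i)\}_i$ coincides with that of $\{s_i\}_i$. Depending on the monotonicity direction of $\phi$, this gives either case (1) or case (2) of the conclusion. The main obstacle I anticipate is ruling out \emph{spurious} limit projection points in step two: a priori, $\gamma^{*\beta_j}$ could wiggle in ways that create competing near-projections away from $s_i$, but the combination of the vanishing upper bound with injectivity of the limit $\gamma^*$ is exactly what forecloses this.
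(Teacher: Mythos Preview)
Your proof is correct and follows the same overall strategy as the paper's: invoke Theorem~\ref{thm:seriation-consistency} to obtain a limiting curve $\gamma^*$ which is an injective monotone reparametrization of $\rho$, show that the pseudotimes $\hat\tau(\rho_i)$ converge along the subsequence to the corresponding times $s_i$ in the $\gamma^*$-parametrization, and conclude that the ordering is eventually correct.

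The execution differs. The paper argues quantitatively: it uses the constant-speed parametrization of the limit together with uniform continuity of the inverse map $\gamma^*_t \mapsto t$ (obtained from compactness via Lemma~\ref{prop:inv-cont}) to produce explicit $\epsilon$--$\delta$ bounds, comparing an upper bound on $|\hat\tau_i - t_i|$ against a lower bound on $|t_i - t_j|$ coming from the separation $\min_{i\neq j} d(\rho_i,\rho_j)$. Your argument is purely topological: you use sequential compactness of $[0,1]$ plus injectivity of $\gamma^*$ to rule out any accumulation point of $\hat\tau(\rho_i)$ other than $s_i$, and then a simple $\delta$-separation argument on the finitely many $s_i$'s. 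Your route is a bit more streamlined and does not rely on the constant-speed hypothesis for the limit; the paper's route has the minor advantage of making explicit how small $\beta_j$ must be in terms of the separation of the $\rho_i$'s.
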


This result is derived directly from Theorem
\ref{thm:seriation-consistency}; a proof is provided in Appendix
\ref{subsec:proofs-for-seriation}.

\paragraph*{Time label estimation}
Suppose we wish to use the principal curve $\gamma^{*}$ to infer the
correct time labels along $\rho$, rather than merely the ordering.
Assume we have already queried an oracle for the labels of the
endpoints, so that we can put $\gamma_{0}^*=\rho_{0}$ and
$\gamma_{1}^*=\rho_{1}$. By convention, we take $\gamma^*$ to have
constant speed, but $\rho$ typically has unknown speed. Nonetheless,
for a particular sampling scheme for $\rho$, it is possible to infer
the correct time-reparametrization of $\gamma^*$ (i.e. so that
$\gamma_{t}^*=\rho_{t}$ for all $t$) as follows. Suppose that our
observations of $\rho$ are i.i.d. from $\Lambda$, and hence come in
the following form: first we draw $T$ i.i.d. random times uniformly on
$[0,1]$, then for each random time $\tau$ we observe $\rho_{\tau}$,
but the time label $\tau$ is unknown. In the limit where $T\rightarrow
\infty$ then $\beta\rightarrow 0$, each $\rho_{\tau}$ coincides with
some $\gamma_{t}^*$. Define the following estimated ordering on
$\rho$'s as follows:
\[
  \rho_{\tau}\preceq\rho_{\tau^{\prime}} \iff \rho_{\tau} = \gamma_{t},
  \rho_{\tau^{\prime}}=\gamma_{t^{\prime}},\text{ and }t\leq t^{\prime}.
\]
In other words we first endow the $\rho$'s with the ordering inherited
from $\gamma^*$. Label the $\rho_\tau$'s according to this ordering;
as the $\tau$'s are drawn uniformly, it is consistent to estimate the
true times of the $\rho_\tau$'s simply by assigning them time labels
which are evenly spaced in $[0,1]$, in order.

\subsection{Experiments}
\label{sec:experiments}

In this subsection, we test our approach to principal curve estimation
on data drawn from simulated curves in the Wasserstein space of
probability measures. Specifically we consider the data observation
scheme outlined in the introduction: given a synthetic curve of
probability measures $\rho$, we sample $N$ i.i.d. times $t_i$, and at
each time we observe an empirical measure $\hat\rho_{t_i}$ on $M$
datapoints drawn from the true measure $\rho_{t_i}$ at time $t_i$. We
then estimate the curve $\rho$ via the doubly empirical measure
$\hat\Lambda_{N,M}=\frac{1}{N}\sum_{i=1}^N\delta_{\hat\rho_{t_i}}$
which encodes the observed data. In practice, we compute an
approximate principal curve $\hat\gamma$ for $\hat\Lambda_{N,M}$ by
running Algorithm \ref{alg:coupled-lloyd}, but with
$\text{PPC}_{w}^{K}(\hat\Lambda_{N,M};\beta)$ with fixed endpoints as
the objective (see Algorithm \ref{alg:coupled-lloyd-nonlocal-fixed} in
Appendix \ref{sec:extensions} for an explanation of this variant
objective), and take $\hat\gamma$ to be a piecewise-geodesic
interpolation of the output. How ``fixed endpoints'' are incorporated
into the objective is detailed in Appendix \ref{sec:extensions}, but
this means, in particular, that we take as known which batch among the
$\hat\rho_{t_i}$'s is the \emph{start} and which is the \emph{end}.
This allows us to identify whether the forward or backward ordering
along the curve is the correct one, and represents a form of expert
knowledge in the biological domain application context.

To evaluate the performance of our approach, we view it purely as a
seriation method, and compare to two existing seriation methods that
can be used in $W_2$ space: the \emph{Traveling Salesman Problem}
(TSP) approach to seriation \cite{LAPORTE1978259_tsp_seriation}, and
\emph{spectral seriation} \cite{atkins_spectral_1998,
  spectral_seriation_fogel_alexandre,
  natik2021consistencyspectralseriation}, which we outline in Appendix
\ref{sec:extra-experiments}. All approaches take as input the matrix
$W=[W_2(\hat\rho_{t_i}, \hat\rho_{t_j})]_{i,j}$ of pairwise $W_2$
distances between empirical measures $\hat\rho_{t_i}$.

To quantify performance of each method, we use a loss on the space of
permutations of time labels. Specifically, we consider the error
metric
\[
  \mathcal{E}(\hat{\tau}) = \frac{2}{T(T-1)}
  \sum_{i<j} \mathbf{1}(\hat{\tau}(\hat\rho_{t_i}) >
  \hat{\tau}(\hat{\rho}_{t_j})),
\]
which computes the percentage of pairs of non-identical time-indices
whose order is reversed by the given pseudotime $\hat{\tau}$ for the
set of $\rho_{t_i}$'s. This metric is a normalized version of
\emph{Kendall's Tau distance} on the space of permutations; see
\cite{diaconis1977spearman} for a discussion of this metric and its
relation to other metrics on permutations.

\begin{figure}[H]
    \centering
    \includegraphics[width=\linewidth]{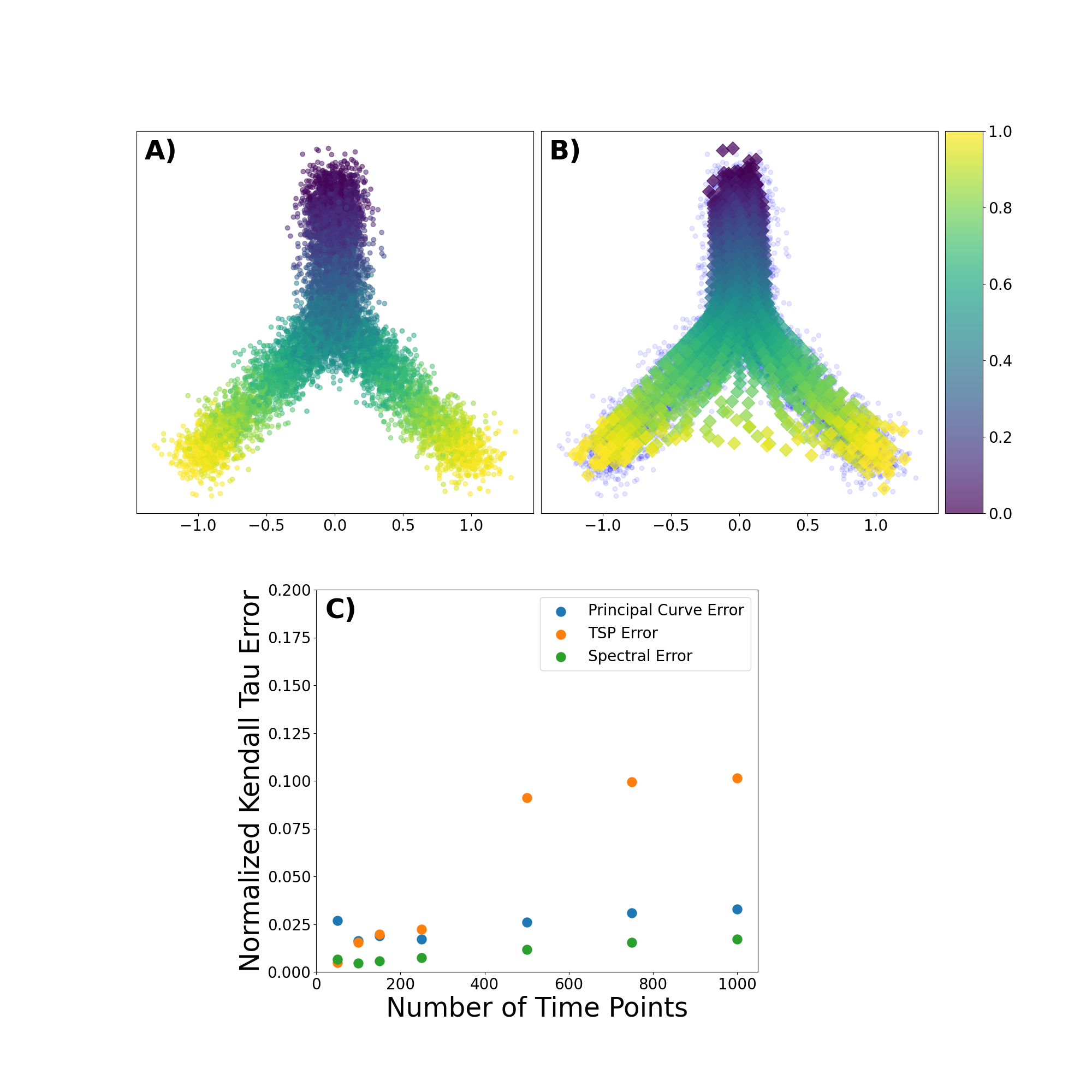}
    \caption{A simple curve of probability measures with 250 time
      points that undergoes a branching event (A) is fitted with a
      principle curve, using kernel bandwidth $h=0.037$ and length
      penalty $\beta=0.17$ (B). The color code bar to the right of
      panel (B) indicates the normalized time parameter for the
      underlying dataset (A) and the fitted principal curve (B)
      respectively. A performance comparison to other methods can be
      seen in (C), which indicates the seriation error for various
      choices of the number of time points, with a fixed budget of
      $10000$ total atoms measured. For spectral seriation, a kernel
      bandwidth of $\sigma = 0.5$ was used.}
    \label{fig:simple_branching_fit}
\end{figure}

\paragraph*{Test Dataset 1}
Our first test dataset mimics a cellular differentiation event with a
branching Gaussian mixture, obtained from a simple 2-D branching curve
convolved with a Gaussian. The curve $\rho_t$ is obtained by
convolving a curve of discrete measures $\mu_t$ with a Gaussian
$$\rho_t = \mathcal{N}(0,\sigma^2) * \mu_t,$$
where $\sigma=0.1$ and $\mu_t$ is defined as follows:
\begin{equation*}
  \mu_t = \begin{cases}
    \delta_{(0, 1 -t)} &  0 \leq t \leq 1\\
    \frac{1}{2} \delta_{\frac{t-1}{\sqrt{2}}(-1, -1)}+\frac 1 2
    \delta_{\frac{t-1}{\sqrt{2}}(1, -1)} & 1 \le t \leq 1 + \sqrt{2}
    \end{cases}
\end{equation*}

To produce the dataset, we uniformly sample times $\{t_i | t_i \in [0,
1 + \sqrt{2}]\}$; then for each $t_i$, we then draw i.i.d. samples
from $\rho_{t_i}$. The resulting dataset can be seen in Figure
\ref{fig:simple_branching_fit} panel (A).

A parameter sweep was first done on training data to find a performant
combination of length penalty term $\beta$ and kernel bandwidth
parameter $h$, the details of which can be found in Appendix
\ref{sec:extra-experiments}. On a test dataset with $250$ time points,
and $10,000$ atoms in total, we were able to achieve a mean Kendall
Tau error of $0.0157$ over $5$ trials. The estimated principal curve
for this dataset can be seen in Figure \ref{fig:simple_branching_fit},
panel (B).

We further generate a family of test datasets from the same model, in
the ``fixed budget'' situation where the number of timepoints varies,
but the total number of data points is fixed. This represents the
scenario where there is a budget to sequence a fixed number of cells,
and it is desired to find the largest number of time points that can
be useful. The results of these trials, along with comparisons with
alternative seriation methods, can be seen in Figure
\ref{fig:simple_branching_fit}, panel (C). For our principal curves
seriation method, we see good performance across many regimes, but are
marginally less performant than spectral seriation.

\begin{rem}
  We note that existing pseudotime inference methods do exist which
  assign orderings to datasets with branching in the feature domain
  \cite{saelens2019comparison}, but the branching structure leads to
  separate, incomparable pseudotimes along each branch. If we instead
  view the data in the space of probability measures over features, as
  we do here, we can instead infer a single curve (and ordering) in
  the Wasserstein space for the whole dataset. For branching data, our
  approach therefore has the benefit of assigning an ordering which is
  comparable across branches.
\end{rem}

\paragraph*{Test dataset 2}
Next, we present a more complex branching dataset, with an area of
rapid direction change that is poorly sampled. This curve $\rho_t$ is
again obtained by convolving a curve of discrete measures $\mu_t$ with
a Gaussian
\[
  \rho_t = \mathcal{N}(0,\sigma^2) * \mu_t,
\]
where $\sigma=0.1$, but here $\mu_t$ is defined as follows:
\begin{equation*}
    \mu_t = \begin{cases}
      \delta_{(0, 1-t)} & 0 \leq t \leq 1 \\
      \delta_{\frac{t-1}{0.1}(1.5, 0)} & 1 \le t \leq 1.1 \\
      \frac12 \delta_{(t-1.1) \big(-1, 1 \big) + (1.5, 0)} + \frac12
      \delta_{(t-1.1) \big (1, 1 \big ) + (1.5, 0)} & 1.1 \le t \leq
                                                      2.1. \\
    \end{cases}
\end{equation*}
In the biological context, this dataset represents a period of rapid
developmental change followed by a branching event. In time series
experiments, time points may not always be captured at a fine enough
time resolution to characterize such changes. This leads to segments
of the developmental curve having low numbers of observed time points
and thus breaks in continuity \cite{massri_single-cell_HE_2024}.

\begin{figure}[H]
    \centering
    \includegraphics[width=\linewidth]{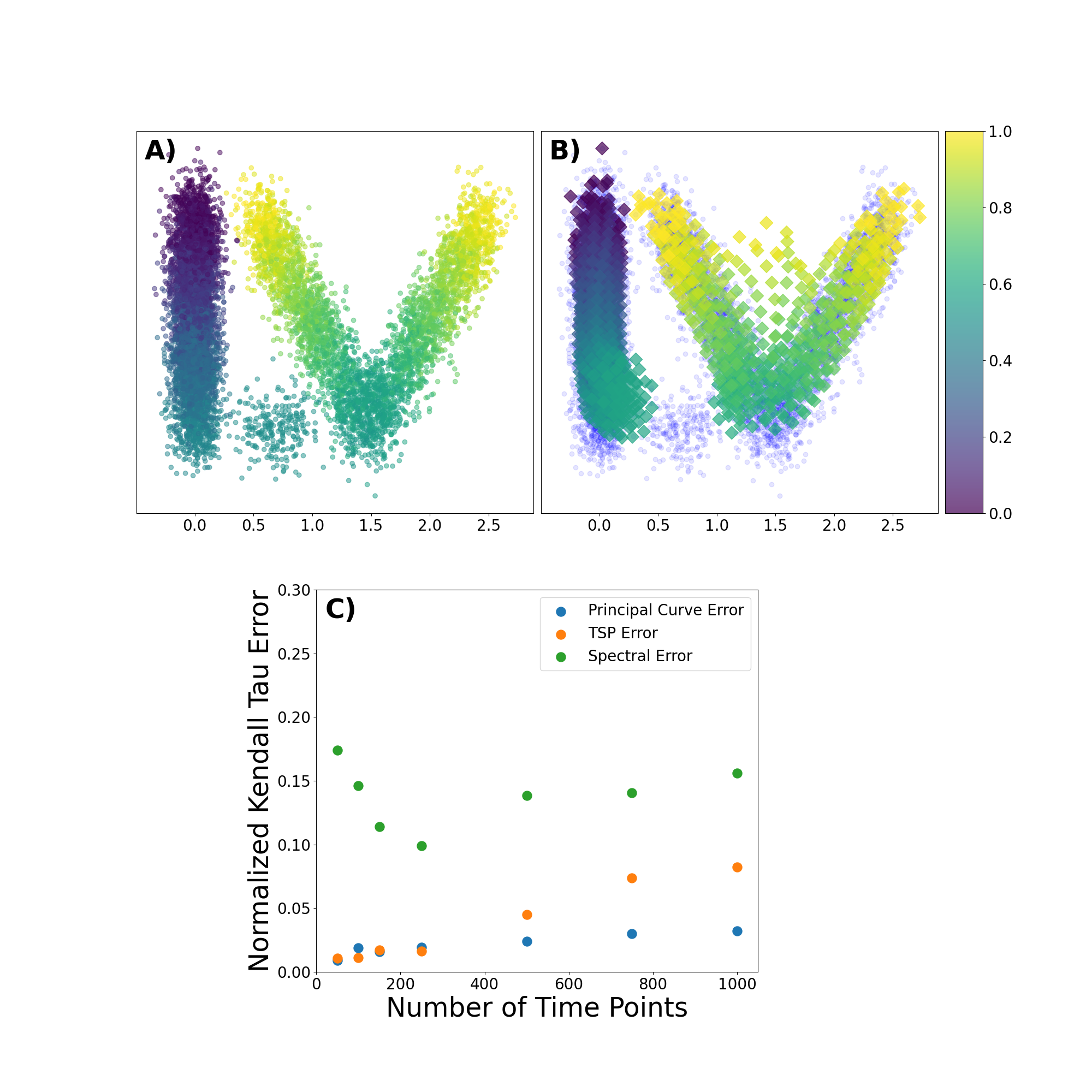}
    \caption{A curve in the space of probability measures with a
      non-trivial change in direction, and which undergoes a branching
      event (A) is fitted with a principle curve with $h=0.01$, and
      $\beta=0.037$ (B). The curve contains 250 time points. The
      principle curve seriation method can be seen to obtain the best
      performance for a number of regimes with varying time points and
      10000 total atoms (C). For spectral seriation, a kernel
      bandwidth $\sigma = 0.315$ was used.}
    \label{fig:bent_curve_panel}
\end{figure}

A test dataset drawn from this model can be seen in Figure
\ref{fig:bent_curve_panel} panel (A), with a principal curve fitted to
this dataset overlaid in panel (B); as with Test Dataset 1, a
parameter selection was first done using training data from the same
model, see Appendix \ref{sec:extra-experiments}. Note that the
estimated principal curve $\hat\gamma$ does not put any knots in the
low-density patch; in this region $\hat\gamma$ is therefore a
piecewise geodesic.

In particular, the estimated principle curve is still able to capture
the global structure of the dataset, despite the patch of missing/low
resolution data. This ability is reflected in the performance against
competing methods for this dataset (see Figure
\ref{fig:bent_curve_panel} panel (C)).

For this dataset, the performance of spectral seriation is
particularly poor. Indeed we cannot expect spectral methods to be
robust to a loss of data which results in changes to the large-scale
geometry of the dataset, as the spectrum of the graph or manifold
Laplacian is sensitive to changes in global geometric properties like
the number of connected components \cite{chung1997spectral,
  ng2001spectral, schiebinger2015geometry}. On the other hand, as
discussed in Subsection \ref{subsec:discrete-to-continuum}, our method
includes a relaxation of the traveling salesman problem, and so with
proper hyperparameter selection one should expect that the performance
of our method should be similar to or better than that of TSP. Indeed
this can be seen in both of our experiments.

\section{Discussion}

We have introduced principal curves in general metric spaces, which we
envision will enable high-frequency data collection for single-cell
time-courses. We propose an estimator and prove it is consistent for
recovering a curve of probability measures in Wasserstein space from
empirical samples. This can be interpreted as a one-dimensional
manifold learning problem, and is related to
seriation~\cite{shah2016stochastically,flammarion2019optimal} and
trajectory inference~\cite{lavenant2024towards}.

Ordering scRNA-seq datasets along a principal developmental curve can
be thought of as an {\em unsupervised} version of the trajectory
inference (TI) problem
\cite{schiebinger2021opinion,lavenant2024towards,chizat2022trajectory}.
In TI, one observes empirical marginals $\hat\rho_{t_i}$ from
independent snapshots of a developmental stochastic process $\rho$,
and one wishes to recover the {\em law on paths} for $\rho$.

In future work, a combined statistical analysis of our principal curve
problem and {trajectory inference} could shed light on optimal
experimental design. We anticipate there would be a statistical
trade-off between the sub-problems of principal curves (where more
samples per time-point are ideal), and trajectory inference (where
more time-points are ideal). We intend to investigate these
considerations as part of future work involving real biological data.

Finally, we conclude with a summary of several minor variants of the
principal curve problem, including {\em loops, multiple curves,} and
{\em spline interpolation}.

\subsection{Variants of principal curves}
In this section we consider several variants on the basic inference
problem discussed in the bulk of the article.

\emph{Loops}. In the case where $\Lambda$ is believed to be
concentrated near a \emph{loop, }i.e. a closed curve, it makes sense
to pose the principal curve problem over the space of closed curves,
that is, we minimize over the set
\[
  \{\gamma\in AC([0,1];X):\gamma_{0}=\gamma_{1}\}.
\]
While this situation is atypical for the case where $\Lambda$ comes
from the marginals of a drift-diffusion SDE, say, there are
circumstances in cellular dynamics/single-cell transcriptomics where
the cells follow an approximately periodic dynamics (in particular
``cell cycle'' \cite{liu2017reconstructing}), and so unsupervised
inference of a closed curve is required. Our variational framework can
easily accommodate this case: similarly to the case of fixed
endpoints, the set $\{\gamma\in AC([0,1];X):\gamma_{0}=\gamma_{1}\}$
is closed inside $AC([0,1];X)$. At the continuum level, one obtains
the optimization problem
\[
  \inf_{\gamma\in AC([0,1];X)} \left\{ \int_{X}d^{2}(x,\Gamma)
  d\Lambda(x) + \beta\text{Length}(\gamma) :
  \gamma_{0}=\gamma_{1}\right\};
\]
at the discrete level, where one has knots $\{\gamma_{k}\}_{k=1}^{K}$
instead of a continuous curve $\gamma$, we simply replace the
``discrete length'' $\sum_{k=1}^{K-1}d(\gamma_{k},\gamma_{k+1})$ with
$d(\gamma_{K},\gamma_{1})+\sum_{k=1}^{K-1}d(\gamma_{k},\gamma_{k+1})$,
and at each Lloyd-iteration we label the knots by computing a
Hamiltonian cycle through $\{\gamma_{k}\}_{k=1}^{K}$ rather than
solving the traveling salesman problem.

\emph{Multiple curves}. In the Euclidean setting,
\cite{kirov2017multiple} proposed a modification of the principal
curves problem where one optimizes over finite collections of curves
rather than a single curve, with a penalty on the number of curves. In
other words, one considers the optimization problem
\[
  \inf_{K\in\mathbb{N}, \{\gamma^{k}\}_{k=1}^{K} \in
    (AC([0,1];X))^{K}} \left\{ \left( \int_{X}d^{2} \left(x,
        \bigcup_{k=1}^{K} \Gamma^{k} \right) d\Lambda(x) +
      \beta\sum_{k=1}^{K} \text{Length}(\gamma^{k}) \right) +
    \beta^{\prime}K\right\}.
\]
Here $\bigcup_{k=1}^{K}\Gamma^{k}$ denotes the union of the graphs of
the curves $\gamma^{k}$. We remark that the penalty on
$\{\gamma^{k}\}$ (total length plus number of components) is distinct
from penalizing the total variation of the collection
$\{\gamma^{k}\}$, and this turns out to ensure that minimizers have a
finite number of components (which is not obvious with TV
penalization).

Briefly, \cite{kirov2017multiple} observe that proving the existence
of minimizers for the ``multiple penalized principle curves'' problem
is routine: indeed, along any minimizing sequence, the number of
components is necessarily bounded, so one can pass to the limit by
extracting a limiting curve for each component separately.
Additionally, \cite{kirov2017multiple} propose numerics for their
multiple curves problem which are similar to ours (albeit without
proof of consistency of the scheme). Here we simply point out that the
minimization problem and numerics described in
\cite{kirov2017multiple} also make sense in our more general setting
of a compact geodesic metric space, in particular their proof of
existence of minimizers can be translated to our setting without
modification.

\emph{Spline interpolation}. The algorithm we have proposed for
producing minimizers to the discrete variational problem
$\text{PPC}^{K}$ gives us a $K$-tuple of points
$\{\gamma_{k}\}_{k=1}^{K}$ as its output. If we prefer the output to
be an AC curve instead, one natural choice is to take a
piecewise-geodesic interpolation of $\{\gamma_{k}\}_{k=1}^{K}$.

In the case where $X$ is a Euclidean domain, we also have access to
splines as an alternate way to interpolate between the points
$\{\gamma_{k}\}_{k=1}^{K}$. Cubic splines, in particular, have the
interpretation \cite{de1963best} as curves of \emph{least total
  squared acceleration} fitting the prescribed points: that is, one
solves the optimization problem
\[
  \inf_{\gamma:\in C^{2}([0,1];X)}\left\{ \int|\ddot{\gamma}_{t}|^{2}
    dt : \gamma_{t_{k}}=\bar{\gamma}_{t_{k}}\right\}
\]
for prescribed points $\{\bar{\gamma}_{t_{k}}\}_{k=1}^{K}$. This
optimization problem can be posed more generally on a space where we
can make sense of the magnitude of acceleration $|\ddot{\gamma}_{t}|$
of a curve $\gamma$, for instance one can take $X$ to be a Riemannian
manifold. General geodesic metric spaces do not have enough
differential structure for $|\ddot{\gamma}_{t}|$ to be well-defined,
however.

Nonetheless, in the case of the space of probability measures equipped
with the 2-Wasserstein metric, which is of particular interest to us,
several versions of cubic splines have been proposed in the
literature, together with effective numerics \cite{benamou2019second,
  chen2018measure, chewi2021fast, justiniano2023approximation}. In
particular we direct readers to \cite{justiniano2023approximation}
which provides an up to date review of related literature.

\section*{Acknowledgements}
AW, AA and GS were supported by the Burroughs Welcome Fund and a CIHR
Project Grant. AW, AA, GS, and YHK were partially supported by the
Exploration Grant NFRFE-2019-00944 from the New Frontiers in Research
Fund (NFRF). FK was supported by the UBC Four Year Doctoral Fellowship
(4YF). AW, FK, and YHK also conducted portions of this research as
guests of the Stochastic Analysis and Application Research Center
(SAARC) at the Korea Advanced Institute of Science and Technology
(KAIST), and gratefully acknowledge SAARC's hospitality. AW thanks
Dejan Slep\v{c}ev for bringing the articles \cite{lu2016average,
  kirov2017multiple} to our attention, and thanks Danica Sutherland
for helpful discussions on reproducing kernel Hilbert spaces,
especially for directing us to \cite{wynne2022kernel}. AW additionally
thanks Filippo Santambrogio for helpful discussions regarding his
article \cite{iacobelli2019weighted}, in particular for explaining to
us that the results of that article would not be directly applicable
for our work. YHK is partially supported by the Natural Sciences and
Engineering Research Council of Canada (NSERC), with Discovery Grant
RGPIN-2019-03926. This research is also partially supported by the
Pacific Institute for the Mathematical Sciences (PIMS), through the
PIMS Research Network (PRN) program (PIMS-PRN01), the Kantorovich
Initiative (KI).

\bibliographystyle{plain}
\bibliography{references}

\begin{appendix}

\section{Properties of curves in metric spaces}\label{appn:cptness}

In this section, we collect some basic results about curves into
metric spaces. Many of them are straightforward, but for results where
we could not find a statement and proof elsewhere in the literature,
we include the proofs in order to make the article more
self-contained.

The following is a key lemma for us, and is a restatement of
\cite[Lemma 1.1.4(b)]{ambrosio2008gradient}.
\begin{lem}[Existence of constant speed reparametrization]
\label{lem:const-speed-reparam}
Let $X$ be a complete metric space. Let $\gamma\in AC([0,1];X)$. Then
there exists an increasing, absolutely continuous
$\varphi:[0,1]\rightarrow[0,1]$ and a $\hat{\gamma}\in AC([0,1];X)$
such that $\gamma=\hat{\gamma}\circ\varphi$ and
$|\dot{\hat{\gamma}}_{t}|=\text{Length}(\gamma)$ for almost all
$t\in[0,1]$.
\end{lem}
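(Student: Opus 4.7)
The plan is to build $\hat\gamma$ via arc-length reparametrization. First I would set $L := \mathrm{Length}(\gamma) = \int_0^1 |\dot\gamma_r|\,dr$ and dispatch the trivial case $L=0$: in that case the standard AC-curve bound $d(\gamma_s,\gamma_t) \le \int_s^t |\dot\gamma_r|\,dr$ forces $\gamma$ to be constant, so I can take $\hat\gamma := \gamma$ and $\varphi := \mathrm{id}$. Assuming $L>0$, I would define the normalized arc-length function
\[
\varphi(t) := \frac{1}{L}\int_0^t |\dot\gamma_r|\,dr,
\]
which is absolutely continuous (as an indefinite integral of an $L^1$ function) and nondecreasing, with $\varphi(0)=0$, $\varphi(1)=1$, $\varphi'(t)=|\dot\gamma_t|/L$ a.e.; by the intermediate value theorem its range is all of $[0,1]$.

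The crucial step is to define $\hat\gamma$ on $[0,1]$ so that $\hat\gamma\circ\varphi = \gamma$. I would set $\hat\gamma(\varphi(t)) := \gamma(t)$ and show this rule is consistent: if $\varphi(t_1)=\varphi(t_2)$ with $t_1<t_2$, then $\int_{t_1}^{t_2}|\dot\gamma_r|\,dr=0$, so the AC-curve bound gives $d(\gamma_{t_1},\gamma_{t_2})=0$, i.e.\ $\gamma$ is constant on $[t_1,t_2]$. Since $\varphi$ maps onto $[0,1]$, this well-defines $\hat\gamma$ on the entire interval $[0,1]$, with $\gamma = \hat\gamma\circ\varphi$ built into the construction.

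It remains to show that $\hat\gamma$ is absolutely continuous with constant metric speed $L$. For any $u = \varphi(s) < v = \varphi(t)$ in $[0,1]$ (so $s\le t$), I would estimate
\[
d(\hat\gamma_u,\hat\gamma_v) \;=\; d(\gamma_s,\gamma_t) \;\le\; \int_s^t |\dot\gamma_r|\,dr \;=\; L(\varphi(t)-\varphi(s)) \;=\; L(v-u),
\]
so $\hat\gamma$ is $L$-Lipschitz and hence AC with $|\dot{\hat\gamma}_\tau|\le L$ a.e. For the matching lower bound, length is invariant under continuous nondecreasing reparametrization (immediate from the supremum-over-partitions definition of length), so
\[
\int_0^1|\dot{\hat\gamma}_\tau|\,d\tau \;=\; \mathrm{Length}(\hat\gamma) \;=\; \mathrm{Length}(\gamma) \;=\; L,
\]
and combined with $|\dot{\hat\gamma}_\tau|\le L$ this forces $|\dot{\hat\gamma}_\tau|= L$ for a.e.\ $\tau$.

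The main obstacle is really the well-definedness of $\hat\gamma$ — one must rule out any phenomenon of $\gamma$ moving while $\varphi$ stagnates. This hinges entirely on the upgrade from absolute continuity of $\gamma$ to the pointwise bound $d(\gamma_s,\gamma_t)\le \int_s^t|\dot\gamma_r|\,dr$, which itself comes from a partition-refinement argument using the metric triangle inequality. Once that ingredient is in hand, every other step is a routine manipulation of the fundamental theorem of calculus for AC functions combined with the reparametrization invariance of metric length.
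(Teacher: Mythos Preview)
Your argument is correct and is precisely the standard arc-length reparametrization proof. Note that the paper does not actually prove this lemma: it is stated as a restatement of \cite[Lemma 1.1.4(b)]{ambrosio2008gradient} and no proof is given. The argument you outline is essentially the one in Ambrosio--Gigli--Savar\'e, so there is nothing to contrast. One small remark: the lemma as stated says $\varphi$ is ``increasing,'' but your $\varphi$ is only nondecreasing (and must be, whenever $|\dot\gamma|$ vanishes on a set of positive measure); this is a wording issue in the statement rather than a defect in your proof.
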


Another basic fact is the following version of the Arzel\`a-Ascoli
theorem, which is a consequence of \cite[Vol. 1, Ch. II, §18, Thm.
7]{kolmogorov1999elements}.
\begin{lem}[Arzel\`a-Ascoli]
  \label{thm:arzela-ascoli}
  Let $X$ and $Y$ be compact metric spaces. Let
  $\{f_{n}\}_{n\in\mathbb{N}}$ be a sequence of functions $f_n: X\to
  Y$, \emph{uniformly equicontinuous} with \emph{modulus of uniform
    continuity }$\zeta:\mathbb{R}\rightarrow\mathbb{R}$, meaning that:
  \[
    \forall\varepsilon>0, \forall n,\quad
    d_{X}(x,x^{\prime})<\zeta(\varepsilon)\implies
    d_{Y}(f_{n}(x),f_{n}(x^{\prime}))<\varepsilon.
  \]
  Then, there exists a subsequence $\{f_{n_{k}}\}$ converging
  uniformly to a continuous function $f:X\rightarrow Y$ with the same
  modulus of uniform continuity $\zeta$.
\end{lem}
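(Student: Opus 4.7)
The plan is to run a standard proof of Arzelà–Ascoli via separability plus Cantor diagonalization, followed by a three–epsilon argument to upgrade to uniform convergence. First, since $(X,d_X)$ is compact it is separable, so I would fix a countable dense set $\{x_j\}_{j\in\mathbb{N}}\subset X$. For each fixed $j$, the sequence $\{f_n(x_j)\}_{n\in\mathbb{N}}$ lies in the compact metric space $Y$ and therefore admits a convergent subsequence. Applying the Cantor diagonal construction, I would extract a single subsequence (still denoted $f_{n_k}$) for which $f_{n_k}(x_j)$ converges in $Y$ as $k\to\infty$ for every $j$, and define a map $f$ on $\{x_j\}_{j\in\mathbb{N}}$ by $f(x_j) := \lim_{k}f_{n_k}(x_j)$.

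The next step is to extend $f$ to all of $X$. For an arbitrary $x\in X$, I would pick $x_{j_\ell}\to x$ and use uniform equicontinuity together with the Cauchy property of $\{f_{n_k}(x_{j_\ell})\}_k$ in a standard three–epsilon argument to show that $\{f_{n_k}(x)\}_k$ is itself Cauchy in $Y$; the limit defines $f(x)$, and one checks the definition is independent of the chosen approximating sequence. To upgrade pointwise convergence on the dense set to uniform convergence on $X$, I would fix $\varepsilon>0$, cover $X$ by finitely many balls $B(x_{j_1},\zeta(\varepsilon/3)),\dots,B(x_{j_L},\zeta(\varepsilon/3))$ by compactness, use that $f_{n_k}(x_{j_\ell})\to f(x_{j_\ell})$ on these $L$ centers to make all such discrepancies $<\varepsilon/3$ for $k$ large, and use equicontinuity of both $f_{n_k}$ and $f$ to handle points away from the centers. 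Combining the three contributions yields $\sup_{x\in X}d_Y(f_{n_k}(x),f(x))<\varepsilon$ for $k$ sufficiently large.

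Finally, for the claim that the limit inherits modulus $\zeta$, I would pass to the limit in the defining inequality: whenever $d_X(x,x')<\zeta(\varepsilon)$ we have $d_Y(f_{n_k}(x),f_{n_k}(x'))<\varepsilon$ for all $k$, and continuity of the metric gives $d_Y(f(x),f(x'))\le \varepsilon$ in the limit. To recover a strict inequality with a \emph{bona fide} modulus $\zeta'$ one takes, e.g., $\zeta'(\varepsilon):=\zeta(\varepsilon/2)$, so that $d_X(x,x')<\zeta'(\varepsilon)$ implies $d_Y(f(x),f(x'))\le \varepsilon/2<\varepsilon$.

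The main ``obstacle'' here is mostly bookkeeping rather than substance: this is a classical result, and the only subtlety is that strict inequalities in the equicontinuity hypothesis become non-strict after passing to the limit of the $Y$–metric. This is painless to repair by the minor renormalization of $\zeta$ described above. All other steps (separability, diagonal extraction, three–epsilon argument) are routine.
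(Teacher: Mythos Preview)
Your argument is correct and is exactly the standard diagonalization-plus-three-$\varepsilon$ proof of Arzel\`a--Ascoli; your observation about the strict/non-strict inequality when passing to the limit is accurate and the renormalization $\zeta'(\varepsilon)=\zeta(\varepsilon/2)$ is a clean fix (the paper's formulation ``same modulus $\zeta$'' is slightly loose on exactly this point). The paper itself does not supply a proof at all: it simply states the lemma and cites \cite[Vol.~1, Ch.~II, §18, Thm.~7]{kolmogorov1999elements}, so there is nothing to compare against beyond noting that you have filled in what the paper outsources to a reference.
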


As an easy consequence of these two lemmas, we deduce the following
compactness result for curves in metric spaces:

\begin{lem}[Compactness and lower semicontinuity of length functional]
  \label{lem:length is lsc} (i) Let $X$ be a compact metric space. Let
  $n\in\mathbb{N}$, and for each $n$ let $\gamma^{n}\in AC([0,1];X)$
  be constant speed, in the sense that $|\dot{\gamma}_{t}^{n}| =
  \text{Length}(\gamma^{n})$ for almost all $t\in[0,1]$. Suppose that
  $\sup_{n\in\mathbb{N}} \text{Length}(\gamma^{n}) <\infty$. Then
  there exists some $\gamma^{*}\in AC([0,1];X)$ (but not necessarily
  constant speed) such that, up to a subsequence $n_{k}$, it holds
  that
  \[
    \gamma_{t}^{n_{k}} \rightarrow \gamma_{t}^{*} \text{ uniformly for
      all } t \in [0,1].
\]

(ii) Let $\gamma^{n}$ be a sequence in $AC([0,1];X)$ converging
uniformly in $t$ to some $\gamma^* \in AC([0,1];X)$. Then,
$\liminf_{n_{k}}\text{Length}(\gamma^{n_{k}})\geq\text{Length}(\gamma^{*})$.
\end{lem}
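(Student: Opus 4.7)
For part (i), the plan is to exploit the uniform Lipschitz bound implied by the constant-speed assumption and then apply the Arzel\`a-Ascoli lemma stated above. Specifically, set $L \coloneqq \sup_n \text{Length}(\gamma^n) < \infty$. Since $|\dot{\gamma}^n_t| = \text{Length}(\gamma^n) \leq L$ for a.e. $t$, for any $s < t$ in $[0,1]$ we have
\[
  d(\gamma^n_s, \gamma^n_t) \leq \int_s^t |\dot{\gamma}^n_r|\, dr \leq L|t-s|,
\]
so the family $\{\gamma^n\}$ is uniformly $L$-Lipschitz, hence uniformly equicontinuous with modulus $\zeta(\varepsilon) = \varepsilon/L$ independent of $n$. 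Since $X$ is compact, Lemma \ref{thm:arzela-ascoli} yields a subsequence $\gamma^{n_k}$ converging uniformly to some continuous $\gamma^* : [0,1] \to X$. Uniform convergence preserves the Lipschitz inequality pointwise, so $\gamma^*$ is itself $L$-Lipschitz, and in particular $\gamma^* \in AC([0,1];X)$.

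For part (ii), the plan is to pass through the \emph{total variation} representation of length, namely
\[
  \mathrm{Var}(\eta) \coloneqq \sup \left\{ \sum_{i=0}^{N-1} d(\eta_{t_i}, \eta_{t_{i+1}}) : 0 = t_0 < t_1 < \cdots < t_N = 1 \right\},
\]
which is well-defined for any continuous $\eta : [0,1] \to X$ and which coincides with $\text{Length}_d(\eta)$ when $\eta \in AC([0,1];X)$ (this is a standard identity in metric-space calculus; see, e.g., \cite[Thm.~2.7.6]{burago2001course} or \cite[Ch.~1]{ambrosio2008gradient}). The key observation is that $\mathrm{Var}$ is trivially lower semicontinuous under uniform convergence: for any fixed partition $0 = t_0 < \cdots < t_N = 1$, continuity of the distance function and uniform convergence give
\[
  \sum_{i=0}^{N-1} d(\gamma^*_{t_i}, \gamma^*_{t_{i+1}}) = \lim_{k\to\infty} \sum_{i=0}^{N-1} d(\gamma^{n_k}_{t_i}, \gamma^{n_k}_{t_{i+1}}) \leq \liminf_{k\to\infty} \mathrm{Var}(\gamma^{n_k}) = \liminf_{k\to\infty} \text{Length}(\gamma^{n_k}).
\]
Taking the supremum of the left-hand side over all partitions yields $\mathrm{Var}(\gamma^*) \leq \liminf_k \text{Length}(\gamma^{n_k})$, and since $\gamma^* \in AC([0,1];X)$ by hypothesis, $\mathrm{Var}(\gamma^*) = \text{Length}(\gamma^*)$, giving the desired inequality.

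Neither part presents a serious obstacle; the main technical ingredient in (ii) is simply the identification of length with total variation for AC curves, which is a standard fact in the metric-space calculus we are citing from \cite{ambrosio2008gradient, burago2001course}. If one wishes to be fully self-contained, the inequality $\mathrm{Var}(\eta) \leq \int_0^1 |\dot\eta_t|\,dt$ is immediate from the triangle inequality plus the fundamental theorem of calculus applied segmentwise, while the reverse direction uses the density of partition sums in the integral defining the metric speed.
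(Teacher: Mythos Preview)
Your proof is correct and follows essentially the same approach as the paper: for (i) both use the uniform Lipschitz bound from constant speed plus Arzel\`a--Ascoli, and for (ii) both rely on the identification of length with the supremum of partition sums, together with continuity of a fixed partition sum under uniform convergence. The only cosmetic difference is that the paper carries out the lower-semicontinuity estimate for (ii) via an explicit $\delta$--triangle-inequality argument rather than invoking the total-variation formula by name.
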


\begin{rem}
  This lemma can be rephrased as follows: if one considers the
  quotient space $AC([0,1];X)/\sim$ where $\sim$ denotes equality up
  to time reparametrization, then the functional
  $\text{Length(\ensuremath{\cdot)}}$ is lower semicontinuous and has
  compact sublevel sets on this quotient space.
\end{rem}
\begin{proof}
  (i) Since $|\dot{\gamma_{t}}^{n}|=\text{Length}(\gamma^{n})$ for
  almost all $t$, we have
  \[
    |t-s| < \delta \implies d_{X}(\gamma_{t}^{n},\gamma_{s}^{n}) <
    \delta \cdot \text{Length}(\gamma^{n}) \le \delta \cdot
    \sup_{n\in\mathbb{N}}\text{Length}(\gamma^{n}).
  \]
  Therefore, the fact that $\sup_{n\in\mathbb{N}}
  \text{Length}(\gamma^{n}) <\infty$ implies uniform equicontinuity of
  $\{\gamma^{n}\}$ with the Lipschitz constant $\sup_{n\in\mathbb{N}}
  \text{Length}(\gamma^{n})$. The existence of $\gamma^*$ now follows
  immediately from the Arzel\`a-Ascoli theorem \cref{thm:arzela-ascoli}.

  In particular, we have that $t\mapsto\gamma_{t}^{*}$ is Lipschitz
  with Lipschitz constant at most $\sup_{n\in\mathbb{N}}
  \text{Length}(\gamma^{n})$. Therefore $\gamma^{*} \in AC([0,1];X)$
  with metric speed and length at most $\sup_{n\in\mathbb{N}}
  \text{Length}(\gamma^{n})$. Note that the metric derivative
  $|\dot{\gamma}_{t}^{*}| \coloneqq \lim_{s\rightarrow
    t}\frac{d_{X}(\gamma_{t}^{*},\gamma_{s}^{*})}{|t-s|}$ exists for
  a.e. $t \in [0,1]$ by a version of Rademacher's theorem for
  functions taking values in a compact metric space (see \cite[Theorem
  4.1.6]{ambrosio2004topics}).

  (ii) Let $\gamma^{*}\in AC([0,1];X)$, and fix $\delta>0$. There
  exists a $K\in\mathbb{N}$ (depending on $\delta$) and a sequence of
  times $0=t_{0}<t_{1}<\ldots<t_{K-1}<t_{K}=1$ such that
  \[
    \sum_{k=0}^{K-1} d_{X}(\gamma_{t_{k}}^{*}, \gamma_{t_{k+1}}^{*}) >
    \text{Length}(\gamma^{*})-\delta.
  \]
  Next, consider a sequence $\gamma^{n}\rightarrow\gamma^{*}$
  converging uniformly in $t$. Pick $N\in\mathbb{N}$ such that for all
  $n \geq N$ we have $d_{X}(\gamma_{t}^{n},\gamma_{t}^{*})<\delta/K$;
  it follows that for all $n\geq N$,
  \begin{align*}
    \text{Length}(\gamma^{n})
    & \geq \sum_{k=0}^{K-1} d_{X}(\gamma_{t_{k}}^{n},
      \gamma_{t_{k+1}}^{n}) \\
    & \geq \sum_{k=0}^{K-1} \left[ d_{X}(\gamma_{t_{k}}^{*},
      \gamma_{t_{k+1}}^{*}) - d_{X}(\gamma_{t_{k}}^{*},
      \gamma_{t_{k}}^{n}) - d_{X}(\gamma_{t_{k+1}}^{*},
      \gamma_{t_{k+1}}^{n})\right]\\
    & > \text{Length}(\gamma^{*})-3\delta.
\end{align*}
Sending $\delta \rightarrow 0$ we conclude that
$\liminf_{n\rightarrow\infty} \text{Length}(\gamma^{n}) \geq
\text{Length}(\gamma^{*})$ as desired.
\end{proof}

Next we establish a compactness result for certain piecewise-geodesic
approximations of AC curves. In the context of the next lemma, we say
that, given $\gamma\in AC([0,1];X)$, a curve $\gamma^{K}\in
AC([0,1];X)$ is a \emph{K}th piecewise-geodesic approximation of
$\gamma$ provided that: for $t=0, \frac{1}{K}, \frac{2}{K}, ..., 1$,
$\gamma_{t}^{K}=\gamma_{t}$; and, for each interval $[\frac{k}{K},
\frac{k+1}{K}]$, the restriction $\gamma^{K}\llcorner[ \frac{k}{K},
\frac{k+1}{K}]$ is a constant speed geodesic connecting $\gamma_{k/K}$
and $\gamma_{(k+1)/K}$. (Note that we do not assume uniqueness of
geodesics; we only claim that, in any geodesic metric space, such
piecewise-geodesic approximations exist.) We state the lemma below for
constant-speed curves since this is the result we need for the bulk of
the article, but the same result holds for Lipschitz curves more
generally.
\begin{lem}
\label{lem:piecewise-geodesic}
Let $X$ be a geodesic metric space. Let $\gamma\in AC([0,1];X)$ be
constant-speed. For each $K\in\mathbb{N}$, let $\gamma^{K}$ be a $K$th
piecewise-geodesic approximation of $\gamma$. Then, as
$K\rightarrow\infty$, it holds that $\gamma^{K}$ converges uniformly
to $\gamma$.
\end{lem}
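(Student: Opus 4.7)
}

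The plan is to show that both $\gamma$ and every piecewise-geodesic approximation $\gamma^K$ are Lipschitz with a common Lipschitz constant, namely $L := \mathrm{Length}(\gamma)$; then the uniform estimate follows from a two-step triangle inequality at the nearest node $k/K$.

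First I would observe that since $\gamma$ is constant-speed AC, its metric speed equals $L$ almost everywhere, so $d(\gamma_s,\gamma_t) \le L|s-t|$ for all $s,t \in [0,1]$. Next, on each interval $[k/K,(k+1)/K]$, the restriction $\gamma^K\llcorner[k/K,(k+1)/K]$ is a constant-speed geodesic from $\gamma_{k/K}$ to $\gamma_{(k+1)/K}$, so its speed on that interval is
\[
  K \cdot d(\gamma_{k/K},\gamma_{(k+1)/K}) \;\le\; K \cdot L/K \;=\; L,
\]
using the Lipschitz bound on $\gamma$. Since the pieces agree at the nodes $k/K$ by construction, concatenation yields a globally $L$-Lipschitz curve $\gamma^K \in \mathrm{AC}([0,1];X)$.

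Now fix $t \in [0,1]$ and let $k = \lfloor Kt \rfloor$, so that $t \in [k/K,(k+1)/K]$ and $|t - k/K| \le 1/K$. Using $\gamma_{k/K}^K = \gamma_{k/K}$ together with the two Lipschitz bounds,
\[
  d(\gamma_t, \gamma_t^K)
  \;\le\; d(\gamma_t, \gamma_{k/K}) + d(\gamma_{k/K}^K, \gamma_t^K)
  \;\le\; L \cdot \frac{1}{K} + L \cdot \frac{1}{K}
  \;=\; \frac{2L}{K}.
\]
This bound is independent of $t$, so $\sup_{t\in[0,1]} d(\gamma_t,\gamma_t^K) \le 2L/K \to 0$ as $K \to \infty$, which is uniform convergence.

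There is no real obstacle here: the only points requiring any care are (i) confirming that the speed bound for the geodesic pieces comes from the Lipschitz constant of $\gamma$ (not from any constant-speed assumption on $\gamma^K$ as a whole, since $\gamma^K$ is only piecewise constant-speed), and (ii) using the node-matching identity $\gamma_{k/K}^K = \gamma_{k/K}$ to collapse the three-term triangle inequality down to two terms. Both are immediate from the definition of a $K$th piecewise-geodesic approximation.
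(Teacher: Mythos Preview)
Your proof is correct and is essentially the same argument as the paper's: both use the triangle inequality at the node $k/K$, together with the Lipschitz bounds $d(\gamma_t,\gamma_{k/K}) \le L/K$ and $d(\gamma_t^K,\gamma_{k/K}^K) \le L/K$ and the node-matching $\gamma_{k/K}^K = \gamma_{k/K}$, to obtain $d(\gamma_t,\gamma_t^K) \le 2L/K$. Your version is just more explicit in deriving the Lipschitz constant for the geodesic pieces.
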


\begin{proof}
  This follows easily from the triangle inequality, since for each
  $t\in[ \frac{k}{K}, \frac{k+1}{K})$, $d(\gamma_{t},\gamma_{k/K})
  \leq \frac{1}{K}\cdot\text{Length}(\gamma);$ $
  d(\gamma_{t}^{K},\gamma_{k/K}^{K}) \le \frac{1}{K}\cdot
  \text{Length}(\gamma) $; and $\gamma_{k/K}=\gamma_{k/K}^{K}$.
\end{proof}

The next lemma relates the Length functional and the $1$-dimensional
Hausdorff measure $\mathcal{H}^{1}$.
\begin{lem}
  \label{lem:hausdorff equals length}
  Let $X$ be a metric space and let $\gamma\in \text{AC}[0,1];X)$,
  with range $\Gamma$. Then $\text{Length}(\gamma) \geq
  \mathcal{H}^{1}(\Gamma)$, with equality if $\gamma$ is injective.
\end{lem}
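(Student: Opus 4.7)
The plan is to prove the inequality via a constant-speed reparametrization together with the standard Lipschitz contraction property of Hausdorff measure, and then to obtain equality in the injective case via a partition argument that reduces to a classical lower bound on the $\mathcal{H}^1$-measure of a connected set in terms of its diameter.

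First, I would establish the inequality $\text{Length}(\gamma)\geq\mathcal{H}^1(\Gamma)$. By Lemma \ref{lem:const-speed-reparam}, we may reparametrize so that $\gamma:[0,1]\to X$ has constant metric speed $L \coloneqq \text{Length}(\gamma)$; in particular $\gamma$ is $L$-Lipschitz, meaning $d(\gamma_s,\gamma_t)\leq L|s-t|$. Since reparametrization preserves both $\text{Length}(\gamma)$ and $\Gamma$, it suffices to argue in this setting. The standard fact that $L$-Lipschitz maps satisfy $\mathcal{H}^1(f(A))\leq L\cdot\mathcal{H}^1(A)$ (applied to $A=[0,1]$ with $\mathcal{H}^1([0,1])=1$) then gives $\mathcal{H}^1(\Gamma)\leq L=\text{Length}(\gamma)$.

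Next, I would handle the reverse inequality under the assumption that $\gamma$ is injective. Fix a partition $0=t_0<t_1<\ldots<t_n=1$ and let $\Gamma_i \coloneqq \gamma([t_i,t_{i+1}])$. Each $\Gamma_i$ is compact and connected, and the key sub-claim is that for any connected metric space $C$ containing points $x,y$, $\mathcal{H}^1(C)\geq d(x,y)$. This follows because the map $c\mapsto d(c,x)$ is $1$-Lipschitz from $C$ onto an interval containing $[0,d(x,y)]$ (by continuity and connectedness), and $1$-Lipschitz maps do not increase $\mathcal{H}^1$. Applying this to each $\Gamma_i$ with endpoints $\gamma_{t_i},\gamma_{t_{i+1}}$ yields $d(\gamma_{t_i},\gamma_{t_{i+1}})\leq \mathcal{H}^1(\Gamma_i)$. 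By injectivity of $\gamma$, the pieces $\Gamma_i$ intersect only in the single points $\gamma_{t_i}$, each of which has $\mathcal{H}^1$-measure zero, so $\sum_i \mathcal{H}^1(\Gamma_i)=\mathcal{H}^1(\Gamma)$, and therefore
\[
  \sum_{i=0}^{n-1} d(\gamma_{t_i},\gamma_{t_{i+1}}) \leq \mathcal{H}^1(\Gamma).
\]
Taking the supremum over all partitions gives $\text{Var}(\gamma)\leq \mathcal{H}^1(\Gamma)$, and for AC curves the total variation coincides with $\text{Length}(\gamma)$ (a standard fact recorded in \cite{ambrosio2008gradient}), yielding the matching inequality.

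The main subtle points will be verifying the connected-set lower bound $\mathcal{H}^1(C)\geq \text{diam}(C)$, which I would include inline as a two-line argument, and ensuring that the contribution of the countably many overlap points in the partition is genuinely $\mathcal{H}^1$-null, which is immediate. The Lipschitz contraction property of $\mathcal{H}^1$ and the identification of $\text{Length}$ with total variation are standard enough that I would simply cite \cite{ambrosio2004topics,ambrosio2008gradient}.
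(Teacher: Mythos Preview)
Your argument is correct. Both you and the paper begin by passing to the constant-speed reparametrization to reduce to the Lipschitz case; the difference lies in how the equality for injective curves is established. The paper simply cites the metric area formula \cite[Theorem 4.4.2]{ambrosio2004topics} as a black box, whereas you give a direct, self-contained argument: the $1$-Lipschitz projection $c\mapsto d(c,x)$ yields $\mathcal{H}^1(\Gamma_i)\geq d(\gamma_{t_i},\gamma_{t_{i+1}})$ on each connected arc, injectivity makes the arcs disjoint up to $\mathcal{H}^1$-null endpoints, and taking the supremum over partitions recovers the total variation, which equals $\text{Length}(\gamma)$ for AC curves. Your route is more elementary and avoids invoking the full area formula, at the cost of a few extra lines; the paper's route is shorter but relies on a deeper cited result. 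One small point worth making explicit in your write-up is that the images $\gamma((t_i,t_{i+1}))$ are Borel (as differences of compact sets), so that countable additivity of $\mathcal{H}^1$ genuinely applies.
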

\begin{proof}
  This is established in the case where $\gamma$ is Lipschitz by
  combining \cite[Theorem 4.1.6]{ambrosio2004topics} and \cite[Theorem
  4.4.2]{ambrosio2004topics}. The same holds more generally by
  replacing $\gamma$ with its constant speed (hence 1-Lipschitz)
  reparametrization, and observing that length, 1d Hausdorff measure,
  and injectivity are all preserved under this reparametrization.
\end{proof}

We state the following two basic analysis results without proof.

\begin{lem} \label{prop:inv-cont}
  Let $(X,\mathscr{T}_X)$ be compact and $(Y, \mathscr{T}_Y)$ be
  Hausdorff. Let $f : X \to Y$ be continuous and injective. Then
  $f^{-1} : f(X) \to X$ is continuous.
\end{lem}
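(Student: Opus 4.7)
The plan is to invoke the standard characterization of continuity in terms of closed sets: a map $g : f(X) \to X$ is continuous if and only if the preimage $g^{-1}(C)$ of every closed set $C \subseteq X$ is closed in $f(X)$. Applied to $g = f^{-1}$, this preimage equals $f(C)$, so it suffices to show that $f(C)$ is closed in $f(X)$ whenever $C$ is closed in $X$.

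First I would observe that any closed subset $C$ of the compact space $X$ is itself compact. Then, since $f$ is continuous, the image $f(C)$ is a compact subset of $Y$. This is the first of two ingredients, and uses only the compactness of $X$ and the continuity of $f$; injectivity has not yet been used.

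Next I would use the hypothesis that $Y$ is Hausdorff: in a Hausdorff space, every compact subset is closed. Consequently $f(C)$ is closed in $Y$, and in particular closed in the subspace $f(X)$. By the characterization recalled at the start, $f^{-1}$ is therefore continuous on $f(X)$. The role of injectivity here is simply to ensure that $f^{-1}: f(X) \to X$ is a well-defined function in the first place; no part of the topological argument requires injectivity beyond this.

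There is no real obstacle: the result is completely standard, and each step is an immediate application of a textbook lemma (closed subset of compact is compact; continuous image of compact is compact; compact subset of Hausdorff is closed). The only thing to be slightly careful about is to phrase the closedness of $f(C)$ relative to the subspace $f(X)$ rather than all of $Y$, since continuity of $f^{-1}$ is a statement about $f(X)$ with its subspace topology.
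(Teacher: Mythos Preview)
Your argument is correct and is the standard textbook proof: closed subsets of compact spaces are compact, continuous images of compact sets are compact, and compact subsets of Hausdorff spaces are closed, so $f$ is a closed map and $f^{-1}$ is continuous. Note that the paper itself does not supply a proof of this lemma; it explicitly states the result without proof as a basic analysis fact, so there is nothing to compare against.
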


\begin{lem} \label{prop:inj-mon}
  Suppose $\varphi : [0,1] \to [0,1]$ is a continuous, injective
  function. Then $\varphi$ is strictly monotone.
\end{lem}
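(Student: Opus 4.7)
The plan is to prove this classical fact by contradiction, leveraging the Intermediate Value Theorem. The key intermediate claim I would establish is a \emph{betweenness property}: for any three points $a<b<c$ in $[0,1]$, the value $\varphi(b)$ lies strictly between $\varphi(a)$ and $\varphi(c)$. Suppose this fails; then either $\varphi(b)>\max(\varphi(a),\varphi(c))$, or $\varphi(b)<\min(\varphi(a),\varphi(c))$. In the first case, choose any $y\in(\max(\varphi(a),\varphi(c)),\varphi(b))$. Since $\varphi$ is continuous with $\varphi(a)<y<\varphi(b)$, the IVT applied on $[a,b]$ yields $x_1\in(a,b)$ with $\varphi(x_1)=y$; likewise the IVT on $[b,c]$ yields $x_2\in(b,c)$ with $\varphi(x_2)=y$. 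Then $x_1<b<x_2$ are distinct preimages of $y$, contradicting injectivity. The other case is symmetric (swap $\max$ and $\min$, etc.).

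Next, I would handle the global monotonicity. Since $\varphi$ is injective and $0\neq 1$, we have $\varphi(0)\neq\varphi(1)$; without loss of generality assume $\varphi(0)<\varphi(1)$ (else replace $\varphi$ by $1-\varphi$, reversing the roles of increasing and decreasing). The goal is to show that $a<b$ in $[0,1]$ implies $\varphi(a)<\varphi(b)$. This splits into a handful of routine subcases based on whether $a=0$, $b=1$, or $0<a<b<1$, each resolved by one or two applications of the betweenness property to a suitable triple that includes $0$ and/or $1$. For instance, in the interior case $0<a<b<1$, betweenness applied to $0<b<1$ gives $\varphi(0)<\varphi(b)<\varphi(1)$, and betweenness applied to $0<a<b$ forces $\varphi(a)\in(\varphi(0),\varphi(b))$, hence $\varphi(a)<\varphi(b)$.

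There is no real obstacle here; the proof is a standard exercise in point-set topology of the real line. The only step that requires any thought is the betweenness claim, which combines the IVT (requiring continuity) with injectivity to rule out a ``fold''; once that is in hand, the monotonicity conclusion is a bookkeeping exercise across the boundary cases.
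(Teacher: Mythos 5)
The paper explicitly states this lemma (and the preceding Lemma~\ref{prop:inv-cont}) ``without proof,'' treating it as a standard fact of one-variable real analysis. Your argument is correct and complete: the betweenness claim is the right intermediate step (injectivity rules out equality of the three values, so failure of betweenness forces a strict max or min at $b$, and the IVT then produces two distinct preimages of any value chosen strictly between that extremum and the larger of $\varphi(a),\varphi(c)$, or symmetrically the smaller, contradicting injectivity), and the reduction to finitely many boundary cases after normalizing $\varphi(0)<\varphi(1)$ is routine and handled correctly. Since the paper offers no proof to compare against, there is nothing further to reconcile; your write-up would serve as a valid self-contained proof if the authors chose to include one.
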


Next we show that injective reparametrization does not change the
length of an AC curve. This is basically routine, but again we were
unable to find an appropriate reference.
\begin{lem} \label{lem:reparam}
  Suppose $f \in \mathrm{AC}([0,1]; X)$ and $\varphi : [0,1] \to
  [0,1]$ is a continuous, injective function. Then $f \circ \varphi
  \in \mathrm{AC}([0,1]; X)$. Moreover, if $\varphi$ is a bijection,
  then $\mathrm{Length}(f) = \mathrm{Length}(f \circ \varphi)$.
\end{lem}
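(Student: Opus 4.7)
The plan is to reduce to the monotone case via Lemma~\ref{prop:inj-mon}, establish $\mathrm{AC}$ membership by composing with a constant-speed reparametrization of $f$, and then obtain length equality (when $\varphi$ is a bijection) from the supremum-over-partitions characterization of length.

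First, by Lemma~\ref{prop:inj-mon}, $\varphi$ is strictly monotone; I take $\varphi$ strictly increasing (the decreasing case is symmetric). Thus $\varphi:[0,1]\to[\varphi(0),\varphi(1)]$ is a continuous bijection onto its image, with continuous inverse by Lemma~\ref{prop:inv-cont}. To establish $f\circ\varphi\in \mathrm{AC}([0,1];X)$, I apply Lemma~\ref{lem:const-speed-reparam} to write $f = \hat f\circ\psi$, where $\hat f$ has constant metric speed $L = \mathrm{Length}(f)$ (and hence is $L$-Lipschitz) and $\psi:[0,1]\to[0,1]$ is classically absolutely continuous and increasing. Then $f\circ\varphi = \hat f\circ\chi$, with $\chi := \psi\circ\varphi$ continuous and monotone increasing. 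By Lebesgue's differentiation theorem for monotone functions, $\chi$ is differentiable a.e. with $\chi'\in L^1([0,1])$.

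To check existence of the metric speed of $\hat f\circ\chi$ almost everywhere, I consider two cases at a point $s_0$ of differentiability of $\chi$. If $\chi'(s_0)=0$, the Lipschitz bound $d(\hat f(\chi(s)),\hat f(\chi(s_0)))\le L\,|\chi(s)-\chi(s_0)|$ forces the defining limit to exist and vanish. If $\chi'(s_0)>0$, I factor
\[
  \frac{d(\hat f(\chi(s)),\hat f(\chi(s_0)))}{|s-s_0|}
  = \frac{d(\hat f(\chi(s)),\hat f(\chi(s_0)))}{|\chi(s)-\chi(s_0)|}\cdot\frac{|\chi(s)-\chi(s_0)|}{|s-s_0|},
\]
whose second factor tends to $\chi'(s_0)$ and whose first factor tends to $|\dot{\hat f}|_{\chi(s_0)}=L$, provided $\chi(s_0)$ lies in the full-measure set where $|\dot{\hat f}|$ exists. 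The resulting metric speed equals $L\chi'(s_0)\in L^1$, confirming $f\circ\varphi\in \mathrm{AC}([0,1];X)$. For the length equality when $\varphi$ is a bijection of $[0,1]$, I invoke the supremum-over-partitions characterization $\mathrm{Length}(\gamma)=\sup_P\sum_i d(\gamma(t_i),\gamma(t_{i+1}))$ valid for AC curves: a continuous monotone bijection $\varphi$ induces a bijection on partitions of $[0,1]$ that preserves each partition sum, so the two suprema coincide.

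The main technical obstacle is that $\chi$ may fail to be classically absolutely continuous and so may not map Lebesgue-null sets to Lebesgue-null sets. Concretely, letting $N\subseteq[0,1]$ denote the null set where $|\dot{\hat f}|$ does not exist, $\chi^{-1}(N)$ need not be null. This is resolved by the continuous-monotone change-of-variables bound $\int_A \chi'(s)\,ds \le \mathrm{Leb}(\chi(A))$, valid for every Borel $A\subseteq[0,1]$, which forces $\chi'=0$ a.e. on $\chi^{-1}(N)$; hence the chain-rule analysis above is valid on a full-measure subset of $[0,1]$.
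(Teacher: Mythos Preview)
Your route to the $\mathrm{AC}$ claim is genuinely different from the paper's. The paper works directly from the $\varepsilon$--$\delta$ definition, pushing a finite family of disjoint intervals through $\varphi$ via uniform continuity; you instead factor $f=\hat f\circ\psi$ with $\hat f$ Lipschitz (Lemma~\ref{lem:const-speed-reparam}) and compute the metric speed of $f\circ\varphi=\hat f\circ\chi$ by a chain rule, handling the null exceptional set for $|\dot{\hat f}|$ via the monotone change-of-variables inequality. Your length-equality argument (bijection of partitions preserving partition sums) is exactly the paper's.

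There is, however, a coherence gap. Your first step establishes only that $|\dot{(f\circ\varphi)}|$ exists a.e.\ and is integrable; this matches the definition stated in Section~\ref{sec:basics} but is \emph{strictly weaker} than the classical $\varepsilon$--$\delta$ notion, and the identity $\mathrm{Length}(\gamma)=\sup_P\sum d(\gamma(t_i),\gamma(t_{i+1}))$ you invoke for the second step coincides with $\int_0^1|\dot\gamma_t|\,dt$ only for curves that are AC in the classical sense. Concretely, with $X=[0,1]$, $f=\mathrm{id}$, and $\varphi(t)=\tfrac12(t+c(t))$ for $c$ the Cantor function, $f\circ\varphi=\varphi$ has metric speed $\tfrac12$ a.e., so $\int_0^1|\dot{(f\circ\varphi)}|=\tfrac12\neq 1=\mathrm{Length}(f)$, and $\varphi$ is not classically AC. (The paper's own argument shares this difficulty: the choice $\varepsilon'=\delta/k$ there depends on the number $k$ of intervals, so no single $\delta'$ works for all finite families.) The clean repair is to add the hypothesis that $\varphi$ itself be absolutely continuous; then $\chi=\psi\circ\varphi$ is classically AC, and your chain-rule formula already yields $\mathrm{Length}(f\circ\varphi)=L\int_0^1\chi'=L(\chi(1)-\chi(0))=L$ directly, with no need for the partition argument.
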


\begin{proof}
  Let $\varepsilon > 0$. Since $f$ is AC, there exists $\delta > 0$
  such that for all finite sequences of pairwise disjoint intervals
  $[a_1, b_1], \ldots, [a_k, b_k] \subseteq [0,1]$ such that
  \[
    \sum_j |b_j - a_j| < \delta
  \]
  one has
  \[
    \sum_j d(f(b_j), f(a_j)) < \varepsilon.
  \]
  Define $\varepsilon' = \delta/k$. Note that $\varphi$ is a
  continuous function on a compact set and hence uniformly continuous.
  Therefore there exists a $\delta' > 0$ such that for all $a,b \in
  [0,1]$ we have
  \[
    |b-a| < \delta' \implies |\varphi(b) - \varphi(a)| < \varepsilon'.
  \]
  Now, select an arbitrary sequence of pairwise disjoint intervals
  $[a_1, b_1], \ldots, [a_k, b_k] \subseteq [0,1]$ such that
  \[
    \sum_j |b_j - a_j| < \delta'.
  \]
  Since $\varphi$ is a continuous, injective function, by
  \cref{prop:inj-mon} it is strictly monotone. Without the loss of
  generality we suppose $\varphi$ is strictly increasing. Then the
  intervals $[\varphi(a_1), \varphi(b_1)], \ldots, [\varphi(a_k),
  \varphi(b_k)]$ are nonempty, and (by injectivity of $\varphi$)
  remain pairwise disjoint. Then by uniform continuity of $\varphi$,
  \[
    \sum_j |\varphi(b_j) - \varphi(a_j)| < k\varepsilon' = \delta,
  \]
  whence
  \[
    \sum_j d(f(\varphi(b_j)), f(\varphi(a_j))) < \varepsilon,
  \]
  so $f \circ \varphi$ is absolutely continuous as desired.

  Now, further suppose $\varphi$ is a bijection. Let $I(N)$ denote the
  set of all partitions of $[0,1]$ according to $0 = t_0 < t_1 <
  \cdots < t_{N-1} < t_N = 1$. Note that by bijectivity and
  strictly-increasing-ness of $\varphi$ we have
  \[
    \varphi(I(N)) = I(N) = \varphi^{-1}(I(N)),
  \]
  thus
  \[
    \sup_{N} \sum_{j=1}^N d(f(\varphi(t_j)), f(\varphi(t_{j-1})))
    = \sup_N \sum_{j=1}^N d(f(t_j), f(t_{j-1})),
  \]
  in particular $\mathrm{Length}(f) = \mathrm{Length}(f \circ
  \varphi)$, as desired.
\end{proof}
Related to the previous lemma, injective reparametrization gives the
minimum arc-length. The argument is again routine, but we provide an
explicit argument to make the article self-contained.
\begin{lem} \label{lem:arclen-most-efficient}
  Let $f \in \mathrm{AC}([0,1]; X)$ be injective and denote
  \[
    \Phi(f) = \{g \in \mathrm{AC}([0,1]; X) \mid g \text{ is
      constant speed and } g([0,1]) = f([0,1])\}.
  \]
  Let $\hat{f}$ denote the constant speed reparametrization of $f$.
  Then $\mathrm{Length}(g) = \mathrm{Length}(f)$ iff $g$ is injective,
  whence (up to a possible time reversal) we have $g = \hat{f}$. In
  particular, $\hat{f}$ (and its time reversal) are the only elements
  of $\Phi(f)$ such that
  \[
    \mathrm{Length}(\hat{f}) = \inf_{g \in \Phi(f)} \mathrm{Length}(g).
  \]
\end{lem}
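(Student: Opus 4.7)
The plan is to prove this in four linked steps, each leveraging Lemma \ref{lem:hausdorff equals length}, which states $\mathrm{Length}(\gamma) \geq \mathcal{H}^1(\Gamma)$ for AC $\gamma$, with equality when $\gamma$ is injective.

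First, I would observe that $\hat{f}$ (which exists by Lemma \ref{lem:const-speed-reparam}) and its time reversal $t \mapsto \hat{f}(1-t)$ both lie in $\Phi(f)$; by Lemma \ref{lem:reparam} they satisfy $\mathrm{Length}(\hat{f}) = \mathrm{Length}(f)$. For \emph{any} injective $g \in \Phi(f)$, Lemma \ref{lem:hausdorff equals length} immediately gives
\[
\mathrm{Length}(g) = \mathcal{H}^1(g([0,1])) = \mathcal{H}^1(f([0,1])) = \mathrm{Length}(f),
\]
which handles the ``if'' direction of the equivalence.

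The main obstacle is the converse: showing strict inequality $\mathrm{Length}(g) > \mathrm{Length}(f)$ for non-injective $g \in \Phi(f)$. The geometric idea is that such a $g$ must double-cover some sub-arc of $\Gamma := f([0,1])$. In detail, pick $s < t$ with $g(s) = g(t) =: p$; since $g$ is constant speed with positive length (the zero-length case is trivial), $g$ is not constant on $[s,t]$, so some $u \in (s,t)$ has $g(u) =: q \neq p$. By Lemma \ref{prop:inv-cont}, $f^{-1}$ is continuous, so $\Gamma$ is homeomorphic to $[0,1]$ and admits a well-defined sub-arc $\Gamma_{p,q}$ joining $p$ and $q$. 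Now $g([s,u])$ and $g([u,t])$ are each connected subsets of $\Gamma$ containing both $p$ and $q$, hence each contains the entire $\Gamma_{p,q}$. Using additivity of length over the partition $\{[0,s],[s,u],[u,t],[t,1]\}$, applying Lemma \ref{lem:hausdorff equals length} piecewise, and noting that $\Gamma_{p,q}$ is covered at least twice in the union $\bigcup_i g([t_{i-1},t_i]) = \Gamma$, I obtain
\[
\mathrm{Length}(g) \geq \mathcal{H}^1(\Gamma) + \mathcal{H}^1(\Gamma_{p,q}) > \mathrm{Length}(f),
\]
since $\mathcal{H}^1(\Gamma_{p,q}) \geq d(p,q) > 0$. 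This also shows the infimum $\mathrm{Length}(f)$ is attained precisely by injective elements.

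For the uniqueness claim, given injective constant-speed $g \in \Phi(f)$, I would set $\psi := \hat{f}^{-1} \circ g$. Lemmas \ref{prop:inv-cont} and \ref{prop:inj-mon} make $\psi$ a strictly monotone continuous bijection $[0,1] \to [0,1]$. For any $0 \leq t_1 < t_2 \leq 1$, the restrictions $g|_{[t_1,t_2]}$ and $\hat{f}$ restricted to $[\min\{\psi(t_1),\psi(t_2)\}, \max\{\psi(t_1),\psi(t_2)\}]$ share the same range and are each injective, so Lemma \ref{lem:hausdorff equals length} forces their lengths to coincide. By constant speed with common value $L := \mathrm{Length}(f)$, this equality reads $L(t_2 - t_1) = L|\psi(t_2) - \psi(t_1)|$. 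If $L>0$ this forces $\psi$ to be either the identity or $t \mapsto 1-t$; the $L=0$ case (where $f$ is constant) is trivial. Combining all three cases yields the final minimization statement.
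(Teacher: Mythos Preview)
Your proof is correct and takes a genuinely different route from the paper's. The paper argues by three exhaustive cases: (1) $g$ injective, handled via Lemma~\ref{lem:reparam} with the reparametrization $h = g^{-1}\circ f$; (2) $g$ non-injective with $\{g(0),g(1)\}\neq\{f(0),f(1)\}$, where one shows the curve spends positive length before ever reaching an endpoint of $\Gamma$; and (3) $g$ non-injective with matching endpoints, where one excises a loop and compares to a modified curve $\tilde g$. In both non-injective cases the paper bounds $\mathrm{Length}(f)$ below using the surjectivity of $f^{-1}\circ g$ (or $f^{-1}\circ\tilde g$) onto $[0,1]$.

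By contrast, you use Lemma~\ref{lem:hausdorff equals length} as the unifying tool throughout. The ``if'' direction becomes a one-line $\mathcal H^1$ computation, and the ``only if'' direction is handled in a single unified case via your double-covering observation: any non-injective constant-speed $g$ traverses some sub-arc $\Gamma_{p,q}$ at least twice, and summing $\mathcal H^1$ over the pieces of the partition gives $\mathrm{Length}(g)\geq \mathcal H^1(\Gamma)+\mathcal H^1(\Gamma_{p,q})$ directly. (It may be worth making the inclusion--exclusion step $\mathcal H^1(A_2)+\mathcal H^1(A_3)=\mathcal H^1(A_2\cup A_3)+\mathcal H^1(A_2\cap A_3)\geq \mathcal H^1(A_2\cup A_3)+\mathcal H^1(\Gamma_{p,q})$ explicit, but the idea is clear.) Your uniqueness argument is also sharper: where the paper appeals informally to ``the definition of the constant-speed parametrization'', you derive $|\psi(t_2)-\psi(t_1)|=t_2-t_1$ from equality of arc-lengths and conclude $\psi\in\{\mathrm{id},\,1-\cdot\}$ rigorously. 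Overall your approach is shorter, avoids the endpoint case split, and leans on the cleaner measure-theoretic identity rather than ad hoc surgery.
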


\begin{proof}
  Let $g \in \Phi(f)$ be arbitrarily chosen. We proceed by casework.
  \begin{enumerate}
    \item Suppose $g$ is injective. We want to show that $g = \hat{f}$
      up to time-reversal. To that end, by \cref{prop:inv-cont},
      $g^{-1}$ is well-defined and continuous. So $h : [0,1] \to
      [0,1]$ given by
      \[
      h(t) = g^{-1}(f(t))
      \]
      is a continuous bijection. By \cref{lem:reparam}, we thus see
      \[
      \mathrm{Length}(g) = \mathrm{Length}(g \circ h) =
      \mathrm{Length}(f).
      \]
      It remains to show $g = \hat{f}$ (up to a possible
      time-reversal). To that end, by \cref{prop:inj-mon}, $h$ is
      strictly monotone. First suppose $h$ is increasing. Then $f,g$
      visit points of $f([0,1])$ in the same order. By the definition
      of the constant-speed parametrization we immediately get
      $\hat{f} = g$, as desired. Alternatively, $h$ is decreasing, in
      which case $\hat{f}$ coincides with the time-reversal of $g$.
    \item Suppose that $g$ is not injective and that $\{g(0), g(1)\}
      \neq \{f(0), f(1)\}$. We want to show this implies
      $\mathrm{Length}(g) > \mathrm{Length}(f)$. To that end: First
      note that reversing the parametrization of $f$  as
      necessary we may assume
      \[
      \inf g^{-1}(\{f(0)\}) < \inf g^{-1}(\{f(1)\}).
      \]
      (Note that we cannot always achieve this by reversing the
      parametrization of $g$; for example, if $g(0) = f(1) = g(1)$).
      Define $s_0 = \inf g^{-1}(\{f(0)\})$ and let
      \[
      s_1 = \inf([s_0, 1] \cap g^{-1}(\{f(1)\})).
      \]
      Suppose $[s_0, s_1] \neq [0,1]$; without the loss of generality
      we may assume $s_0 \neq 0$. Then $g(0) \neq g(s_0)$ and so
      \[
      0 < d(g(0), g(s_0)) \leq \mathrm{Length}(g\llcorner [0, s_0]).
      \]
      Now, on the other hand, observe $f^{-1} \circ g : [0,1] \to
      [0,1]$ is well-defined and continuous. Since $(f^{-1} \circ
      g)(s_0) = 0$ and $(f^{-1}\circ g)(s_1) = 1$ and the continuous
      image of a connected set is connected, we get $(f^{-1} \circ
      g)([s_0, s_1]) = [0,1]$, whence we have $g([s_0, s_1]) =
      f([0,1])$. It follows that
      \[
      \mathrm{Length}(f) \leq \mathrm{Length}(g\llcorner [0, s_0]).
      \]
      Together with the earlier bound we thus
      obtain
      \[
      \mathrm{Length}(g) \geq \mathrm{Length}(g\llcorner [0, s_0]) +
      \mathrm{Length}(g\llcorner [0, s_0]) > \mathrm{Length}(f),
      \]
      as desired.
    \item Suppose $g$ is not injective and that $\{g(0), g(1)\} =
      \{f(0), f(1)\}$. We want to show $\mathrm{Length}(g) >
      \mathrm{Length}(f)$. To that end fix $t \in [0,1]$ such that
      $g^{-1}(\{g(t)\})$ contains multiple values. Let
      \[
      t_0 = \inf g^{-1}(\{g(t)\})\quad\textrm{and}\quad t_1 = \sup
      g^{-1}(\{g(t)\}).
      \]
      Note $t_0 \neq t_1$. Observe that since $g$ has constant-speed
      parametrization we have $g([t_0, t_1])$ is not a singleton. Hence
      we have
      \[
      0 < \mathrm{diam}(g([t_0, t_1])) \leq
      \mathrm{Length}(g\llcorner [t_0, t_1]).
      \]
      Define
      \[
      \tilde g(t) =
      \begin{cases}
        g(t) & t \not \in [t_0, t_1] \\
        g(t_0) & t \in [t_0, t_1].
      \end{cases}
      \]
      Note that $\tilde g$ is continuous, hence $f^{-1} \circ \tilde
      g$ is continuous, and the same analysis as in the previous case
      shows $\mathrm{Length}(f) \leq \mathrm{Length}(\tilde g)$.
      Finally we have
      \[
      \mathrm{Length}(g) =
      \mathrm{Length}(g\llcorner[t_0, t_1]) + \mathrm{Length}(\tilde g) >
      \mathrm{Length}(f),
      \]
      as desired.
  \end{enumerate}
  Since these cases are exhaustive, we see $\mathrm{Length}(g) =
  \mathrm{Length}(f)$ iff $g$ is injective, which occurs iff $g = f$
  (up to time reversal).
\end{proof}

\section{Background on reproducing kernel Hilbert spaces}
\label{sec:rkhs}

For technical reasons, our arguments for Section \ref{sec:Wasserstein}
rely on the existence of a specific reproducing kernel Hilbert space
(RKHS) defined atop a given compact metric space. We recall the
following definition.
\begin{defn}
  Given a Polish space $V$ and an RKHS $\mathcal{H}$ of functions from
  $V$ to $\mathbb{R}$, the \emph{maximum mean discrepancy} (MMD)
  distance on $\mathcal{P}(V)$ is defined by
  \[
    \text{MMD}_{\mathcal{H}}(\mu,\nu) \coloneqq \sup_{\Vert
      f\Vert_{\mathcal{H}}\leq1}\left|\int_{V}fd\mu-\int_{V}fd\nu\right|.
  \]
\end{defn}

The following is a folklore result which can be deduced by combining
certain published results on RKHSes with a classical universality
theorem from descriptive set
theory.
\begin{theorem}
  Let $V$ be a compact metric space\emph{. }Then there exists an RKHS
  $\mathcal{H}$ of functions from $V$ to $\mathbb{R}$ for which the
  topology on $\mathcal{P}(V)$ induced by $\text{MMD}_{\mathcal{H}}$
  is the same as the weak{*} topology, and for which the reproducing
  kernel $k$ is bounded.
\end{theorem}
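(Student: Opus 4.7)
The plan is to construct the desired RKHS by transferring a known construction from a ``universal'' compact metric space back to $V$ via an embedding. Specifically, I would combine Urysohn's embedding theorem (the ``classical universality theorem from descriptive set theory'') with a c-universality result for Gaussian-type RKHSes on compact subsets of separable Hilbert space.

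First, by Urysohn's embedding theorem, every compact metric space admits a topological embedding $\iota : V \hookrightarrow [0,1]^{\mathbb{N}}$ into the Hilbert cube, where $[0,1]^{\mathbb{N}}$ carries the product topology (metrized, e.g., by $d_{\mathbb{H}}(x,y) = \sum_n 2^{-n}|x_n - y_n|$). On the Hilbert cube, fix a summable sequence of positive weights $(w_n)_{n\in\mathbb{N}}$ and define the kernel
\[
K(x,y) = \exp\!\left(-\sum_{n=1}^\infty w_n (x_n - y_n)^2\right).
\]
Because the $w_n$ are summable and each $|x_n - y_n|^2 \leq 1$, the sum converges, and $K$ is a continuous, symmetric, positive definite kernel bounded by $1$ (viewing it as a Gaussian kernel on an $\ell^2$-type Hilbert space). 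By c-universality results for Gaussian-type kernels on compact subsets of separable Hilbert spaces, as developed in \cite{wynne2022kernel} and related work, the RKHS $\mathcal{H}_K$ is dense in $C([0,1]^{\mathbb{N}})$ in sup norm.

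Next, transfer $K$ back to $V$ by setting $k(x,y) \coloneqq K(\iota(x), \iota(y))$. This $k$ inherits continuity, boundedness by $1$, and positive definiteness from $K$, and its RKHS $\mathcal{H}$ consists of pullbacks $\{f \circ \iota : f \in \mathcal{H}_K\}$. To see $\mathcal{H}$ is dense in $C(V)$: given $g \in C(V)$, Tietze's extension theorem applied to the compact subset $\iota(V) \subseteq [0,1]^{\mathbb{N}}$ (using that $\iota$ is a homeomorphism onto its image by \Cref{prop:inv-cont}) yields some $\tilde g \in C([0,1]^{\mathbb{N}})$ with $\tilde g \circ \iota = g$. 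By c-universality of $\mathcal{H}_K$, there exist $f_n \in \mathcal{H}_K$ with $\|f_n - \tilde g\|_\infty \to 0$, and these pull back to uniform approximations of $g$ in $\mathcal{H}$. Equivalently, $k$ is \emph{characteristic} in the standard sense.

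Finally, the conclusion follows from a standard metrization result for MMD: for a bounded continuous characteristic kernel on a compact metric space, $\text{MMD}_{\mathcal{H}}$ metrizes the weak* topology on $\mathcal{P}(V)$. The forward direction $\mu_n \rightharpoonup^* \mu \implies \text{MMD}_{\mathcal{H}}(\mu_n,\mu) \to 0$ follows because the unit ball of $\mathcal{H}$ is uniformly bounded and equicontinuous (hence relatively compact in $C(V)$ by Arzel\`a--Ascoli, \Cref{thm:arzela-ascoli}), so the functionals $\mu \mapsto \int f\, d\mu$ converge uniformly over this ball. The reverse direction uses density of $\mathcal{H}$ in $C(V)$: vanishing MMD forces $\int f d\mu_n \to \int f d\mu$ for a dense family in $C(V)$, which by uniform tightness (automatic on compact $V$) extends to all of $C(V)$. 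The main obstacle is verifying c-universality of the chosen kernel on the infinite-dimensional Hilbert cube, which is precisely the content of \cite{wynne2022kernel}; once that input is available, the embedding argument and the final metrization step are essentially routine.
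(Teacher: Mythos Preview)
Your proposal is correct and takes essentially the same approach as the paper: embed $V$ into a separable Hilbert space (you use the Hilbert cube with summable weights, the paper cites the embedding into $\ell^2$ from Kechris), pull back a Gaussian-type kernel, and invoke the c-universality results of \cite{wynne2022kernel} to conclude that the resulting MMD metrizes the weak* topology. The only difference is packaging: where you spell out the Tietze extension argument for density in $C(V)$ and the Arzel\`a--Ascoli/density argument for metrization, the paper simply cites Theorems 9 and 21 of \cite{wynne2022kernel} directly for the kernel $k(x,x')=e^{-\frac{1}{2}\|T(x)-T(x')\|_Y^2}$ on $V$ itself.
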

\begin{proof}
  We argue by abstract construction of a specific reproducing kernel.
  Let $Y$ be a separable Hilbert space and $T : V\rightarrow Y$ be a
  continuous injection. Then Theorems 9 and 21 from
  \cite{wynne2022kernel} establish that the MMD distance associated to
  the RKHS $\mathcal{H}$ induced by the
  kernel
  \[
    k(x,x^\prime)=e^{-\frac{1}{2}\Vert T(x) - T(x^\prime)\Vert^2_Y}
  \]
  induces the weak* topology on $\mathcal{P}(V)$. (Note that this
  kernel is automatically bounded since $T(V)$ is compact inside $Y$.)

  Lastly, such a map $T$ and separable Hilbert space $Y$ always exist
  whenever $V$ is compact. Indeed, taking $Y=\ell^2$, this follows
  from the classical fact that every compact metric space can be
  embedded homeomorphically as a compact subset of $\ell^2$, see
  Section 4.C in \cite{kechris2012classical}.
\end{proof}
Given a compact metric space $V$ and RKHS $\mathcal{H}$ atop $V$, we
equip the space $\mathcal{P}(V)$ with the metric
$\text{MMD}_{\mathcal{H}}$. In particular we choose $\mathcal{H}$ so
that $\text{MMD}_{\mathcal{H}}$ puts the same topology on
$\mathcal{P}(V)$ as the weak{*} topology, so in particular
$(\mathcal{P}(V), \text{MMD}_{\mathcal{H}})$ is itself a compact
metric space.

Now given two measures $\Lambda,\Xi\in\mathcal{P}(\mathcal{P}(V))$, we
consider the 1-Wasserstein distance on $\mathcal{P}(\mathcal{P}(V))$
defined with respect to $\text{MMD}_{\mathcal{H}}$ on
$\mathcal{P}(V)$:
\[
  \mathbb{W}_{1}(\Lambda,\Xi) \coloneqq \inf_{\pi\in\Pi(\Lambda,\Xi)}
  \int_{\mathcal{P}(V) \times \mathcal{P}(V)}
  \text{MMD}_{\mathcal{H}}(\mu, \mu^{\prime}) d\pi(\mu,\mu^{\prime}).
\]
It holds that $\mathbb{W}_{1}$ metrizes the weak{*} topology on
$\mathcal{P}(\mathcal{P}(V))$.

At the same time, it is shown in the proof of \cite[Theorem
4.10]{wainwright2019high} that $\mathbb{E}[\text{MMD}(\mu,
\hat{\mu}_{n})]$ is bounded by twice the Rademacher complexity
$Rad_{n}(F)$ of the function class $F=\{\Vert f\Vert_{\mathcal{H}}
\leq 1\}$; and in the case where the reproducing kernel $k$ is
bounded, \cite[Section 4.3]{bartlett2002rademacher} establishes that
$Rad_{n}(F)\leq\sqrt{\frac{\sup_{x}k(x,x)}{n}}$. Consequently
$\mathbb{E}[\text{MMD}(\mu,\hat{\mu}_{n})]\leq
C_{\mathcal{H}}n^{-1/2}$ for some uniform constant
$C_{\mathcal{H}}>0$. Likewise, \cite[Theorem 4.10]{wainwright2019high}
establishes that
\[
  \mathbb{P}[\text{MMD}(\mu,\hat{\mu}_{n})\geq t +
  \mathbb{E}[\text{MMD}(\mu, \hat{\mu}_{n})]] \leq
  \exp\left(-\frac{nt^{2}}{D_{\mathcal{H}}}\right)
\]
where $D_{\mathcal{H}}$ is a constant which depends only on the
reproducing kernel $k(\cdot,\cdot)$ for $\mathcal{H}$ and is finite
whenever $\sup_{x,y\in V}k(x,y)<\infty$.

\section{Deferred Proofs}
\subsection{Proofs for Section \ref{sec:basics}}
\label{subsec:proofs-for-basics}
\begin{proof}[Proof of Proposition \ref{prop:mins-exist}]
  The proof follows essentially from the compactness and
  semicontinuity properties of the family of absolutely continuous
  curves as given in Appendix~\ref{appn:cptness}. To give the details,
  let
  \[
    M \coloneqq \inf_{\gamma\in AC([0,1];X)}\left\{ \int_{X}d^{2}(x,
      \Gamma) d\Lambda(x) + \beta\text{Length}(\gamma)\right\} .
  \]

  Let $\gamma^{n}$ be a sequence of AC curves in $X$, such that
  \[
    \lim_{n\rightarrow\infty}\left\{ \int_{X}d^{2}(x,\Gamma^{n})
      d\Lambda(x) + \beta\text{Length}(\gamma^{n})\right\} =M.
  \]
  Without loss of generality, we can take $\sup_{n}
  \text{Length}(\gamma^{n}) \leq \frac{2M}{\beta}$. Note also that
  \[
    \int_{X}d^{2}(x,\Gamma)d\Lambda(x)+\beta\text{Length}(\gamma)
  \]
  is invariant under time-reparametrization. In particular, by Lemma
  \ref{lem:const-speed-reparam} we can take each $\gamma^{n}$ to have
  unit-speed parametrization. We can thus apply Lemma \ref{lem:length
    is lsc} to deduce that there is some subsequence $n_{k}$ and some
  limiting AC curve $\gamma^{*}:[0,1]\rightarrow X$ satisfying
  $\liminf_{k\rightarrow\infty} \text{Length}(\gamma^{n_{k}}) \geq
  \text{Length}(\gamma^{*})$, and $\gamma_{t}^{n_{k}} \rightarrow
  \gamma_{t}^{*}$ uniformly in $t$. The latter implies that pointwise
  in $x$, $\lim_{k\rightarrow\infty} d(x,\Gamma^{n_{k}}) =
  d(x,\Gamma^{*})$. Consequently
  \begin{multline*}
    M = \liminf_{k\rightarrow\infty} \int_{X} d^{2}(x,\Gamma^{n_{k}})
    d\Lambda(x) + \beta\text{Length}(\gamma^{n_{k}}) \geq \int_{X}
    d^{2}(x,\Gamma^{*}) d\Lambda(x) + \beta\text{Length}(\gamma^{*}),
  \end{multline*}
  and so $\gamma^{*}$ is a minimizer, as desired.
\end{proof}

\begin{proof}[Proof of Proposition \ref{prop:r2-nonuniqueness}] Let $X
  \subseteq \mathbb R^2$ such that $X^\circ \neq \varnothing$ and let
  $\mu \in \mathcal P(X)$. For some fixed $\beta > 0$ let $\gamma$
  minimize $\mathrm{PPC}(\Lambda)$. Then by \cite[Thm.\
  1.3]{lu2016average}, $\gamma$ is injective; in particular,
  $\gamma(0) \neq \gamma(1)$. Suppose there exist distinct isometries
  $\varphi_1$, $\varphi_2$, $\varphi_3$ such that $(\varphi_1)_{\#
    \mu} = \mu$; taking an affine extension, we may treat them as
  isometries of $\mathbb{R}^2$. There are only two such isometries
  that fix $\{\gamma(0), \gamma(1)\}$: (1) identity, and (2)
  reflection across $\{d(x, \gamma(0)) = d(x,\gamma(1))\}$. So some
  $\varphi_i$ maps $\{\gamma(0), \gamma(1)\}$ to a different set.
  Since $\gamma$ is a homeomorphism onto its image
  (\cref{prop:inv-cont}), $\gamma(0), \gamma(1)$ are the unique
  noncut\footnote{Given a set $S$, we say $p \in S$ is a \emph{noncut}
    point of $S$ if $S \setminus \{p\}$ is connected.} points of
  $\mathrm{image}(\gamma)$, and since $\varphi_i$ is also a
  homeomorphism, $\varphi_i$ not fixing $\{\gamma(0), \gamma(1)\}$
  implies $\mathrm{image}(\varphi \circ \gamma) \neq
  \mathrm{image}(\gamma)$.

  Finally, since $\mu$ is invariant under $\varphi_i$, and since the
  rest of both the data-fitting term and penalty term in
  $\mathrm{PPC}(\Lambda)$ depend only on the metric and $\varphi_i$ is
  an isometry, we see $\varphi_i \circ \gamma \neq \gamma$ are
  distinct minimizers of $\mathrm{PPC}(\Lambda)$ not equivalent by
  reparametrization.
\end{proof}
{
  \def\triscale{.8}
  \def\trispace{-3em}
  \begin{figure}[H]
    \centering
    \includegraphics[scale=\triscale]{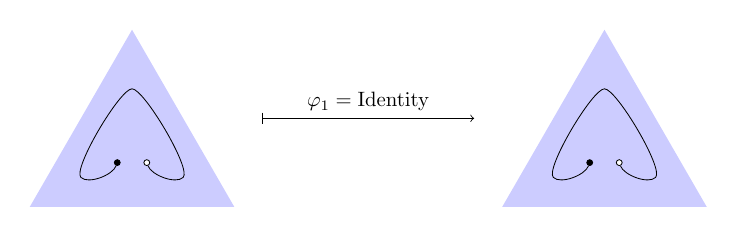}
  \end{figure}
  \vspace{\trispace}
  \begin{figure}[H]\continuedfloat
    \includegraphics[scale=\triscale]{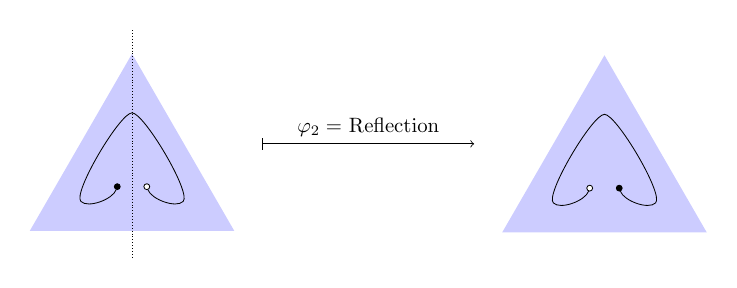}
  \end{figure}
  \vspace{\trispace}
  \begin{figure}[H]\continuedfloat
  \includegraphics[scale=\triscale]{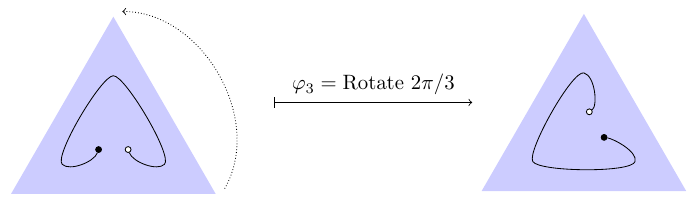}
  \caption{ Illustration for the proof of Proposition
    \ref{prop:r2-nonuniqueness}. Take $\varphi_1, \varphi_2,
    \varphi_3$ to be the three distinct isometries indicated, applied
    to a curve with $\mu$ uniform on a triangle. In this case,
    $\varphi_3$ yields a distinct image.}
\end{figure}
}

We use the following lemma as an ingredient in the proof of
Proposition \ref{prop:gamma-m-gamma}.

\begin{lem}
  \label{lem:continuity-unpenalized-objective}
  Let $X$ be a compact metric space. Let
  $(\gamma^{n})_{n\in\mathbb{N}}$ denote a sequence of measurable
  functions from $[0,1]$ to $X$, and let
  $(\Lambda_{m})_{m\in\mathbb{N}}$ denote a sequence in
  $\mathcal{P}(X)$. Suppose that for some $\gamma:[0,1]\rightarrow X$
  and $\Lambda\in\mathcal{P}(X)$ we have
  $\gamma_{t}^{n}\rightarrow\gamma_{t}$ uniformly in $t$, and
  $\Lambda_{m}\rightharpoonup^{*}\Lambda$, respectively. Then,
  \[
    \lim_{m,n\rightarrow\infty} \int_{X} d^{2}(x,\Gamma^{n})
    d\Lambda_{m}(x) = \int_{X} d^{2}(x,\Gamma) d\Lambda(x).
  \]
\end{lem}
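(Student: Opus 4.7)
The plan is to reduce the joint limit to two separate one-index limits, by inserting $\int d^2(x,\Gamma)\,d\Lambda_m(x)$ as an intermediate quantity and bounding each piece by a triangle-inequality-type split.

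First, I would establish the following geometric fact: uniform convergence $\gamma^n_t\to\gamma_t$ in $t$ implies that the Hausdorff distance $d_H(\Gamma^n,\Gamma)\to 0$. Indeed, any $y\in\Gamma^n$ has the form $y=\gamma^n_t$ for some $t$, so $d(y,\Gamma)\le d(\gamma^n_t,\gamma_t)\le \sup_s d(\gamma^n_s,\gamma_s)$, and symmetrically for points of $\Gamma$. Since the distance-to-set function is $1$-Lipschitz, this yields $\sup_{x\in X}|d(x,\Gamma^n)-d(x,\Gamma)|\le d_H(\Gamma^n,\Gamma)\to 0$. Compactness of $X$ bounds $d$ by $\mathrm{diam}(X)$, so squaring gives $\sup_{x\in X}|d^2(x,\Gamma^n)-d^2(x,\Gamma)|\to 0$ as $n\to\infty$, i.e.\ uniform convergence of $d^2(\cdot,\Gamma^n)$ to $d^2(\cdot,\Gamma)$.

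Next, note that $x\mapsto d^2(x,\Gamma)$ is a continuous, bounded function on the compact metric space $X$ (it is $1$-Lipschitz in $d$ and bounded by $\mathrm{diam}(X)^2$), so it is a valid test function for weak$^*$ convergence. Hence by $\Lambda_m\rightharpoonup^*\Lambda$,
\[
  \int_X d^2(x,\Gamma)\,d\Lambda_m(x)\;\longrightarrow\;\int_X d^2(x,\Gamma)\,d\Lambda(x)\qquad(m\to\infty).
\]

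Finally I would combine the two estimates by the triangle inequality:
\begin{align*}
  \left|\int_X d^2(x,\Gamma^n)d\Lambda_m(x)-\int_X d^2(x,\Gamma)d\Lambda(x)\right|
  &\le \sup_{x\in X}\bigl|d^2(x,\Gamma^n)-d^2(x,\Gamma)\bigr| \\
  &\quad+ \left|\int_X d^2(x,\Gamma)d\Lambda_m(x)-\int_X d^2(x,\Gamma)d\Lambda(x)\right|.
\end{align*}
The first term is independent of $m$ and tends to zero as $n\to\infty$; the second term is independent of $n$ and tends to zero as $m\to\infty$. Taking $m,n\to\infty$ in either order (or jointly) gives the claim.

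The only subtlety is the lack of any regularity hypothesis on the $\gamma^n$ beyond measurability, but this is irrelevant: the entire argument depends only on the ranges $\Gamma^n,\Gamma$, which are nonempty subsets of $X$, and on the elementary fact that $d(\cdot,A)$ is $1$-Lipschitz for any nonempty $A\subseteq X$. So the ``hard part'' is really just the observation that uniform convergence of the parametrizations forces Hausdorff convergence of the ranges, after which the result follows from the standard interplay between uniform convergence of integrands and weak$^*$ convergence of measures against continuous test functions.
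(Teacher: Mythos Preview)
Your proof is correct and follows essentially the same approach as the paper's: both establish that $d^2(\cdot,\Gamma^n)\to d^2(\cdot,\Gamma)$ uniformly on $X$ (using uniform convergence of the parametrizations and a diameter bound to pass to squares), then combine this with weak$^*$ convergence against a fixed continuous test function.

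There are two small organizational differences worth noting. First, you package the key estimate as Hausdorff convergence $d_H(\Gamma^n,\Gamma)\to 0$ together with the $1$-Lipschitz property of $d(\cdot,A)$, whereas the paper proves $|\inf_s d(x,\gamma^n_s)-\inf_t d(x,\gamma_t)|<2\varepsilon$ by hand; your formulation is cleaner. Second, the paper inserts the intermediate quantity $\int d^2(x,\Gamma^n)\,d\Lambda$ and invokes the Moore--Osgood theorem (uniform in $m$, pointwise in $n$), while you insert $\int d^2(x,\Gamma)\,d\Lambda_m$ and get the joint limit directly from the triangle inequality without naming Moore--Osgood. Your split is slightly more economical, since the uniform bound on the first term and the $n$-independence of the second immediately give the double limit.
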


\begin{proof}
  We employ the Moore-Osgood theorem for double limits. Accordingly,
  it suffices to show that:
  \begin{enumerate}
    \item $\int_{X}d^{2}(x,\Gamma^{n})d\Lambda_{m}(x)$ converges
      uniformly in $m$ to $\int_{X}d^{2}(x,\Gamma)d\Lambda_{m}(x)$ as
      $n\rightarrow\infty$; and,
    \item $\int_{X}d^{2}(x,\Gamma^{n})d\Lambda_{m}(x)$ converges
      pointwise in $n$ to $\int_{X}d^{2}(x,\Gamma^{n})d\Lambda(x)$ as
      $m\rightarrow\infty$.
  \end{enumerate}
  For (1), we use the fact that $\gamma_{t}^{n}\rightarrow\gamma_{t}$
  uniformly in $t$. Indeed let $N$ be sufficiently large that for all
  $n \geq N$, $d(\gamma_{t}^{n},\gamma_{t})<\varepsilon$ uniformly in
  $t$. Then, for fixed $x\in X$, we claim that
  \[
    \Big| \inf_{s \in [0,1]} d(x,\gamma^{n}_s)-\inf_{
      t \in [0,1]}d(x,\gamma_{t})  \Big| < 2\varepsilon.
  \]
  Indeed, without the loss of generality suppose that $\inf_{ s \in
    [0,1]} d(x,\gamma^{n}_{ s}) \geq \inf_{ t \in [0,1]}
  d(x,\gamma_{t})$, and let $t_0 \in [0,1]$ such that $d(x,
  \gamma_{t_{0}}) < \inf_{ t \in [0,1]} d(x,\gamma_{t}) +
  \varepsilon$. Then by the triangle inequality,
  $d(x,\gamma_{t_{0}}^{n}) < \inf_{ t \in [0,1]} d(x,\gamma_{t}) +
  2\varepsilon$; hence $\inf_{ s \in [0,1]}d(x,\gamma^{n}_{ s}) <
  \inf_{ t \in [0,1]} d(x,\gamma_{t}) + 2\varepsilon$. Applying the
  same reasoning when $\inf_{ s \in [0,1]} d(x,\gamma^{n}_{ s}) \leq
  \inf_{ t \in [0,1]} d(x,\gamma_{t})$ establishes the claim.

  Now, supposing again that $\inf_{ s \in [0,1]} d(x,\gamma^{n}_{ s})
  \geq \inf_{ t \in [0,1]} d(x,\gamma_{t})$, we have that
  \[
    \left(\inf_{ s \in [0,1]} d(x,\gamma^{n}_s)\right)^{2} <
    \left(\inf_{ t \in [0,1]} d(x, \gamma_t) +
      2\varepsilon\right)^{2}.
  \]
  Note that we may rewrite the inequality above as
  \begin{align*}
    \inf_{ s \in [0,1]} d^{2}(x,\gamma^{n}_{ s})
    & < \inf_{ t \in [0,1]} (d^2(x, \gamma_t) + 4 \varepsilon
      d(x, \gamma_t) + \varepsilon^2), \\
    & \leq\inf_{ t \in [0,1] } d^{2}(x,\gamma_{t}) + 4\varepsilon
      \cdot \text{Diam}(X)+\varepsilon^{2}.
  \end{align*}
  By a symmetric argument, we thus have that
  \begin{align*}
    |d^{2}(x,\Gamma^{n})-d^{2}(x,\Gamma)|
    &= { \Big |} \inf_{ s \in [0,1]} d^{2}(x,\gamma^{n}_{s}) -
      \inf_{ t \in [0,1]}d^{2}(x,\gamma_{t}) { \Big |}\\
    & <4\varepsilon\cdot\text{Diam}(X)+\varepsilon^{2}.
  \end{align*}
  This estimate is uniform in $\mu$. Therefore we compute that
  \begin{align*}
    \left| \int_{X}d^{2}(x,\Gamma^{n}) d\Lambda_{m}(x) - \int_{X}
    d^{2}(x,\Gamma) d\Lambda_{m}(x)\right|
    & \leq \int_{X} |d^{2}(x,\Gamma^{n})-d^{2}(x,\Gamma)|
      d\Lambda_{m}(x)\\
    & <4\varepsilon\cdot\text{Diam}(X)+\varepsilon^{2}
  \end{align*}
  which is uniform in $m$ as desired.

  For (2), first note that for any nonempty $A \subseteq X$, the
  Hausdorff distance $d(x, A)$ is continuous in $x$; unrelatedly, the
  domain $X$ is compact, hence bounded. Therefore, for any fixed $n$
  the function $x\mapsto d^{2}(x,\Gamma^{n})$ is bounded and
  continuous on $(X,d)$. Hence, directly from the fact that $\Lambda_m
  \rightharpoonup^* \Lambda$, we deduce ${ \lim_{m \to \infty}}
  \int_{X} d^{2}(x,\Gamma^{n}) d\Lambda_{m}(x) = \int_{X}
  d^{2}(x,\Gamma^{n}) d\Lambda(x)$.
\end{proof}

\begin{rem}
  Let us give an indication of the proof strategy for Proposition
  \ref{prop:gamma-m-gamma}, which may be beneficial for readers
  familiar with the notion of $\Gamma$-convergence. (See
  \cite{braides2002gamma, degiorgi2013selected} for background on
  $\Gamma$-convergence, which is an abstract notion of convergence of
  minimization problems; familiarity with this notion is beneficial
  but not necessary to follow our arguments.) As we remark immediately
  following Lemma \ref{lem:length is lsc}, Lemma \ref{lem:length is
    lsc} can be interpreted as showing that the functional
  $\text{Length}(\cdot)$ is lower semicontinuous on the space
  $AC([0,1];X)/\sim$, where $\sim$ denotes equality up to time
  reparametrization. Similarly, here our strategy amounts to arguing
  that if $\Lambda_{N} \rightharpoonup^{*} \Lambda$, then the sequence
  of functionals $\text{PPC}(\Lambda_{N})$ $\Gamma$-converge to
  $\text{PPC}(\Lambda)$, at least if the functionals
  $\text{PPC}(\Lambda_{N})$ and $\text{PPC}(\Lambda)$ are understood
  as taking arguments in the space $AC([0,1];X)/\sim$. We then combine
  this $\Gamma$-convergence with a compactness argument applied to the
  sequence of minimizers.

  We note, moreover, that the final part of the statement of the
  proposition can also be read as asserting that there exists a
  subsequence of minimizers $\gamma^{*N}$ which converges to some
  $\gamma^{*}$ in a suitable quotient topology on $AC([0,1];X)/\sim$.
\end{rem}

\begin{rem}
  We mention that Proposition \ref{prop:gamma-m-gamma} is broadly
  analogous to a result obtained by two of the authors in \cite[Cor.
  5.2]{kobayashi2024monge}, albeit in a different setting; see also
  \cite[Lem. 3]{slepvcev2014counterexample} for a similar stability
  result on Euclidean space for the closely related ``average distance
  variational problem'' from \cite{buttazzo2002adp}.
\end{rem}

\begin{proof}[Proof of Proposition~\ref{prop:gamma-m-gamma}]
  \emph{Step 1.} First, we observe the following. Let
  $(\gamma^{N})_{N\in\mathbb{N}}$ be any sequence in $AC([0,1];X)$
  converging uniformly in $t$ to some $\gamma\in AC([0,1];X)$. Then,
  by combining Lemmas \ref{lem:continuity-unpenalized-objective} and
  \ref{lem:length is lsc}(ii), we have that
  \[
    \liminf_{N\rightarrow\infty} \left( \int_{X}d^{2}(x,\Gamma^{N})
      d\Lambda_{N}(x) + \beta \text{Length}(\gamma^{N}) \right) \geq
    \int_{X} d^{2}(x,\Gamma) d\Lambda(x) + \beta
    \text{Length}(\gamma).
  \]

  \emph{Step 2}. Let $\gamma\in AC([0,1];X)$ be arbitrary. We claim
  that there exists a sequence $\gamma^{N}$ in $AC([0,1];X)$
  converging uniformly in $t$ to $\gamma\in AC([0,1];X)$, such that
  \[
    \lim_{N\rightarrow\infty} \left( \int_{X}d^{2}(x,\Gamma^{N})
      d\Lambda_{N}(x) + \beta \text{Length}(\gamma^{N}) \right) \leq
    \int_{X} d^{2}(x,\Gamma) d\Lambda(x) + \beta
    \text{Length}(\gamma).
  \]
  Indeed we can just take $\gamma^N = \gamma$. Then Lemma
  \ref{lem:continuity-unpenalized-objective} tells us that
  \[
    \int_{X}d^{2}(x,\Gamma^{N})d\Lambda_{N}(x) \rightarrow
    \int_{X}d^{2}(x,\Gamma)d\Lambda(x),
  \]
  while $\text{Length}(\gamma^{N})=\text{Length}(\gamma).$

  \emph{Step 3}. Next, note that for arbitrary $x,y\in X$, we have
  $d^{2}(x,y)\leq\text{diam}(X)^{2}$, hence for arbitrary $\gamma^N$,
  \[
    \int_{X}d^{2}(x,\Gamma^{N})d\Lambda_{N}(x)\leq\text{diam}(X)^{2}
  \]
  also. Furthermore, if we consider a $\gamma^{N}$ which is constant
  in time, then we have that $\gamma^{N}$ has length zero, hence
  \begin{align*}
    \min_{\gamma \in AC([0,1];X)}\text{PPC}(\Lambda_{N})
    & \leq\int_{X} d^{2}(x,\Gamma^{N}) d\Lambda_{N}(x) +
      \beta\text{Length}(\gamma^{N})\\
    & =\int_{X} d^{2}(x,\Gamma^{N}) d\Lambda_{N}(x)\\
    & \leq\text{diam}(X)^{2}.
  \end{align*}
  Consequently, let $\gamma^{*N}$ denote a minimizer for
  $\text{PPC}(\Lambda_{N})$; it follows that
  $\text{Length}(\gamma^{*N}) \leq \text{diam}(X)^{2} / \beta$ also.
  We can take $\gamma^{*N}$ to be constant speed without loss of
  generality. And so, arguing as in the proof of Proposition
  \ref{prop:mins-exist}, by Lemma \ref{lem:length is lsc}(i), we
  deduce that: passing to a subsequence $N_{k}$, there exists some AC
  curve $\gamma^{*}$ such that
  $\gamma_{t}^{*N_{k}}\rightarrow\gamma_{t}^{*}$ uniformly.

  More generally, let $\gamma_{t}^{*N_{k}}$ be any subsequence of
  minimizers of $\text{PPC}(\Lambda_{N})$ which converges uniformly in
  $t$, with limit $\gamma^{*}\in AC([0,1];X)$. Now, fix any $\gamma\in
  AC([0,1],X)$. Step 1 implies that:
  \begin{align*}
    \int_X d^{2}(x,\Gamma^{*}) d\Lambda(x) +
    \text{Length}(\gamma^{*})
    &\leq \liminf_{N_{k} \rightarrow \infty} \left(
      \int_{X} d^{2}(x,\Gamma^{*N_{k}})
      d\Lambda_{N_{k}}(x) +
      \text{Length}(\gamma^{*N_{k}})\right) {,}
      \shortintertext{ and since each $\gamma^{*N_k}$ minimizes
      $\mathrm{PPC}(\Lambda_{N_k})$, letting $\gamma^{N_k} = \gamma$ as
      in Step 2 yields}
    &\leq \lim_{N_{k}\rightarrow\infty} \left(\int_{X}
      d^{2}(x,\Gamma^{N_{k}}) d\Lambda_{N_{k}}(x) +
      \text{Length}(\gamma^{N_{k}})\right)\\
    & = \int_{X}
      d^{2}(x,\Gamma) d\Lambda(x) + \text{Length}(\gamma).
  \end{align*}
   As $\gamma$ was chosen
  arbitrarily, this shows that $\gamma^{*}$ is a minimizer of
  $\text{PPC}(\Lambda)$ as desired.
\end{proof}

\begin{rem}
  We structure the proof of Theorem \ref{thm:discrete-to-continuum}
  below as if we are proving the $\Gamma$-convergence of
  $\text{PPC}^{K}(\Lambda_{N})$ to $\text{PPC}(\Lambda)$, and then
  using compactness to extract a convergent subsequence of minimizers
  which therefore converges to a minimizer of $\text{PPC}(\Lambda)$.
  However, we alert the reader who is a specialist in
  $\Gamma$-convergence that in order for the proof below to ``really''
  be thought of as establishing $\Gamma$-convergence, we must formally
  replace the space on which $\text{PPC}^{K}(\Lambda_{N})$ is defined:
  specifically we must replace the space of $K$-tuples with the space
  of $K$-piecewise geodesics, and then think of the integral
  $\int_{X}d^{2}(x,\Gamma_{K})d\Lambda(x)$ as a functional over
  $K$-piecewise geodesics depending only on the locations of the
  knots. Subject to this replacement, $\text{PPC}^{K}(\Lambda_{N})$
  and $\text{PPC}(\Lambda)$ now are defined atop a common space
  (namely the space $AC([0,1];X)/\sim$ mentioned previously), and thus
  the question of whether $\text{PPC}^{K}(\Lambda_{N})$
  $\Gamma$-converges to $\text{PPC}(\Lambda)$ is well-defined.
  (However, we emphasize that the validity of our proof does not hinge
  on these abstract analytic considerations.)
\end{rem}

\begin{proof}[Proof of Theorem \ref{thm:discrete-to-continuum}]
  \emph{Step 1.} We consider a sequence of sets of points
  $(\{\gamma_{k}^{K}\}_{k=1}^{K})_{K\in\mathbb{N}}$. For each $K$, let
  $\gamma^{K}$ denote a constant speed piecewise geodesic connecting
  the points $\gamma_{k}^K$ to $\gamma_{k+1}^K$, for all $1\leq k\leq
  K-1$; such a $\gamma^{K}$ exists because $X$ is a geodesic metric
  space. Note that by construction, we have that
  \[
    \text{Length}(\gamma^{K}) = \sum_{k=1}^{K-1} d(\gamma_{k}^{K},
    \gamma_{k+1}^{K}).
  \]
  At the same time, let $\Gamma_{cont}^{K}$ denote the graph of
  $\gamma^{K}$. Note that $\Gamma^{K}\subset\Gamma_{cont}^{K}$, and so
  for any $x\in X$, we have $d^{2}(x,\Gamma_{cont}^{K})\leq
  d^{2}(x,\Gamma^{K})$. Therefore,
  \[
    \int_{X} d^{2}(x,\Gamma_{cont}^{K}) d\Lambda_{N}(x) +
    \beta\text{Length}(\gamma^{K}) \leq \int_{X}d^{2}(x, \Gamma^{K})
    d\Lambda_{N}(x) +
    \beta\sum_{k=1}^{K-1}d(\gamma_{k}^{K},\gamma_{k+1}^{K}).
  \]

  Suppose that $\gamma^{K}$ converges uniformly in $t$ to some
  limiting AC curve $\gamma$. Applying Lemma
  \ref{lem:continuity-unpenalized-objective}, we see that
  \[
    \lim_{K\rightarrow\infty, N\rightarrow\infty} \int_{X} d^{2}(x,
    \Gamma_{cont}^{K}) d\Lambda_{N}(x) =
    \int_{X}d^{2}(x,\Gamma)d\Lambda(x).
  \]
  Combining this with the facts that
  $\liminf_{K\rightarrow\infty}\text{Length}(\gamma^{K})\geq\text{Length}(\gamma)$
  (from Lemma \ref{lem:length is lsc}(ii)) and
  $\text{Length}(\gamma^{K})=\sum_{k=1}^{K-1}d(\gamma_{k}^{K},\gamma_{k+1}^{K})$,
  we see that
  \[
    \liminf_{K, N\rightarrow\infty} \left(\int_{X} d^{2}(x,
    \Gamma^{K}) d\Lambda_{N}(x) + \beta\sum_{k=1}^{K-1}
    d(\gamma_{k}^{K}, \gamma_{k+1}^{K})\right) \geq
    \int_{X}d^{2}(x,\Gamma) d\Lambda(x) + \beta\text{Length}(\gamma).
  \]

  \emph{Step 2.} Let $\gamma\in AC([0,1];X)$ be constant speed. For
  each $K$, we define $\{\gamma_{k}^{K}\}_{k=1}^{K}$ by
  \[
    \gamma_{k}^{K}=\gamma_{k/K}.
  \]
  Note that for each $K$ we have
  \[
    \text{Length}(\gamma)\geq\sum_{k=0}^{K-1}d(\gamma_{k}^{K},\gamma_{k+1}^{K}).
  \]
  We can extend $\{\gamma_{k}^{K}\}_{k=1}^{K}$ to a piecewise-constant
  function from $[0,1]$ to $X$ having the same range, by setting
  $\gamma_{t}^{K} \coloneqq \gamma_{\lfloor tK\rfloor}^{K}$. Note that
  $\gamma_{t}^{K}$ converges uniformly in $t$ to $\gamma$ as
  $K\rightarrow\infty$, since
  \[
    \sup_{t\in[0,1]}d(\gamma_{t},\gamma_{t}^{K})\leq\frac{1}{K-1}\text{Length}(\gamma).
  \]
  Hence we can apply Lemma \ref{lem:continuity-unpenalized-objective}
  to deduce that
  \[
    \lim_{K,N\rightarrow\infty}\int_{X}d^{2}(x,\Gamma^{K})d\Lambda_{N}(x)=\int_{X}d^{2}(x,\Gamma)d\Lambda(x).
  \]
  It follows that
  \[
    \limsup_{K,N\rightarrow\infty}\left(\int_{X}d^{2}(x,\Gamma^{K})d\Lambda_{N}(x)+\beta\sum_{k=1}^{K-1}d(\gamma_{k}^{K},\gamma_{k+1}^{K})\right)\leq\int_{X}d^{2}(x,\Gamma)d\Lambda(x)+\beta\text{Length}(\gamma).
  \]

  \emph{Step 3}. Finally, given any constant speed $\gamma\in
  AC([0,1];X)$, let $\gamma_{k}^{K}$ be defined as in Step 2; and, let
  $\{\gamma_{k}^{*K}\}_{k=1}^{K}$ be a minimizer of
  $\text{PPC}^{K}(\Lambda_{N})$, and let $\gamma^{*K}$ be a
  constant-speed piecewise geodesic interpolation of
  $\{\gamma_{k}^{*K}\}_{k=1}^{K}$.

  First we claim that the $\gamma^{*K}$'s have uniformly bounded
  length. Note that for any $K\in\mathbb{N}$, we can always consider a
  competitor $\{\bar{\gamma}_k^K\}_{k=1}^K$ where all $K$ knots are
  located at the same (arbitrary) point in $X$; in this case,
  $\sum_{k=0}^{K-1}d(\bar{\gamma}_{k}^{K},\bar{\gamma}_{k+1}^{K})=0$
  and
  $\int_{X}d^{2}(x,\bar\Gamma^{K})d\Lambda_{N}(x)\leq\text{Diam}(X)^{2}$.
  It therefore follows, from the fact that
  $\{\overset{*}{\gamma}_{k}^{K}\}_{k=1}^{K}$ is a minimizer of
  $\text{PPC}_{K}(\Lambda_{N})$, that
  \[
    \text{Diam}(X)^{2}\geq\int_{X}d^{2}(x,\Gamma^{*K})d\Lambda_{N}(x)+\beta\sum_{k=0}^{K-1}d\left(\gamma_{k}^{*K},\gamma_{k+1}^{*K}\right)\geq\beta\sum_{k=0}^{K-1}d\left(\gamma_{k}^{*K},\gamma_{k+1}^{*K}\right).
  \]
  Consequently,
  $\text{Length}(\gamma^{*K})=\sum_{k=0}^{K-1}d\left(\gamma_{k}^{*K},\gamma_{k+1}^{*K}\right)\leq\beta^{-1}\text{Diam}(X)^{2}$
  uniformly in $K$. So by Lemma \ref{lem:length is lsc} we can pass to
  a subsequence $K_{j}$ and extract a limiting AC curve $\gamma^{*}$
  such that $\gamma_{t}^{*K_{j}}\rightarrow\gamma_{t}^{*}$ uniformly
  in $t$; and moreover,
  \[
    \liminf_{K_{j}\rightarrow\infty}\text{Length}(\overset{*}{\gamma}^{K_{j}})\geq\text{Length}(\gamma^{*}).
  \]

  More generally, let $\{\gamma_{k}^{*K_{j}}\}_{k=1}^{K_{j}}$ be any
  subsequence of minimizers of $\text{PPC}^{K_{j}}(\Lambda_{N})$, with
  geodesic interpolations $\gamma^{*K_{j}}$, such that
  $\gamma_{t}^{*K_{j}}$ converges uniformly in $t$ to some
  $\gamma^{*}\in AC([0,1];X)$. We deduce that
  \begin{multline*}
    \int_{X}d^{2}(x,\Gamma^{*})d\Lambda(x)+\text{\ensuremath{\beta}Length}(\gamma^{*}) \\
    \begin{aligned}
      & \leq\liminf_{N,K_{j}\rightarrow\infty}\left(\int_{X}d^{2}(x,\Gamma^{*K_{j}})d\Lambda_{N}(x)+\beta\sum_{k=1}^{K_{j}-1}d(\gamma_{k}^{*K_{j}},\gamma_{k+1}^{*K_{j}})\right)\\
      & \leq\limsup_{N,K_{j}\rightarrow\infty}\left(\int_{X}d^{2}(x,\Gamma^{K_{j}})d\Lambda_{N}(x)+\beta\sum_{k=1}^{K_{j}-1}d(\gamma_{k}^{K_{j}},\gamma_{k+1}^{K_{j}})\right)\\
      & \leq\int_{X}d^{2}(x,\Gamma)d\Lambda(x)+\beta\text{Length}(\gamma)
    \end{aligned}
  \end{multline*}
  where, in the computation above, we have used Step 1 in the first
  inequality, the fact that $\{\gamma_{k}^{*K}\}_{k=1}^{K}$ is a
  minimizer in the second inequality, and Step 2 in the third
  inequality. As $\gamma$ was chosen arbitrarily (up to time
  reparametrization), this shows that $\gamma^{*}$ is a minimizer of
  $\text{PPC}(\Lambda)$ as desired.
\end{proof}

\subsection{Proofs for Section \ref{sec:Wasserstein}}
\label{subsec:proofs-for-Wasserstein}
\begin{lem}
\label{lem:doubly empirical random measure}Let $V$ be a complete
metric space. Let $d_{\mathcal{P}(V)}$ be a complete separable metric
on $\mathcal{P}(V)$. Suppose that for each $m=1,\ldots,M$, we have
$\mu_{m},\mu_{m}^{\prime}\in\mathcal{P}(V)$, and $d_{\mathcal{P}(V)}(\mu_{m},\mu_{m}^{\prime})<\varepsilon$.
Then,
\[
\mathbb{W}_{1}\left(\frac{1}{M}\sum_{m=1}^{M}\delta_{\mu_{m}},\frac{1}{M}\sum_{m=1}^{M}\delta_{\mu_{m}^{\prime}}\right)<\varepsilon.
\]
\end{lem}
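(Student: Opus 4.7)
The plan is to exhibit an explicit coupling realizing an upper bound on $\mathbb{W}_1$, namely the ``diagonal'' coupling that matches $\delta_{\mu_m}$ with $\delta_{\mu_m'}$ index by index. More precisely, I would define
\[
  \pi \coloneqq \frac{1}{M}\sum_{m=1}^{M}\delta_{(\mu_m,\mu_m')}
    \in \mathcal{P}(\mathcal{P}(V)\times \mathcal{P}(V)).
\]

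The first step is to check that $\pi$ lies in $\Pi(\frac{1}{M}\sum_m \delta_{\mu_m},\frac{1}{M}\sum_m \delta_{\mu_m'})$, i.e.\ that its first and second marginals coincide with the two empirical measures respectively. This is immediate from the definition of pushforward under the coordinate projections, and would take only a line to verify.

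The second step is simply to evaluate the transport cost against this particular coupling:
\[
  \int_{\mathcal{P}(V)\times\mathcal{P}(V)}
    d_{\mathcal{P}(V)}(\mu,\mu')\,d\pi(\mu,\mu')
  = \frac{1}{M}\sum_{m=1}^{M} d_{\mathcal{P}(V)}(\mu_m,\mu_m')
  < \varepsilon,
\]
where the strict inequality uses the hypothesis that $d_{\mathcal{P}(V)}(\mu_m,\mu_m')<\varepsilon$ for every $m$ (so the average of finitely many numbers each strictly less than $\varepsilon$ is itself strictly less than $\varepsilon$). Taking the infimum over all couplings in $\Pi(\cdot,\cdot)$ in the definition of $\mathbb{W}_1$ can only decrease this value, yielding the stated bound.

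There is essentially no hard step here; the argument is just a check that the diagonal coupling is admissible and an arithmetic computation of its cost. The only point worth care is the role of $d_{\mathcal{P}(V)}$ being complete and separable: this is needed only insofar as the statement of $\mathbb{W}_1$ requires a legitimate metric on the base space, but it plays no role in the inequality itself. The proof therefore reduces to one or two lines once the diagonal coupling is introduced.
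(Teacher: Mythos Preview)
Your proposal is correct and takes essentially the same approach as the paper: the paper phrases it in terms of a transport map $T$ sending $\delta_{\mu_m}$ to $\delta_{\mu_m'}$, which induces precisely your diagonal coupling $\pi = \frac{1}{M}\sum_m \delta_{(\mu_m,\mu_m')}$, and then computes the same average cost. If anything, your Kantorovich formulation is slightly cleaner, since it sidesteps the minor issue of $T$ being well-defined when some of the $\mu_m$ coincide.
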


\begin{proof}
  Consider the map $T$ which sends $\delta_{\mu_{m}}$ to
  $\delta_{\mu_{m}^{\prime}}$; with respect to this map, we have that
\[
\int_{\mathcal{P}(V)}d_{\mathcal{P}(V)}(\nu,T(\nu))d\left(\frac{1}{M}\sum_{m=1}^{M}\delta_{\mu_{m}}\right)(\nu)=\frac{1}{M}\sum_{m=1}^{M}d_{\mathcal{P}(V)}(\mu_{m},\mu_{m}^{\prime})<\varepsilon.
\]
This directly implies that
$\mathbb{W}_{1}\left(\frac{1}{M}\sum_{m=1}^{M}\delta_{\mu_{m}},\frac{1}{M}\sum_{m=1}^{M}\delta_{\mu_{m}^{\prime}}\right)<\varepsilon$
as desired.
\end{proof}

\begin{proof}[Proof of Theorem \ref{thm:iterated-glivenko-cantelli}]
  Assume that $d_{\mathcal{P}(V)}$ metrizes weak{*} convergence. In
  what follows, we condition on the event that
  $\mathbb{W}_{1}(\Lambda_{N},\Lambda)\rightarrow0$, which is
  guaranteed to have probability 1 by the Glivenko-Cantelli theorem
  applied to $\mathcal{P}(\mathcal{P}(V))$. From the triangle
  inequality, it then suffices to show that
  $\mathbb{W}_{1}(\Lambda_{N},\hat{\Lambda}_{N,M})\rightarrow0$,
  either in probability or a.s. respectively. We take $M$ to depend on
  $N$, and assume that $M\rightarrow\infty$ as $N\rightarrow\infty$.

  We take $d_{\mathcal{P}(V)}$ to be a maximum mean discrepancy (MMD)
  distance $\text{MMD}_{\mathcal{H}}$ coming from a reproducing kernel
  Hilbert space $\mathcal{H}$ of functions from $V$ to $\mathbb{R}$,
  for which $\text{MMD}_{\mathcal{H}}$ metrizes weak{*} convergence on
  $\mathcal{P}(V)$; the existence of such an RKHS is guaranteed by the
  results described in Appendix \ref{sec:rkhs}.

  From Lemma \ref{lem:doubly empirical random measure}, we have that
  \[
    \mathbb{E}\left[\mathbb{W}_{1}(\Lambda_{N},\hat{\Lambda}_{N,M})\right]\leq\frac{1}{N}\sum_{n=1}^{N}\mathbb{E}\left[\text{MMD}_{\mathcal{H}}(\mu_{n},\hat{\mu}_{n})\right].
  \]
  From Appendix \ref{sec:rkhs} we have that
  $\mathbb{E}\left[\text{MMD}_{\mathcal{H}}(\mu_{n},\hat{\mu}_{n})\right]=O(M^{-1/2})$,
  and so
  $\mathbb{E}\left[\mathbb{W}_{1}(\Lambda_{N},\hat{\Lambda}_{N,M})\right]\rightarrow0$
  as $N,M\rightarrow\infty$. This implies that there exists a
  subsequence of $N,M$ (which we do not relabel) along which
  $\mathbb{W}_{1}(\Lambda_{N},\hat{\Lambda}_{N,M})\rightarrow0$ almost
  surely; it follows that along this subsequence,
  $\mathbb{W}_{1}(\hat{\Lambda}_{N,M},\Lambda)\rightarrow0$ almost
  surely as desired.

  For the second claim, we study the concentration of
  $\text{MMD}_{\mathcal{H}}(\mu_{n},\hat{\mu}_{n})$ around its
  expectation. By Appendix \ref{sec:rkhs}, we have for all $t>0$,
  \[
    \mathbb{P}\left[\text{MMD}_{\mathcal{H}}(\mu_{n},\hat{\mu}_{n})\geq t+\mathbb{E}[\text{MMD}_{\mathcal{H}}(\mu_{n},\hat{\mu}_{n})]\right]\leq\exp\left(-\frac{Mt^{2}}{D_{\mathcal{H}}}\right)
  \]
  for some constant $D_{\mathcal{H}}>0$. Take $t=M^{-\gamma}$ for some
  $\gamma\in(0,1/2)$; with probability at least
  $1-N\exp\left(-\frac{M^{1-2\gamma}}{D_{\mathcal{H}}}\right)$, it
  then holds that
  \[
    \mathbb{W}_{1}(\Lambda_{N},\hat{\Lambda}_{N,M})\leq
    M^{-\gamma}+\frac{1}{N}\sum_{i=n}^{N}\mathbb{E}[\text{MMD}_{\mathcal{H}}(\mu_{n},\hat{\mu}_{n})].
  \]
  By the Borel-Cantelli lemma, this inequality holds almost surely for
  all but finitely many $N$ (which implies that
  $\mathbb{W}_{1}(\Lambda_{N},\hat{\Lambda}_{N,M})\rightarrow0$ almost
  surely) provided that
  \[
    \sum_{N=1}^{\infty}N\exp\left(-\frac{M^{1-2\gamma}}{D_{\mathcal{H}}}\right)<\infty.
  \]
  In turn, it suffices that for some $\gamma\in(0,1/2)$ and $\beta>1$,
  we have
  $N\exp\left(-\frac{M^{1-2\gamma}}{D_{\mathcal{H}}}\right)\leq
  N^{-\beta}$ for $N$ and $M$ sufficiently large, in other words,
  \[
    M\geq\left(D_{\mathcal{H}}(\beta+1)\log N\right)^{1/(1-2\gamma)}.
  \]
  This holds $N$ and $M$ sufficiently large as long as there exists
  constants $C>0$ and $q>1$ such that $M\geq C(\log N)^{q}$.

  (Lastly, in the case where the number of samples $M_n$ in $\mu_n$
  depends on the index $n$, the argument above still applies if we
  take $M \coloneqq \min_{1\leq n \leq N}$: this is because
  $M^{-1/2}\geq M_n^{-1/2}$ and similarly for
  $\exp\left(-\frac{Mt^{2}}{D_{\mathcal{H}}}\right)$, so all the same
  estimates hold.)
\end{proof}

\begin{proof}[Proof of Proposition \ref{prop:finite-reads}]
  As before, we condition on the event that
  $\mathbb{W}_{1}(\Lambda_{N},\Lambda)\rightarrow0$, which is
  guaranteed to have probability 1 by Glivenko-Cantelli. It then
  suffices to show that
  $\mathbb{W}_{1}(\Lambda_{N},\hat{\Lambda}_{N,M,R})\rightarrow0$ in
  probability.

  From Lemma \ref{lem:doubly empirical random measure}, we have that
  \[
    \mathbb{W}_{1}(\Lambda_{N},\hat{\Lambda}_{N,M})\leq\frac{1}{N}\sum_{n=1}^{N}W_{1}(\mu_{n},\hat{\mu}_{n}^{M})
  \]
  and from Proposition 1.1 of \cite{boissard2014mean} we have that
  $\mathbb{E}[W_{1}(\mu_{n},\hat{\mu}_{n}^{M})]\rightarrow0$ as
  $M\rightarrow\infty$, at a uniform rate which depends only on $d$.

  Now, let $\tilde{\mu}_n^{M,R}$ denote the finite-read approximation
  of $\hat{\mu}_n^M$ with $R$ many total reads (in the sense of
  equation (\ref{eq:noisy_empirical})). Applying Lemma \ref{lem:doubly
    empirical random measure} again, we have that
  \[
    \mathbb{W}_{1}(\hat{\Lambda}_{N,M},\hat{\Lambda}_{N,M,R})\leq\frac{1}{N}\sum_{n=1}^{N}W_{1}(\hat{\mu}_{n}^{M},\tilde{\mu}_{n}^{M,R})
  \]
  and by \cite[Theorem 2.4]{kim2024optimal}, we have that
  $\mathbb{E}[W_{1}(\hat{\mu}_{n}^{M},\tilde{\mu}_{n}^{M,R})]\leq
  C\sqrt{M/R}+o_M(1)$ as $M\rightarrow\infty$ (in particular the
  estimate is uniform), as long Assumption 2.3 from that article
  holds. It follows that, assuming $M/R\rightarrow 0$, we have
  \begin{align*}
    \mathbb{E}\left[\mathbb{W}_{1}(\Lambda_{N},\hat{\Lambda}_{N,M,R})\right] & \leq\mathbb{E}\left[\mathbb{W}_{1}(\Lambda_{N},\hat{\Lambda}_{N,M})\right]+\mathbb{E}\left[\mathbb{W}_{1}(\hat{\Lambda}_{N,M},\hat{\Lambda}_{N,M,R})\right]\\
                                                                             & \leq\sum_{n=1}^{N}\mathbb{E}[W_{1}(\mu_{n},\hat{\mu}_{n}^{M})]+\mathbb{E}[W_{1}(\hat{\mu}_{n}^{M},\tilde{\mu}_{n}^{M,R})]\\
                                                                             & \rightarrow0.
  \end{align*}
  This implies that
  $\mathbb{W}_{1}(\Lambda_{N},\hat{\Lambda}_{N,M,R})\rightarrow0$ in
  probability, as desired.
\end{proof}

\subsection{Proofs for Section \ref{sec:seriation}}
\label{subsec:proofs-for-seriation}
\begin{proof}[Proof of Proposition \ref{prop:lengthbound-1d}]
  (i) Let $P$ denote the range of $\rho_{t}$ in $X$. Since $\Lambda$
  is supported on $P$, we observe that
  $\int_{X}d^{2}(x,P)d\Lambda(x)=0$. At the same time, since
  $\gamma^{*\beta}$ is a minimizer for $\text{PPC}(\Lambda;\beta)$, we
  have that
  \[
    \int_{X}d^{2}(x,\Gamma^{*\beta})d\Lambda(x)+\beta\text{Length}(\gamma^{*\beta})\leq\int_{X}d^{2}(x,P)d\Lambda(x)+\beta\text{Length}(\rho)
  \]
  where $\Gamma^{*\beta}$ is the graph of $\gamma^*{\beta}$. It
  follows that
  \[
    0\leq\int_{X}d^{2}(x,\Gamma^{*\beta})d\Lambda(x)\leq\beta\left(\text{Length}(\rho)-\text{Length}(\gamma^{*\beta})\right)
  \]
  which establishes the first claim.

  (ii) Note that by change of variables for measures,
  \[
    \int_{X}d^{2}(x,\Gamma^{*\beta})d\Lambda(x)=\int_{0}^{1}d^{2}(\rho_{t},\Gamma^{*\beta})dt.
  \]
  Hence, our estimate from the proof of (i) shows that
  \[
    \int_{0}^{1}d^{2}(\rho_{t},\Gamma^{*\beta})dt\leq\beta\left(\text{Length}(\rho)-\text{Length}(\gamma^{*\beta})\right)\leq\beta\text{Length}(\rho).
  \]
  The claim now follows from Markov's inequality, which here indicates
  that
  \[
    \text{Leb}_{[0,1]}\left\{ t:d^{2}(\rho_{t},\Gamma^{*\beta})\geq\alpha\right\} \leq\frac{1}{\alpha}\int_{0}^{1}d^{2}(\rho_{t},\Gamma^{*\beta})dt.
  \]
\end{proof}

\begin{proof}[Proof of Theorem \ref{thm:seriation-consistency}]
  In Proposition~\ref{prop:lengthbound-1d}, we saw that
  $\text{Length}(\gamma^{*\beta})\leq\text{Length}(\rho)$, so by
  Arzel\`a-Ascoli's theorem we can extract a subsequence which
  converges, uniformly in $t$, to a curve $\gamma^*\in AC([0,1];X)$.
  By the lower semicontinuity of $\text{Length}(\cdot)$, we also have
  that $\text{Length}(\gamma^*)\leq\text{Length}(\rho)$. At the same
  time, we have the estimate
  \[
    \int_{X}d^{2}(x,\Gamma^{*\beta})d\Lambda(x)\leq\beta\text{Length}(\rho).
  \]
  Sending $\beta\rightarrow0$, we see from Lemma
  \ref{lem:continuity-unpenalized-objective} that
  $\int_{X}d^{2}(x,\Gamma^*)d\Lambda(x)=0$. In particular, $\gamma^*$
  is a minimizer of $\text{PPC}(\Lambda;0)$. More generally the same
  applies for any convergent subsequence
  $(\gamma^{*\beta_{n}})_{n\in\mathbb{N}}$.

  We first check that $\Gamma^*\supseteq P$. Since
  $0=\int_{0}^{1}d^{2}(\rho_{t},\Gamma^*)dt\geq\int_{\{t:\rho_{t}\notin\Gamma\}}d^{2}(\rho_{t},\Gamma^*)dt,$
  the set $\{t:\rho_{t}\notin\Gamma^*\}$ has zero Lebesgue measure. On
  the other hand, it is an open set in $[0,1]$ because $\gamma^*$ and
  $\rho$ are continuous from $[0,1]$. Therefore
  $\{t:\rho_{t}\notin\Gamma^*\}$ is an empty set, verifying
  $\Gamma^*\supseteq \text{supp}(\Lambda)=P$.

  Now, suppose for the sake of contradiction that $\Gamma^*\supsetneq
  P$. In order to establish contradiction, it suffices to show that
  $\Gamma^*\supsetneq P$ implies
  $\text{Length}(\gamma^*)>\text{Length}(\rho)$. (This is more or less
  obvious for curves in Euclidean space, but since we work in a
  general compact metric space we give a verbose justification.) In
  this argument we will use the assumption that $\rho$ is injective.

  Observe that since $\Gamma$ is compact and $P$ is compact,
  $\{t:\gamma^*_{t}\notin P\}$ is open inside $[0,1]$. If $\Gamma^*
  \supsetneq P$ then $\{t:\gamma^*_{t}\notin P\}$ is non-empty, and so
  contains some non-empty compact sub-interval
  $K\subset\{t:\gamma^*_{t}\notin P\}$. Accordingly let
  $\gamma^*_{k_{0}} \in \{t:\gamma^*_{t}\notin P\}$; note this point
  is disjoint from $P$, since $P$ is compact.

  Define the sets
  \[
    T_{0} \coloneqq \{t\in[0,1]:t<k_{0},\gamma^*_{t}\in P\};\qquad
    T_{1} \coloneqq \{t\in[0,1]:t>k_{0},\gamma^*_{t}\in P\}.
  \]
  Note that at least one of $T_{0}$ and $T_{1}$ must be non-empty
  since $P\cap\Gamma\neq\emptyset$. Without loss of generality let
  $T_{0}\neq\emptyset$. Let $t_{0}$ be the last time before $k_{0}$
  such that $\gamma^*_{t_{0}}\in P$ (such a last time exists since
  $P\cap\Gamma^*$ is closed). Since $\gamma^*_{(\cdot)}$ is absolutely
  continuous, we have
  \[
    \text{Length}(\gamma^*\llcorner[t_{0},k_{0}])\geq d(\gamma^*_{t_{0}},\gamma^*_{k_{0}})>0.
  \]
  Moreover, we have that
  \[
    \text{Length}(\gamma^*)\geq\text{Length}(\gamma^*\llcorner[0,t_{0}])+\text{Length}(\gamma^*\llcorner[t_{0},k_{0}]).
  \]
  We now consider two cases. If $T_{1}$ is empty, then by construction
  the graph of $\gamma^*\llcorner[0,t_{0}]$ contains $P$. But since
  $\text{Length}(\rho)=\mathcal{H}^{1}(P)$ and
  $\text{Length}(\gamma^*\llcorner[0,t_{0}])\geq\mathcal{H}^{1}(\Gamma^*\llcorner[0,t_{0}])$
  by Lemma \ref{lem:hausdorff equals length}, this establishes that
  $\text{Length}(\gamma^*\llcorner[0,t_{0}])\geq\text{Length}(\rho)$.
  Therefore $\text{Length}(\gamma^*)>\text{Length}(\rho)$ which is a
  contradiction. (The case where $T_{0}$ is empty and $T_{1}$ is
  non-empty is symmetric.)

  Now we consider the case where $T_{0}$ and $T_{1}$ are both
  non-empty, which is very similar. Let $t_1$ denote the first time
  after $k_0$ that $\gamma^*_{t_1}\in T_1$. It holds that
  \[
    \text{Length}(\gamma^*\llcorner[t_{0},k_{0}])\geq d(\gamma^*_{t_{0}},\gamma^*_{k_{0}})>0,
  \]
  \[
    \text{Length}(\gamma^*\llcorner[k_{0},t_{1}])\geq d(\gamma^*_{k_{0}},\gamma^*_{t_{1}})>0,
  \]
  and
  \begin{align*}
\text{Length}(\gamma^*)
&= \text{Length}(\gamma^*\llcorner[0,t_{0}])+\text{Length}(\gamma^*\llcorner[t_{0},k_{0}])\\
& \quad +\text{Length}(\gamma^*\llcorner[k_{0},t_{1}])+\text{Length}(\gamma^*\llcorner[t_{1},1])
\end{align*}
while by construction the (disjoint) union $\Gamma^*\llcorner[0,t_{0}]\cup\Gamma^*\llcorner[t_{1},1]$
contains $P$. Hence by Lemma \ref{lem:hausdorff equals length},
\begin{align*}
\text{Length}(\gamma^*\llcorner[0,t_{0}])+\text{Length}(\gamma^*\llcorner[t_{1},1]) & \geq\mathcal{H}^{1}(\Gamma^*\llcorner[0,t_{0}]\cup\Gamma^*\llcorner[t_{1},1])\\
 & \geq\mathcal{H}^{1}(P)\\
 & =\text{Length}(\rho).
\end{align*}
This likewise shows that
$\text{Length}(\gamma^*)>\text{Length}(\rho)$, as desired.

It remains only to argue that $\gamma^*$ coincides with a
reparametrization of $\rho$ which is ether order-preserving or
order-reversing. Let $\hat{\gamma}^*$ and $\hat{\rho}$ denote the
constant speed reparametrizations of $\gamma^*$ and $\rho$
respectively. Of course,
$\mathrm{Length}(\hat{\rho})=\mathrm{Length}(\hat{\gamma}^*)$ (by
Lemma \ref{lem:hausdorff equals length}). It follows from Lemma
\ref{lem:arclen-most-efficient} that $\hat{\gamma}^*=\hat{\rho}$ up to
time-reversal. Therefore, since $\hat{\gamma}^*$ is an
order-preserving/reversing reparametrization of $\gamma^*$ (and vice
versa), it follows that $\gamma^*$ is either an order-preserving or
order-reversing reparametrization of $\hat{\rho}$, and hence of
$\rho$.
\end{proof}

\begin{proof}[Proof of Proposition \ref{prop:pseudotimes-consistency}]
  Let $\tilde{\rho}_t$ be the reparametrization of $\rho_t$ guaranteed
  by \cref{thm:seriation-consistency}, and without the loss of
  generality, assume $\tilde{\rho}_t$ is order-preserving. Since the
  inverse map $\tilde \rho_t \mapsto t$ is continuous
  (\cref{prop:inv-cont}) and $\Gamma$ is compact, $\tilde \rho_t
  \mapsto t$ is in fact \emph{uniformly} continuous; that is, for all
  $\epsilon > 0$, there exists $\delta(\epsilon) > 0$ such that
  \[
    \tag{$\forall s,t \in [0,1]$} d(\rho_t, \rho_s) <
    \delta(\epsilon) \implies |t-s| < \epsilon.
  \]
  Fix an $\epsilon_0 > 0$ with $\min_{i,j} d(\rho_i, \rho_j) >
  4\epsilon_0$ and let $\epsilon < \min \{\epsilon_0, \frac{1}{2}
  \delta(\epsilon_0/\mathrm{Length}(\rho))\}$. Again by
  \cref{thm:seriation-consistency}, taking $\beta > 0$ sufficiently
  small yields $\sup_{t\in[0,1]} d(\gamma_t^{*\beta}, \tilde{\rho}_t)
  < \epsilon$.

  For all $i$, let $t_i \in [0,1]$ such that $\tilde{\rho}_{t_i} =
  \rho_i$, and also let $\hat{\tau}_i \in \argmin_{t \in [0,1]}
  d(\gamma_t^{*\beta}, \tilde{\rho}_{t_i})$. It suffices to show that
  for all $i,j$, we have $t_i < t_j$ iff $\hat{\tau}_i <
  \hat{\tau}_j$. To whit, applying the reverse triangle inequality at
  ($a$), ($b$) below yields
  \begin{align*}
    d(\gamma_{t_i}^{*\beta}, \gamma_{t_j}^{*\beta}) \overset{(a)}{\geq}
    |d(\gamma_{t_i}^{*\beta}, \tilde{\rho}_j) - d(\tilde{\rho}_j,
    \gamma_{t_j}^{*\beta})|
    \geq d(\gamma_{t_i}^{*\beta}, \tilde{\rho}_j)
    \overset{(b)}{\geq} |d(\gamma_{t_i}^{*\beta}, \tilde{\rho}_i) -
    d(\tilde{\rho}_i, \tilde{\rho}_j)| \overset{(c)}{\geq}
    3\epsilon_0,
  \end{align*}
  with ($c$) following from $d(\gamma_{t_i}^{*\beta},
  \tilde{\rho}_{t_i}) < \epsilon < \epsilon_0$ and $d(\tilde{\rho}_i,
  \tilde{\rho}_j) > 4\epsilon_0$. Since $\gamma_t^{*\beta}$ is
  parametrized to have constant speed, the overall inequality implies
  $|t_i - t_j| \geq 3\epsilon_0/\mathrm{Length}(\rho)$.

  On the other hand,
  \[
    d(\tilde{\rho}_{\hat{\tau}_i}, \tilde{\rho}_{t_i}) \leq
    d(\tilde{\rho}_{\hat{\tau}_i}, \gamma_{\hat{\tau}_i}^{*\beta}) +
    d(\gamma_{\hat{\tau}_i}^{*\beta}, \tilde{\rho}_{t_i})
    \overset{(d)}{\leq}
    d(\tilde{\rho}_{\hat{\tau}_i}, \gamma_{\hat{\tau}_i}^{*\beta}) +
    d(\gamma_{t_i}^{*\beta}, \tilde{\rho}_{t_i}) \overset{(e)}{<}
    \delta(\epsilon_0/\mathrm{Length}(\rho)),
  \]
  where ($d$) follows from construction of $\hat{\tau}_i$ and ($e$)
  follows from $\sup_t d(\gamma_t^{*\beta}, \tilde{\rho}_t) <
  \epsilon$. By definition of $\delta$, this implies $|\hat{\tau}_i -
  t_i| \leq \epsilon_0/\mathrm{Length}(\rho)$. The symmetric argument
  gives $|t_j - \hat{\tau}_j| < \epsilon_0/\mathrm{Length}(\rho)$ as
  well. Together with the earlier bound $|t_i - t_j| \geq
  3\epsilon_0/\mathrm{Length}(\rho)$ we see $t_i < t_j$ iff
  $\hat{\tau}_i < \hat{\tau}_j$, as desired.
\end{proof}

\section{Extensions for Section \ref{sec:basics}}
\label{appn:practical_model}
\label{sec:extensions}

In this appendix, we discuss some extra modifications one can make to
the principal curve variational problem and its discretization, beyond
what has been explained in the main text.

\subsection{Nonlocal discretization}

Here we consider a nonlocal discretization scheme for the
$\textrm{PPC}$ functional, similar to what was originally proposed in
practice in \cite{hastie1989} (and likewise is the scheme used in the
R principal curves package). In experiments, we have found such
schemes to be useful at the finite-data level since they allow for
stable performance with a greater number of knots given a fixed amount
of data: in other words, they allow us to increase the resolution of
the discretization of the principal curve. Of separate interest, it
has been suggested (see discussion in \cite{hastie1989}) that this
type of nonlocal kernel smoothing scheme might induce a form of
\emph{implicit regularization}, even in the absence of an explicit
length penalty. We have not yet been able to verify this claimed
regularization effect mathematically, but leave this interesting
direction as one for future work.

We consider the following rather general \emph{nonlocal, nonuniform}
smoothing kernel. Let $w:\mathbb{R}_{+}\rightarrow\mathbb{R}_{+}$
be a Borel function which is compactly supported on $[0,1]$. We write
\[
w_{h}(t) \coloneqq \frac{1}{h}w\left(\frac{t}{h}\right)
\]
for $h>0$. Note $w_{h}$ is compactly supported on $[0,h]$. In
practice, we suggest a $w$ of the form $(1-|t|^{p})_+^{q}$ (a.k.a. an
``Epanechnikov kernel''), as is a standard choice for kernel smoothers
used in nonparametric regression. For simplicity, in what follows we
only consider the case where $\Lambda_{N} \coloneqq
\frac{1}{N}\sum_{n=1}^{N}\delta_{x_{n}}$ for points $x_{n}\in X$, and
where $\Lambda_{N}$ converges to some limiting measure $\Lambda$ as
$N\rightarrow\infty$.

Now we consider the nonlocal, discrete objective
\begin{align*}
&\text{PPC}_{w}^{K}(\Lambda_{N}) (\gamma_1, ..., \gamma_K)\\
& \coloneqq \frac{1}{N}\sum_{k=1}^{K}\sum_{j=1}^{K}\sum_{x_{n}\in I_{j}}d^{2}(x_{n},\gamma_{k})\frac{1}{C_{j}}w_{h_{j}}\left(\frac{\widehat{\text{Dist}}(\gamma_{j},\gamma_{k})}{\sum_{i=1}^{K-1}d(\gamma_{i},\gamma_{i+1})}\right)+\beta\sum_{i=1}^{K-1}d(\gamma_{i},\gamma_{i+1})
\end{align*}
where $\widehat{\text{Dist}}(\gamma_{j},\gamma_{k})$ denotes the
arcwise distance from $\gamma_{j}$ to $\gamma_{k}$ along the discrete
curve $\{\gamma_{k}\}_{k=1}^{K}$; in other words,
\[
\widehat{\text{Dist}}(\gamma_{j},\gamma_{k}) \coloneqq \begin{cases}
\sum_{i=j}^{k-1}d(\gamma_{i},\gamma_{i+1}) & j<k\\
\sum_{i=k}^{j-1}d(\gamma_{i},\gamma_{i+1}) & k<j\\
0 & j=k.
\end{cases}
\]
Here $C_{j}$ is a normalization constant that is chosen so that
$\frac{1}{C_{j}}\sum_{k=1}^{K}w_{h_{j}}\left(\frac{\widehat{\text{Dist}}(\gamma_{j},\gamma_{k})}{\sum_{i=1}^{K-1}d(\gamma_{i},\gamma_{i+1})}\right)=1$.
Note that for each $j$ we allow the choice of a different $h_{j}$; for
example, each $h_{j}$ can be chosen adaptively given the data. In what
follows, we write $\bar{h} \coloneqq \max_{j\in[1,K]}h_{j}$.

\begin{prop}
\label{prop:discrete-nonlocal-to-continuum}
Let $X$ be a compact geodesic metric space. Minimizers of the functional
$\text{PPC}_{w}^{K}(\Lambda_{N})$ converge to minimizers of $\text{PPC}(\Lambda)$
as $K,N\rightarrow\infty$ and $\bar{h}\rightarrow0$, in the same
sense as for Theorem \ref{thm:discrete-to-continuum}.
\end{prop}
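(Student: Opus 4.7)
The plan is to mirror the three-step structure of the proof of Theorem \ref{thm:discrete-to-continuum}, reducing almost everything to the local discretization analyzed there via a single pointwise inequality. Because the kernel weights $\tfrac{1}{C_j} w_{h_j}(\cdots)$ sum to $1$ in $k$, for any knots $\{\gamma_k\}_{k=1}^K$ and any $x_n\in I_j$ one has
\[
\sum_{k=1}^K \frac{1}{C_j}\, w_{h_j}\!\left(\frac{\widehat{\text{Dist}}(\gamma_j,\gamma_k)}{\sum_i d(\gamma_i,\gamma_{i+1})}\right) d^2(x_n,\gamma_k) \;\geq\; \min_{1\leq k\leq K} d^2(x_n,\gamma_k) = d^2(x_n,\Gamma^K),
\]
so that $\text{PPC}_w^K(\Lambda_N) \geq \text{PPC}^K(\Lambda_N)$ pointwise on $K$-tuples. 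A matching near-equality holds in the recovery regime: writing $j(n)$ for the Voronoi index of $x_n$, the triangle inequality plus the support constraint $\widehat{\text{Dist}}(\gamma_j,\gamma_k) \leq h_j \cdot \sum_i d(\gamma_i,\gamma_{i+1})$ plus chord-$\leq$-arc bound the excess of the weighted average over $d^2(x_n,\gamma_{j(n)}) = d^2(x_n,\Gamma^K)$ by $2\,\text{Diam}(X)\cdot\bar h\cdot\text{Length}(\gamma^K)$.

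For the liminf direction of the underlying $\Gamma$-convergence, I would take any sequence of knot configurations $\{\gamma_k^K\}_{k=1}^K$ whose constant-speed piecewise-geodesic interpolations $\gamma^K$ converge uniformly to some $\gamma\in\text{AC}([0,1];X)$. The first inequality gives $\text{PPC}_w^K(\Lambda_N)(\{\gamma_k^K\}) \geq \text{PPC}^K(\Lambda_N)(\{\gamma_k^K\})$, and Step 1 of the proof of Theorem \ref{thm:discrete-to-continuum} already establishes that the right-hand side has $\liminf_{K,N\to\infty}$ at least $\text{PPC}(\Lambda)(\gamma)$; no new argument is needed. For the limsup direction I would choose the same recovery sequence as in Step 2 of that proof, namely $\gamma_k^K := \gamma_{k/K}$ for a prescribed constant-speed $\gamma\in\text{AC}([0,1];X)$. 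Here $\text{Length}(\gamma^K)$ stays bounded by $\text{Length}(\gamma)$, so the excess estimate above shows
\[
\text{PPC}_w^K(\Lambda_N)(\{\gamma_k^K\}) - \text{PPC}^K(\Lambda_N)(\{\gamma_k^K\}) \;\leq\; 2\,\text{Diam}(X)\cdot\bar h\cdot\text{Length}(\gamma) \;\longrightarrow\; 0
\]
as $\bar h \to 0$, and combining with Step 2 of Theorem \ref{thm:discrete-to-continuum} yields $\limsup \text{PPC}_w^K(\Lambda_N)(\{\gamma_k^K\}) \leq \text{PPC}(\Lambda)(\gamma)$.

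Finally, for compactness of minimizers I would replay Step 3 of Theorem \ref{thm:discrete-to-continuum} essentially verbatim: testing $\text{PPC}_w^K$ against the degenerate competitor with all $K$ knots placed at a single point gives value $\frac{1}{N}\sum_n d^2(x_n,\gamma_1) \leq \text{Diam}(X)^2$ (the kernel weighting is inert when all knots coincide, so the nonlocal and local objectives agree on this competitor). Therefore any minimizer $\{\gamma_k^{*K}\}$ satisfies $\beta\sum_i d(\gamma_i^{*K},\gamma_{i+1}^{*K}) \leq \text{Diam}(X)^2$, hence its piecewise-geodesic interpolation has length at most $\text{Diam}(X)^2/\beta$ uniformly in $K,N$. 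Arzel\`a--Ascoli then extracts a subsequence converging uniformly to some $\gamma^*\in\text{AC}([0,1];X)$, and chaining the liminf inequality applied to $\gamma^{*K}$ with the limsup inequality applied to an arbitrary competitor $\gamma$ identifies $\gamma^*$ as a minimizer of $\text{PPC}(\Lambda)$.

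The main obstacle I expect is keeping the excess estimate uniform when the adaptive bandwidths $h_j$ vary with $j$ and across the sequence in $K$: passing to $\bar h = \max_j h_j$ in the statement handles dependence within a single configuration, but one must verify that the bound $2\,\text{Diam}(X)\cdot\bar h\cdot\text{Length}(\gamma^K)$ remains small uniformly along the sequence of minimizers, for which the uniform length bound from the compactness step is exactly what is needed. Once this is in place, the nonlocal kernel really acts as a vanishing perturbation of the local Voronoi objective, and the argument for Proposition \ref{prop:discrete-nonlocal-to-continuum} reduces to bookkeeping on top of Theorem \ref{thm:discrete-to-continuum}.
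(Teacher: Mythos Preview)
Your proposal is correct and matches the paper's argument closely: both establish the pointwise sandwich $\text{PPC}^K(\Lambda_N) \leq \text{PPC}_w^K(\Lambda_N) \leq \text{PPC}^K(\Lambda_N) + O(\bar h)$ via exactly the kernel-normalization lower bound and the triangle-inequality-plus-support-constraint upper bound you describe, then defer to Steps 1--3 of Theorem \ref{thm:discrete-to-continuum}. The only omission is the second-order term $(\bar h\cdot\text{Length}(\gamma^K))^2$ arising from $d^2(\gamma_j,\gamma_k)$ in the expansion of $d^2(x_n,\gamma_k)$, which the paper tracks explicitly but which vanishes under the same length control you already invoke.
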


We give a proof of this proposition momentarily.

The nonlocal discrete objective $\text{PPC}_w^K(\Lambda_N)$ can be
used in place of $\text{PPC}^K(\Lambda_N)$ in our coupled Lloyd's
algorithm (Algorithm \ref{alg:coupled-lloyd}); the interpretation is
that when updating the knots $\gamma_k$, we do not just take into
account the Voronoi cell $I_k$ but also give some weight to data
located in Voronoi cells $I_j$ for nearby knots $\gamma_j$. For
completeness, we state explicitly what the algorithm looks like with
this alternate objective.

\begin{algorithm}[H]
  \SetKwComment{Comment}{/* }{ */}
  \caption{Nonlocal Coupled Lloyd's Algorithm for Principal Curves}\label{alg:coupled-lloyd-nonlocal}

  \SetKwInOut{Input}{Input}
  \Input{$\textrm{data }\{x_n\}_{n=1}^N, \textrm{parameters } \beta>0, \{h_k\}_{k=1}^K, \varepsilon>0$}

  \SetKwFunction{Initialize}{initialize\_knots}

  $\{\gamma_k\}_{k=1}^K \gets \Initialize{}$\;

  \Repeat{$\varepsilon\textrm{-convergence}$}{
   $\{\gamma_k\}_{k=1}^K \gets
    \texttt{TSP\_ordering}(\{\gamma_k\}_{k=1}^K)$
    \Comment*[r]{min-length ordering}
   $\{I_k\}_{k=1}^K \gets
    \texttt{compute\_Voronoi\_cells}(\{\gamma_k\}_{k=1}^K)$\;
       $\{\gamma_k\}_{k=1}^K \gets
    \argmin_{\{\gamma'_k\}_{k=1}^K }
    \text{PPC}_{w}^{K}(\Lambda_{N}) (\gamma'_1,  ..., \gamma'_K)$ \texttt{using $I_k$'s from 4}\;
  }

  \KwResult{$\{\gamma_k\}_{k=1}^K$ \Comment*[r]{The updated output knots}}

\end{algorithm}

\begin{proof}[Proof of Proposition \ref{prop:discrete-nonlocal-to-continuum}]
  Our proof strategy is to reduce the convergence of minimizers of the
  nonlocal discrete functional to the same result for the discrete
  functional, which was already established in the proof of Theorem
  \ref{thm:discrete-to-continuum}.

  Let $\{\gamma_1, ..., \gamma_K\}\subset X$. For simplicity,
  denote for the first term in the functional
  $\text{PPC}_{w}^{K}(\Lambda_{N})$,
  \begin{align*}
    S_{N,K} \coloneqq \frac{1}{N}\sum_{k=1}^{K}\sum_{j=1}^{K}\sum_{x_{n}\in I_{j}}d^{2}(x_{n},\gamma_{k})\frac{1}{C_{j}}w_{h_{j}}\left(\frac{\widehat{\text{Dist}}(\gamma_{j},\gamma_{k})}{\sum_{i=1}^{K-1}d(\gamma_{i},\gamma_{i+1})}\right).
  \end{align*}

  \emph{Step 1}. From the fact that $I_{j}$ is the Voronoi cell for
  $\gamma_{j}$, it holds that
  \[
    \sum_{x_{n}\in I_{j}}d^{2}(x_{n},\gamma_{k})\geq\sum_{x_{n}\in I_{j}}d^{2}(x_{n},\gamma_{j})
  \]
  and so
\begin{align*}
 S_{N,K} & \geq\frac{1}{N}\sum_{k=1}^{K}\sum_{j=1}^{K}\sum_{x_{n}\in I_{j}}d^{2}(x_{n},\gamma_{j})\frac{1}{C_{j}}w_{h_{j}}\left(\frac{\widehat{\text{Dist}}(\gamma_{j},\gamma_{k})}{\sum_{i=1}^{K-1}d(\gamma_{i},\gamma_{i+1})}\right)\\
 & =\frac{1}{N}\sum_{j=1}^{K}\sum_{x_{n}\in I_{j}}d^{2}(x_{n},\gamma_{j})\frac{1}{C_{j}}\sum_{k=1}^{K}w_{h_{j}}\left(\frac{\widehat{\text{Dist}}(\gamma_{j},\gamma_{k})}{\sum_{i=1}^{K-1}d(\gamma_{i},\gamma_{i+1})}\right).
\end{align*}
Since we chose $C_{j}$ so that
\[
\frac{1}{C_{j}}\sum_{k=1}^{K}w_{h_{j}}\left(\frac{\widehat{\text{Dist}}(\gamma_{j},\gamma_{k})}{\sum_{i=1}^{K-1}d(\gamma_{i},\gamma_{i+1})}\right)=1,
\]
this ensures that for any $\{\gamma_{k}\}_{k=1}^{K}$ and $\{x_{n}\}_{n=1}^{N}$
atoms in $\Lambda_{N}$, and any $\bar{h}$,

\begin{align*}
 S_{N,K}
+\beta\sum_{i=1}^{K-1}d(\gamma_{i},\gamma_{i+1})
\geq\frac{1}{N}\sum_{k=1}^{K}\sum_{x_{n}\in I_{k}}d^{2}(x_{n},\gamma_{k})+\beta\sum_{i=1}^{K-1}d(\gamma_{i},\gamma_{i+1}).
\end{align*}
Accordingly, we can apply Step 1 from the proof of Theorem
\ref{thm:discrete-to-continuum}, already established, to deduce that,
if $\gamma^{K}$, the piecewise geodesic curve from $\{\gamma_1, ...,
\gamma_K\}$, converges uniformly in $t$ to some limiting AC curve
$\gamma$, then
\begin{multline*}
  \liminf_{N,K\rightarrow\infty;\bar{h}\rightarrow0}\left(
    S_{N,K} +\beta\sum_{i=1}^{K-1}d(\gamma_{i},\gamma_{i+1})\right)\\
  \begin{aligned}
    & \geq\liminf_{N,K\rightarrow\infty}\left(\frac{1}{N}\sum_{k=1}^{K}\sum_{x_{n}\in I_{k}}d^{2}(x_{n},\gamma_{k})+\beta\sum_{i=1}^{K-1}d(\gamma_{i},\gamma_{i+1})\right)\\
    & \geq\int_{X}d^{2}(x,\Gamma)d\Lambda(x)+ \beta \text{Length}(\gamma).
  \end{aligned}
\end{multline*}
(In fact the same inequality holds even if $\bar{h}$ is not sent to
zero; but we do not use this.)

\emph{Step 2}. We observe that
\[
  d^{2}(x_{n},\gamma_{k})\leq
  d^{2}(x_{n},\gamma_{j})+2d(x_{n},\gamma_{j})d(\gamma_{j},\gamma_{k})+d^{2}(\gamma_{j},\gamma_{k}).
\]
By assumption on $X$, we have that
$d(x_{n},\gamma_{j})\leq\text{Diam}(X)$; and, since we have assumed
that $w_{h_j}$ is compactly supported on $[0,h_j]$, either
\[
  \frac{\widehat{\text{Dist}}(\gamma_{j},\gamma_{k})}{\sum_{i=1}^{K-1}d(\gamma_{i},\gamma_{i+1})}<h_{j},\text{ or }w_{h_{j}}\left(\frac{\widehat{\text{Dist}}(\gamma_{j},\gamma_{k})}{\sum_{i=1}^{K-1}d(\gamma_{i},\gamma_{i+1})}\right)=0.
\]
Additionally, we have that
$d(\gamma_{j},\gamma_{k})\leq\widehat{\text{Dist}}(\gamma_{j},\gamma_{k})$
simply from the triangle inequality. Thus, since $h_{j}\leq\bar{h}$,
we have that, for all $j$ and $k$ such that
$w_{h_{j}}\left(\frac{\widehat{\text{Dist}}(\gamma_{j},\gamma_{k})}{\sum_{i=1}^{K-1}d(\gamma_{i},\gamma_{i+1})}\right)>0$,
\begin{align*}
d^{2}(x_{n},\gamma_{k}) & \leq d^{2}(x_{n},\gamma_{j})+2d(x_{n},\gamma_{j})\widehat{\text{Dist}}(\gamma_{j},\gamma_{k})+\left(\widehat{\text{Dist}}(\gamma_{j},\gamma_{k})\right)^{2}\\
 & \leq d^{2}(x_{n},\gamma_{j})+2\text{Diam}(X)\bar{h}\sum_{i=1}^{K-1}d(\gamma_{i},\gamma_{i+1})+\left(\bar{h}\sum_{i=1}^{K-1}d(\gamma_{i},\gamma_{i+1})\right)^{2}.
\end{align*}
Now, let us assume that
$\sum_{i=1}^{K-1}d(\gamma_{i},\gamma_{i+1})\leq\text{Diam}(X)^{2}/\beta$;
for establishing convergence of minimizers, this shall result in no
loss of generality, as we have already seen in previous arguments.
Under this assumption,
\[
d^{2}(x_{n},\gamma_{k})\leq d^{2}(x_{n},\gamma_{j})+2\text{Diam}(X)^{3}\bar{h}/\beta+\bar{h}^{2}\text{Diam}(X)^{4}/\beta^2.
\]
It follows that, for any $\{\gamma_{k}\}_{k=1}^{K}$, $\Lambda_{N}$,
and $\bar{h}$,

\begin{align*}
 S_{N,K} & \leq\frac{1}{N}\sum_{k=1}^{K}\sum_{j=1}^{K}\sum_{x_{n}\in I_{j}}\left(d^{2}(x_{n},\gamma_{j})+ O(\bar h)
\right)\frac{1}{C_{j}}w_{h_{j}}\left(\frac{\widehat{\text{Dist}}(\gamma_{j},\gamma_{k})}{\sum_{i=1}^{K-1}d(\gamma_{i},\gamma_{i+1})}\right)\\
 & =\frac{1}{N}\sum_{j=1}^{K}\sum_{x_{n}\in I_{j}}\left(d^{2}(x_{n},\gamma_{j})+
 O(\bar h)
 \right)\frac{1}{C_{j}}\sum_{k=1}^{K}w_{h_j}\left(\frac{\widehat{\text{Dist}}(\gamma_{j},\gamma_{k})}{\sum_{i=1}^{K-1}d(\gamma_{i},\gamma_{i+1})}\right)\\
 & =\frac{1}{N}\sum_{j=1}^{K}\sum_{x_{n}\in I_{j}}\left(d^{2}(x_{n},\gamma_{j})
 + O(\bar h)
 \right)\\
 & =\left(\frac{1}{N}\sum_{j=1}^{K}\sum_{x_{n}\in I_{j}}d^{2}(x_{n},\gamma_{j})
 \right)+ O(\bar h).
\end{align*}
For the last line, notice that the double summation has only $N$ effective terms.

Consequently,

\begin{multline*}
\limsup_{N,K\rightarrow\infty;\bar{h}\rightarrow0}\left(S_{N,K}+\beta\sum_{i=1}^{K-1}d(\gamma_{i},\gamma_{i+1})\right)\\
\begin{aligned}
&
\leq\limsup_{N,K\rightarrow\infty;\bar{h}\rightarrow0}\left(\frac{1}{N}\sum_{j=1}^{K}\sum_{x_{n}\in I_{j}}\left(d^{2}(x_{n},\gamma_{j})+
O(\bar h)
\right)
+\beta\sum_{i=1}^{K-1}d(\gamma_{i},\gamma_{i+1})\right)\\
 & \leq\limsup_{N,K\rightarrow\infty}\left(\frac{1}{N}\sum_{j=1}^{K}\sum_{x_{n}\in I_{j}}d^{2}(x_{n},\gamma_{j})+\beta\sum_{i=1}^{K-1}d(\gamma_{i},\gamma_{i+1})\right).
\end{aligned}
\end{multline*}
Finally, applying Step 2 from the proof of Theorem
\ref{thm:discrete-to-continuum}, for the specific choice of
$\{\gamma_k\}_{k=1}^K$ from that Step 2, it holds that
\[
\limsup_{N,K\rightarrow \infty}\left(\frac{1}{N}\sum_{j=1}^{K}\sum_{x_{n}\in I_{j}}d^{2}(x_{n},\gamma_{j})+\beta\sum_{i=1}^{K-1}d(\gamma_{i},\gamma_{i+1})\right)\leq\int_{X}d^{2}(x,\Gamma)d\Lambda(x)+\beta\text{Length}(\gamma).
\]

\medskip

\emph{Step 3.} Lastly, the conclusion of the proof is now identical to
Step 3 of the proof of Theorem \ref{thm:discrete-to-continuum}.
\end{proof}

\subsection{Fixed endpoints and semi-supervision}

\subsubsection*{Fixed endpoints} Here we want to produce a curve which
best fits the data (subject to length penalty), but with endpoints
$\bar{\gamma}_{0}$ and $\bar{\gamma}_{1}$ which are specified in
advance. In other words, we solve the modified optimization problem
\[
\min_{\gamma\in AC([0,1];X)}\left\{ \int_{X}d^{2}(x,\Gamma)d\Lambda(x)+\beta\text{Length}(\gamma):\gamma_{0}=\bar{\gamma}_{0},\gamma_{1}=\bar{\gamma}_{1}\right\} .
\]
We note that the set $\{\gamma\in
AC([0,1];X):\gamma_{0}=\bar{\gamma}_{0},\gamma_{1}=\bar{\gamma}_{1}\}$
is closed inside $AC([0,1];X)$, so one can prove existence of
minimizers by an identical argument to the one we provide in the proof
of Proposition \ref{prop:mins-exist}. Likewise, the corresponding
optimization problem for discretized $\gamma$ takes the form
\[
\min_{\gamma_{1},\ldots,\gamma_{K}\in X}\left\{ \int_{X}d^{2}(x,\Gamma^{K})d\Lambda(x)+\beta\sum_{k=1}^{K-1}d(\gamma_{k},\gamma_{k+1}):\gamma_{1}=\bar{\gamma}_{0},\gamma_{K}=\bar{\gamma}_{K}\right\} .
\]
In particular, the Lloyd-type algorithm we provide for minimizing
$\text{PPC}^{K}(\Lambda)$ can be easily modified to consider this
optimization problem; one simply initializes the knots $\gamma_{0}$
and $\gamma_{K}$ appropriately and leaves them fixed at each
iteration. Furthermore this discretization can be shown to converge to
the continuum problem in the sense of Theorem
\ref{thm:discrete-to-continuum} by an identical argument, also.

Likewise, one can also take $\gamma_0$ and $\gamma_K$ as fixed in the
argument of $\text{PPC}_w^K(\Lambda_N)(\cdot)$ and run Algorithm
\ref{alg:coupled-lloyd-nonlocal} with $\gamma_0$ and $\gamma_K$ fixed
throughout. This is a minute modification of Algorithm
\ref{alg:coupled-lloyd-nonlocal}, but since it is this modification
which we use in our experiments we present it explicitly below. Note
that the only changes are that: in line 3, the TSP subroutine should
be understood as keeping $\gamma_0$ and $\gamma_K$ fixed while being
allowed to permute the other indices of the knots; and, in line 5, one
only updates the locations of the knots $\{\gamma_k\}_{k=2}^{K-1}$.

\begin{algorithm}[H]
  \SetKwComment{Comment}{/* }{ */}
  \caption{Nonlocal, Fixed-Endpoint Coupled Lloyd's Algorithm}\label{alg:coupled-lloyd-nonlocal-fixed}

  \SetKwInOut{Input}{Input}
  \Input{$\textrm{data }\{x_n\}_{n=1}^N, \textrm{parameters } \beta>0, \{h_k\}_{k=1}^K, \varepsilon>0$}

  \SetKwFunction{Initialize}{initialize\_knots}

  $\{\gamma_k\}_{k=1}^K \gets \Initialize{}$\;

  \Repeat{$\varepsilon\textrm{-convergence}$}{
   $\{\gamma_k\}_{k=1}^K \gets
    \texttt{TSP\_ordering}(\{\gamma_k\}_{k=1}^K)$
    \Comment*[r]{min-length ordering}
   $\{I_k\}_{k=1}^K \gets
    \texttt{compute\_Voronoi\_cells}(\{\gamma_k\}_{k=1}^K)$\;
       $\{\gamma_k\}_{k=1}^K \gets
    \argmin_{\{\gamma'_k\}_{k=2}^{K-1} }
    \text{PPC}_{w}^{K}(\Lambda_{N}) (\gamma'_1, ..., \gamma'_K)$ \texttt{using $I_k$'s from 4}\;
  }

  \KwResult{$\{\gamma_k\}_{k=1}^K$ \Comment*[r]{The updated output knots}}

\end{algorithm}

Statistically, one can think of this modified optimization problem
as follows. Let the distribution $\Lambda$ describe a noisy observation
of a ground truth curve $\rho_{t}$ which we are trying to infer.
If an oracle has told us the locations of $\rho_{0}$ and $\rho_{1}$,
but the other temporal values of $\rho_{(\cdot)}$ are unlabeled,
then we can estimate $\rho_{t}$ up to time-reparametrization by solving
the optimization problem above with $\rho_{0}=\bar{\gamma}_{0}$ and
$\rho_{1}=\bar{\gamma}_{1}$. This \emph{semi-supervised} estimation
of $\rho_{t}$ is especially important in the application to the seriation
problem we consider in Section~\ref{sec:seriation}, as the initial and
terminal temporal labels are sufficient to provide identifiability
of the temporal ordering in the sense of Proposition \ref{prop:pseudotimes-consistency}.

\subsubsection*{Semi-supervision} More generally, subsets of $AC([0,1];X)$ of the form
\[
\{\gamma\in AC([0,1];X):\forall j=1,\ldots,J,\gamma_{t_{j}}=\bar{\gamma}_{t_{j}}\}
\]
for finitely many fixed points $\bar{\gamma}_{t_{j}}$, are closed
inside $AC([0,1];X)$. In particular, if we consider the semi-supervised
problem where an oracle specifies the ground truth $\rho_{t}$ for
finitely many time points, then the corresponding PPC-type optimization
problem is given by
\[
\min_{\gamma\in AC([0,1];X)}\left\{ \int_{X}d^{2}(x,\Gamma)d\Lambda(x)+\beta\text{Length}(\gamma):\forall j=1,\ldots J,\gamma_{t_{j}}=\bar{\gamma}_{t_{j}}\right\} .
\]
This type of semi-supervised inference problem makes sense in the
setting of the seriation problem, if temporal labels are accessible in
principle but substantially more costly than unlabeled data. Indeed
many existing trajectory inference methods for single-cell omics data
allow for semi-supervised specification of some of the data
\cite{saelens2019comparison}.

We can also impose this semi-supervision at the discrete level:
suppose for simplicity that the times $t_j$ are rational and that $K$
is sufficiently large that for each $j$, $t_j=k_j/K$ for some $1\leq
k_j \leq K$. Then,
\[
  \min_{\gamma_{1},\ldots,\gamma_{K}\in X}\left\{ \int_{X}d^{2}(x,\Gamma^{K})d\Lambda(x)+\beta\sum_{k=1}^{K-1}d(\gamma_{k},\gamma_{k+1}):\forall j=1,\ldots,J,\gamma_{k_j}=\bar{\gamma}_{k_j},\right\}
\]
is a discrete objective with $J$ many fixed midpoints, and its
consistency with the continuum objective with fixed midpoints can be
proved along identical lines to the proof of Theorem
\ref{thm:discrete-to-continuum}. Likewise one can employ the nonlocal
discrete objective $\text{PPC}_w^K$ with fixed midpoints in an
identical fashion, and modify Algorithm
\ref{alg:coupled-lloyd-nonlocal-fixed} so that the TSP subroutine in
Step 2 is instead constrained to hit the intermediate fixed knots
$\bar{\gamma}_{k_j}$ at the specified intermediate indices.

\section{Additional details for experiments}
\label{sec:extra-experiments}

This Appendix provides additional details for the experiments we
describe in Section \ref{sec:experiments} of the main text.

\paragraph*{Description of prior seriation methods}

As mentioned in the main text, we compare our approach to seriation
based on principal curves with two existing seriation methods:
seriation based on the \emph{Traveling Salesman Problem} (TSP)
\cite{LAPORTE1978259_tsp_seriation}, and spectral seriation
\cite{atkins_spectral_1998, spectral_seriation_fogel_alexandre,
  natik2021consistencyspectralseriation}. Both approaches shall take
as input the matrix $W=[W_2(\hat\rho_{t_i}, \hat\rho_{t_j})]_{i,j}$ of
pairwise $W_2$ distances between empirical measures $\hat\rho_{t_i}$.

For TSP, we regard $W$ as a matrix of edge weights in a complete graph
of time points. The TSP approach to seriation is to visit all nodes
exactly once on a distance-minimizing path, and use the ordering given
by said minimizing path. This method optionally allows the user to
specify fixed initial and terminal nodes. The Traveling Salesman
Problem can be solved exactly for up to thousands of nodes using
solvers such as Concorde \cite{applegate2006traveling_concorde}.

For spectral seriation, we create a similarity matrix $A$ using a
Gaussian kernel with bandwidth $\sigma$:
\begin{equation*}
    A = \exp(-W^2/\sigma^2).
\end{equation*}
We then form the normalized Laplacian $L=D^{\frac12}AD^{-\frac12}$, where
\[
  D =
  \begin{bmatrix}
    \sum_{j=1}^N A_{1, j} & & \\
    & \ddots & \\
    & & \sum_{j=1}^N A_{N, j}
  \end{bmatrix}.
\]
The Fiedler eigenvector of $L$ then imposes a seriation \cite{atkins_spectral_1998}.

Solvers exist which allow us to solve eigenvector problems for large
matrices \cite{bientinesi_parallel_2005, marek_elpa_2014}, allowing us
also to apply spectral seriation alongside principle curves.

\paragraph*{Parameter selection}
For our experiments described in Section \ref{sec:experiments}, we use
the version of the principal curves objective described in Appendix
\ref{appn:practical_model} with a nonlocal kernel and fixed endpoints.
This objective $\textrm{PPC}^K_w$ depends on parameters $h$ and
$\beta$, which must be chosen by the user. Additionally, spectral
seriation depends on a kernel bandwidth parameter $\sigma$, which
likewise must be chosen by the user. To evaluate both methods fairly,
we performed a series of parameter sweeps on training data, to find
optimal parameters for both methods.

For the principal curves method, two sweeps were performed for each
dataset. The first sweep covers $h, \beta \in [0.01, 0.5]$ on a
$10\times10$ grid. Performance is taken to be the average of the
Kendall Tau error over 6 repeats. The second sweep was taken on a
narrower range with $h, \beta \in [0.01, 0.25]$. These sweeps found
that the optimal $h=0.037$ and $\beta=0.17$ for the simple branching
curve (Test Dataset 1), while the optimal $h=0.01$, and $\beta=0.037$
for the bent curve (Test Dataset 2). A summary plot of performance can
be see in Figure \ref{fig:simple_branching_sweep} and Figure
\ref{fig:bent_sweep}.

\begin{figure}[H]
    \centering
    \includegraphics[width=\linewidth]{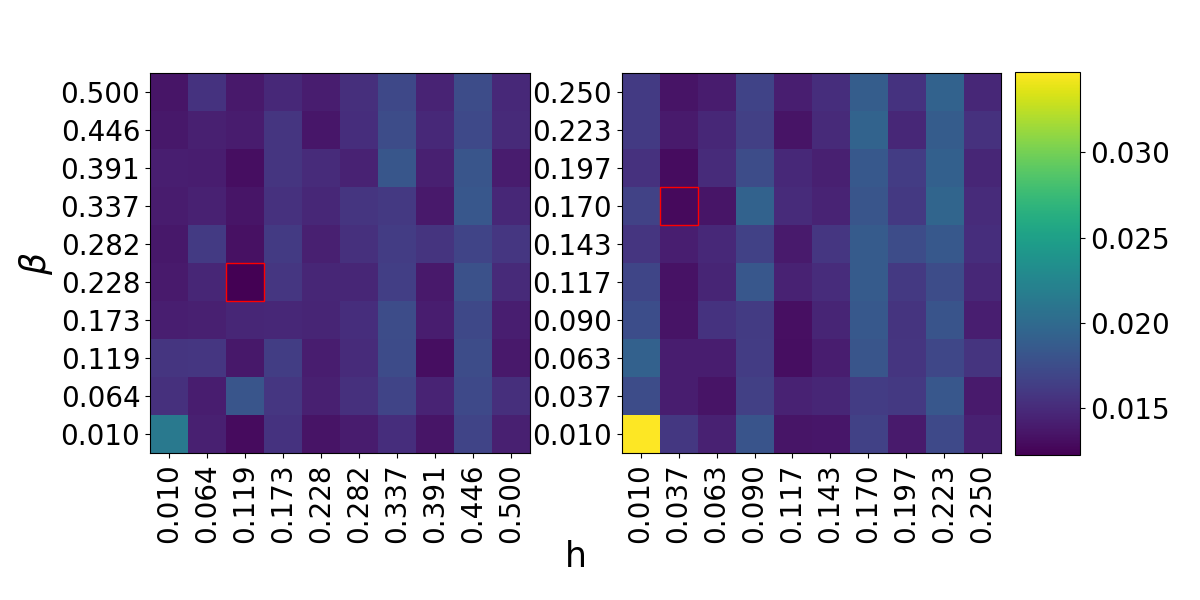}
    \caption{A set of parameter sweeps on Test Dataset 1 generated
      with $250$ time points, $10000$ total atoms, and a variance of
      $0.01$. Values are given as an average of 6 repeats. The optimal
      value for each run is highlighted with a red rectangle.}
    \label{fig:simple_branching_sweep}
\end{figure}

\begin{figure}[H]
    \centering
    \includegraphics[width=\linewidth]{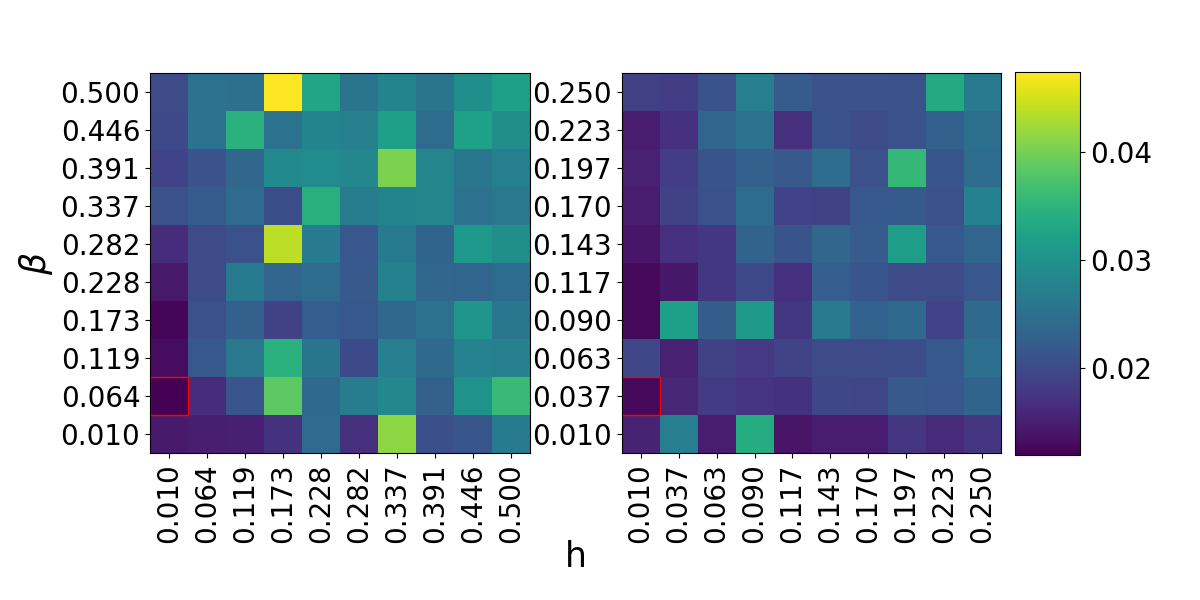}
    \caption{A set of parameter sweeps on Test Dataset 2 generated
      with $250$ time points, $10000$ total atoms, and a variance of
      $0.01$. Values are given as an average of 6 repeats. The optimal
      value for each run is highlighted with a red rectangle.}
    \label{fig:bent_sweep}
\end{figure}

For spectral seriation, we ran one sweep per dataset covering a range
of $\sigma \in [0, 3]$ with $1000$ equally spaced values. The optimal
value $\sigma$ was found to be $0.5$ for Test Dataset 1 and $0.315$
for Test Dataset 2. Summary plots of these sweeps can be seen in
Figure \ref{fig:spectral_sweep_summary}.

\begin{figure}[H]
    \centering
    \includegraphics[width=\linewidth]{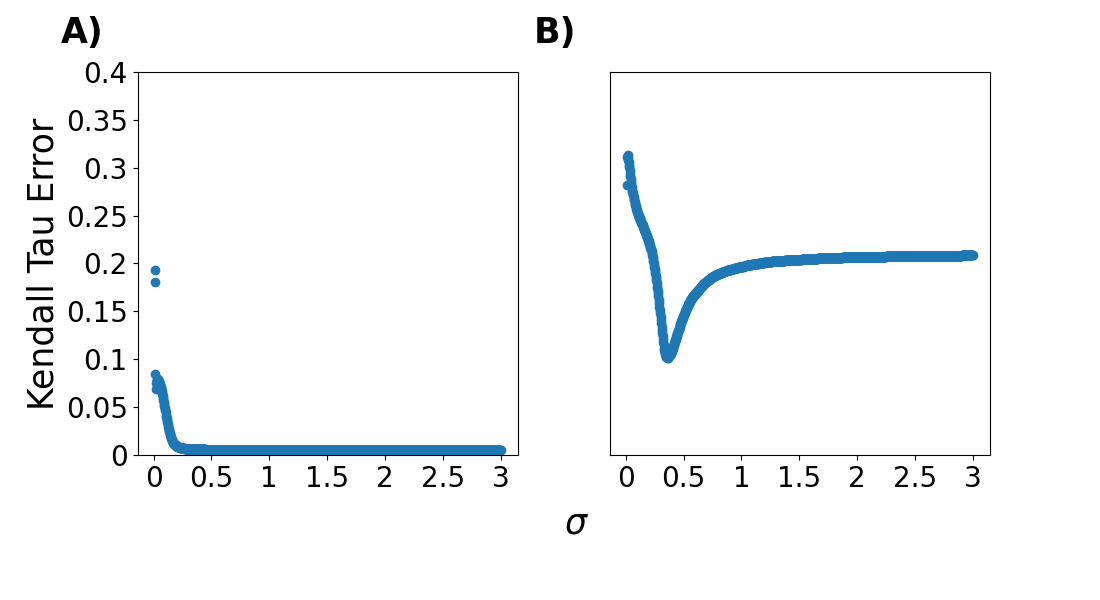}
    \caption{A sweep of kernel bandwidths for spectral seriation for Test Dataset 1 (A) and the Test Dataset 2 (B). Reported error is taken as the average of $6$ repeats.}
    \label{fig:spectral_sweep_summary}
\end{figure}

\end{appendix}

\end{document}